\newtheoremstyle{customremark}
{3pt}
{3pt}
{}
{}
{\bfseries}
{.}
{1ex}
{}
\theoremstyle{plain}
\newtheorem{thm}{Theorem}
\newtheorem{qthm}{Theorem}
\newtheorem{lem}{Lemma}
\newtheorem{cor}{Corollary}
\newtheorem{prop}{Proposition}
\theoremstyle{customremark}
\newtheorem{rem}{Remark}
\theoremstyle{remark}
\newtheorem*{ack}{Acknowledgement}
\newcommand{\pth}{}
\newcommand{\nmc}{\mathcal}
\df\eqd{\overset{\mathrm{def}}{=}}
\df\card{\mathrm{card}}
\df\bds{\nmc{K}}
\df\lgt{\ell}
\df\R{\mathbb{R}}
\df\E{\mathbb{E}}
\df\Z{\mathbb{Z}}
\df\IP{\mathbb{P}}
\df\diam{\mathrm{diam}}
\df\conv{\mathrm{conv}}
\df\rmand{\text{and}}
\df\rmor{\text{or}}
\df\inn{\mathrm{int}}
\df\bd{\partial}
\df\B{\nmc{N}}
\df\M{\nmc{M}}
\df\MS{\nmc{M}^{S}}
\df\C{\nmc{C}}
\df\Diam{\nmc{D}}
\df\sp{\nmc{L}}
\df\Bno{\widetilde{\nmc{N}}}
\df\BF{\nmc{F}}
\df\Bo{\mathbb{B}}
\df\Bc{\bar{\mathbb{B}}}
\df\IS {\mathbb{S}}
\df\TUS{T^1}
\begin{document}
\author{Alain Rivi\`{e}re}
\author{Jo\"{e}l Rouyer}
\author{Costin V\^\i lcu}
\author{Tudor Zamfirescu}
\title{Double normals of most convex bodies}
\date{\today}

\begin{abstract}
We consider a typical (in the sense of Baire categories) convex body $K$ in
$\R^{d+1}$. The set of feet of its double normals is a Cantor set, having
lower box-counting dimension $0$ and packing dimension $d$. The set of lengths
of those double normals is also a Cantor set of lower box-counting dimension
$0$. Its packing dimension is equal to $\frac{1}{2}$ if $d=1$, is at least
$\frac{3}{4}$ if $d=2$, and equals $1$ if $d\geq3$. We also consider the lower
and upper curvatures at feet of double normals of $K$, with a special interest
for local maxima of the length function (they are countable and dense in the
set of double normals). In particular, we improve a previous result about the
metric diameter.

\end{abstract}
\maketitle

\textbf{MSC 2010:} 52A20, 54E52, 28A78, 28A80

\textbf{Key words:} double normals of a convex body, Baire categories, box
dimension, packing dimension, upper and lower curvatures


\section{Introduction and results}

Let $\E$ be the Euclidean space of dimension $d+1$, with $d\geq1$, and let
$\bds$ be the set of all convex bodies (\ie, compact convex sets with
non-empty interior) in $\E$. For $K\in\bds$, a \emph{chord} is a line segment
$xy$ joining boundary points $x$ and $y$ of $K$. A chord $xy$ is called a
\emph{normal} of $K$ if it is orthogonal to some supporting hyperplane at the
point $x$ called \emph{foot}. An \textit{affine diameter} is a chord with
parallel supporting hyperplanes at its endpoints, while a \textit{double
normal} is an affine diameter orthogonal to those supporting hyperplanes.
Thus, a double normal is a normal with two feet. In this paper, $\B(K)$ stands
for the set of (oriented) double normals of $K$, $\lgt(c)$ denotes the length
of an oriented chord $c$, and $\sp(K)=\set(:\lgt(b)|b\in\B(K):)$.

It is well-known that every normal to a convex body $K$ is a double normal if
and only if $K$ has constant width. On the other hand, the shortest and the
longest affine diameter are double normals, but are there others?

Answering a question proposed by V. Klee \cite{Klee}, N. N. Kuiper proved in
1964 that every convex body in $\E$ has at least $d+1$ non-oriented double
normals \cite{Kui}. Moreover, for any $\mathcal{C}^{2-}$-function
$f:\IP^{d}\rightarrow\R$ there exists a symmetric convex body $K$ in $\E$ with
centre $0$, for which the set of double normals directions coincides to the
critical set $\set(:z\in\IP^{d}|(df)_{z}=0:)$ of $f$. Conversely, for any
convex body $K$ in $\E$ there exists a centrally symmetric convex body
$K^{\prime}$ with $\mathcal{C}^{2-}$-boundary and a $\mathcal{C}^{2-}%
$-function $f:\IP^{d}\rightarrow\R$ whose critical set coincides to the set
double normal directions of $K$, and of $K^{\prime}$. Here $\mathcal{C}^{2-}$
stands for a class of regularity between $\mathcal{C}^{1}$ and $\mathcal{C}%
^{2}$. More important for our paper, he also proved the following result.

\begin{qthm}
\label{qthm:Kuiper}\cite{Kui} If $d\leq2$, $\sp(K)$ has measure $0$, while for
$d\geq3$ there exists a $\mathcal{C}^{2-}$ centrally symmetric strictly convex
body $K^{\star}$ in $\E$ and a (non-rectifiable) arc $\gamma:[0,1]\rightarrow
\B(K^{\star})$ such that $\sp(K^{\star})=\set(:\lgt(\gamma\left(  t\right)
)|t\in\lbrack0,1]:)$ is a non-degenerate interval.
\end{qthm}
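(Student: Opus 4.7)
The plan is to treat the two parts separately, since they call for quite different techniques.

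For part (i) (the measure-zero statement when $d\leq 2$), the strategy is to exhibit $\sp(K)$ as a set of critical values of a smooth enough map. First I would work under the temporary assumption that $\bd K$ is $\mathcal{C}^{k}$, and consider the squared-distance map $F : \bd K \times \bd K \setminus \Delta \to \R$, $F(x,y)=|x-y|^{2}$, on the $2d$-dimensional off-diagonal boundary-boundary manifold. A direct computation shows that the critical points of $F$ are precisely the pairs of feet of double normals of $K$, so $\sp(K)$ is contained in $F(\mathrm{Crit}(F))$. Sard's theorem gives measure zero for this image provided $k \geq 2d$, i.e.\ $\mathcal{C}^{2}$ in dimension $d=1$ and $\mathcal{C}^{4}$ in dimension $d=2$. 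To cover a general (possibly non-smooth) convex body I would approximate $K$ in Hausdorff distance by $\mathcal{C}^{\infty}$ convex bodies $K_{n}$, show that the set-valued map $K \mapsto \sp(K)$ is upper semicontinuous, and then deduce that $\sp(K)$ is contained in a $\limsup$ of Lebesgue-null sets and is itself null.

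For part (ii), the plan is to carry out a Whitney-type counterexample to Sard's theorem inside the convex-body setting. Using the correspondence recalled immediately before the theorem (between $\mathcal{C}^{2-}$ centrally symmetric strictly convex bodies and $\mathcal{C}^{2-}$ functions $f : \IP^{d} \to \R$ whose critical sets encode the double-normal directions), it suffices to construct an $f$ in class $\mathcal{C}^{2-}$ whose critical set supports a continuous arc $\gamma$ along which $f$ attains every value of a non-degenerate interval; the double normals of the associated body $K^{\star}$ will then inherit this arc structure, and $\lgt\circ\gamma$ will sweep out a true interval. Classical Whitney produces such an arc in $\mathcal{C}^{1}$ regularity starting in dimension $d\geq 2$; improving the regularity to $\mathcal{C}^{2-}$ costs ambient dimensions and is feasible exactly from $d=3$ onwards. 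Concretely, I would embed a Cantor-like arc in $\IP^{d}$, prescribe $f$ to vanish to sufficient order along a family of hypersurfaces transverse to the arc, and arrange the values of $f$ on the arc itself to cover an interval.

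The main obstacle is the delicate quantitative estimate behind the Whitney-type construction in $\mathcal{C}^{2-}$ regularity: the gradient of $f$ must vanish along a topologically non-trivial arc on which $f$ is nevertheless not constant, while maintaining a uniform modulus of continuity on the derivatives. A secondary obstacle is the return trip from $f$ to an actual convex body $K^{\star}$; one must check that the perturbation of the round sphere by $f$ preserves strict convexity, central symmetry, and the prescribed $\mathcal{C}^{2-}$ boundary regularity. The non-rectifiability of $\gamma$ then arises naturally from the Cantor-like combinatorics used to build the arc, which forces its length to be infinite in every neighbourhood where the construction refines.
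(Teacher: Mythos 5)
This is a cited result (Theorem~A, attributed to Kuiper~\cite{Kui}); the paper quotes it as background and does not supply a proof, so there is no proof in the paper to compare against. Judging your proposal on its own terms, the general architecture (Sard for $d\leq 2$, a Whitney-type counterexample for $d\geq 3$) is indeed the right framework, and it matches Kuiper's actual strategy at the level of slogans. But there are two real problems in part~(i).

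First, the reduction to the smooth case by approximation does not go through. Upper semicontinuity of $K\mapsto\B(K)$ gives the inclusion $\limsup_n\B(K_n)\subset\B(K)$ when $K_n\to K$, which is the \emph{wrong} direction for your purposes: you need $\sp(K)$ to be contained in something built from the $\sp(K_n)$, i.e.\ lower semicontinuity, and that fails badly. A ball has a continuum of double normals, while approximating standard polytopes have finitely many; more to the point, the bodies studied in this very paper have $\sp(K)$ a Cantor set (uncountable), while generic smooth approximants have $\sp(K_n)$ finite, so no sequence of smooth approximants can ``cover'' $\sp(K)$ in the sense you need. Second, even for smooth $K$, the choice of functional is suboptimal: the squared-distance map on $\bd K\times\bd K$ lives on a $2d$-manifold, and Sard then demands $\mathcal{C}^{2d}$ regularity ($\mathcal{C}^{4}$ already at $d=2$), which a general convex body (even a generic one, by Theorem~B) does not have. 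The natural object is the width function $w_K:\IP^{d}\to\R$ (distance between parallel supporting hyperplanes), whose critical points give double-normal directions and whose critical values give $\sp(K)$. This lives on a $d$-manifold and is automatically of class $\mathcal{C}^{1,1}$ wherever the relevant supporting points are unique, so Sard's theorem in the sharp $\mathcal{C}^{n-1,1}$ form applies for $n=d\leq 2$ with no smoothness hypothesis on $\bd K$ and no approximation argument. That is the right reduction and is what makes the $d\leq 2$ threshold appear.

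For part~(ii), you correctly identify the Whitney counterexample as the core input and correctly explain why the construction only becomes available at $d=3$. As written, though, this is a sketch of a sketch: the $\mathcal{C}^{2-}$ modulus, the explicit arc in $\IP^{d}$, the verification that the resulting support-type function yields a bona fide strictly convex $K^{\star}$, and the non-rectifiability are all asserted rather than argued. That is acceptable as a plan, but note that the crux is exactly the quantitative balance you flag as ``the main obstacle'', and one should not underestimate how much of Kuiper's paper is devoted to it.
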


Two years later, A. S. Besicovitch and T. Zamfirescu \cite{BZ} proved the
existence of a planar convex body $K$ with an interior point $x$ such that
$\sp\left(  K\right)  $ and the set of ratios in which $x$ divides affine
diameters through it are uncountable. Their construction provides convex
curves whose set of double normals is homeomorphic to any chosen compact
subset of $\R$.

Recently, J. P. Moreno and A. Seeger devoted Sections 4 and 5 in \cite{MoSe}
to the study of double normals. They prove, among other results, that $\sp(K)$
is finite for any full dimensional polytope $K$ in $\E$ (compare to our Lemma
\ref{lem:std-finite}). \medskip{}

Kuiper's results are closely related to billiards. Indeed, on a convex
billiard table, $2$-periodic trajectories correspond to double normals. A
classical result of G. Birkhoff \cite{bf} states that in any planar billiard
table $K$ there always exist trajectories of period $n$, for any integer
$n\geq2$.

The set $\mathcal{B}$ of strictly convex planar sets, having a $\mathcal{C}%
^{r}$ boundary (for some $r\geq2$) with positive curvature everywhere, endowed
with a suitable metric, is a Baire space.

M. J. Dias Carneiro, S. Olfison Kamphorst and S. Pinto de Carvalho \cite{DOP}
proved that for most billiard tables $K\in\mathcal{B}$, for every integer
$n\geq2$, there are at most finitely many $n$-periodic trajectories; in
particular, $\B\left(  K\right)  $ and thus $\sp(K)$ are finite.\textsl{ }For
results in similar directions, see \cite{BMR}, \cite{DOP-I}, \cite{DOP},
\cite{KoC}, \cite{Koz}, \cite{Koz-d}, \cite{PeSto}, \cite{XZ}.

\medskip The problem of counting double normals extends beyond convexity, to
the framework of Riemannian manifolds, see for instance \cite{Hay},
\cite{Rie}, \cite{TaW}.

\medskip

In this paper we study double normals from the point of view of Baire
categories. Our results strongly contrast the abovementioned ones on the
finiteness of the sets of double normals.

\medskip

The next fundamental fact, independently discovered by V. Klee \cite{Klee-B}
and P. Gruber \cite{Grub-B}, is essential for our topic.

\begin{qthm}
\label{qthm:Klee-Gruber}\cit{Grub-B}, \cit{Klee-B} The boundary of most
$K\in\bds$ is of differentiability class $\mathcal{C}^{1}\setminus
\mathcal{C}^{2}$ and strictly convex.
\end{qthm}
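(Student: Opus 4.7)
The plan is to work in $(\bds, d_H)$, the set of convex bodies in $\E$ endowed with the Hausdorff distance, which is a complete metric space and therefore a Baire space. I would decompose the desired conclusion into three genericity statements: (i) strict convexity, (ii) $\mathcal{C}^1$-smoothness of the boundary, and (iii) failure of $\mathcal{C}^2$-smoothness. Since a countable intersection of residual sets is residual, it suffices to show that each bad set is meager, and in each case I would write it as a countable union of closed sets indexed by a quantitative parameter and verify that each of them is nowhere dense.

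For (i), let $S_n$ be the set of $K\in\bds$ whose boundary contains a segment of length at least $1/n$. An extraction argument shows $S_n$ is Hausdorff-closed. To see that $S_n$ is nowhere dense, given any $K\in\bds$ and $\epsilon>0$ one can slice off thin caps near each long boundary segment, producing a body $K'$ with $d_H(K,K')<\epsilon$ having no segment of length $\geq 1/n$. For (ii), let $U_n$ consist of bodies possessing a boundary point whose normal cone has angular diameter at least $1/n$; this set is closed, and any $K$ can be approximated by the Minkowski sums $K+\epsilon \Bo$, which have $\mathcal{C}^{1,1}$ boundary and hence avoid $U_n$ for every $n$. Thus the strictly convex bodies and those with $\mathcal{C}^1$ boundary are both residual.

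The main obstacle is the meagreness in (iii). I would introduce the set $T_n$ of all $K\in\bds$ such that at every point $x\in\bd K$ there exists a closed ball of radius $1/n$ contained in $K$ and tangent to $\bd K$ at $x$. Since $\bd K$ is compact, a $\mathcal{C}^2$-body has uniformly bounded principal curvatures, hence lies in some $T_n$; so $\mathcal{C}^2$-bodies are contained in $\bigcup_n T_n$. The set $T_n$ is Hausdorff-closed by a direct limit-of-tangent-balls argument. It is nowhere dense because, given $K\in T_n$ and $\epsilon>0$, one may pick $r<1/n$ and form $K'=\conv(K\cup B)$, where $B$ is a ball of radius $r$ centred just outside $K$ near a prescribed boundary point, tuned so that $d_H(K,K')<\epsilon$. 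In a neighbourhood of any point $x$ lying on the spherical portion of $\bd K'$, the boundary coincides with $\bd B$, so no ball of radius greater than $r$ can be contained in $K'$ and tangent at $x$; therefore $K'\notin T_n$.

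Combining the three facts, the set of $K\in\bds$ which are strictly convex, have $\mathcal{C}^1$ boundary, and do not have $\mathcal{C}^2$ boundary, is the intersection of three residual sets, and is therefore residual. I expect the delicate point to be the last perturbation: one must make sure the added spherical cap is genuinely exposed on $\bd K'$ (so that the local radius of curvature there equals $r$) while keeping the Hausdorff perturbation below $\epsilon$, which is arranged by choosing the centre of $B$ close enough to $\bd K$ in the inward normal direction.
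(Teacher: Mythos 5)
This is a cited theorem: the paper states it with references to Klee and Gruber and gives no proof of its own, so there is no in-paper argument to compare yours against. Your sketch is nonetheless a reasonable reconstruction of how such a result is established, and the strategy of intersecting three residual sets, each obtained by writing the bad set as a countable union of closed nowhere-dense sets, is exactly the right one.

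Two points need repair, though. In (i) the proposed density argument fails as stated: ``slicing off thin caps near each long boundary segment'' with hyperplanes creates new flat facets and hence new segments, and the segments of length $\geq 1/n$ need not be finite in number or disjoint. The standard remedy is to invoke the density in $(\bds,d_{PH})$ of convex bodies with $\mathcal{C}^{\infty}$ boundary and everywhere positive Gaussian curvature (obtained, for instance, by mollifying the support function on the sphere and Minkowski-adding a small ball); such bodies are simultaneously strictly convex and $\mathcal{C}^{1}$ and hence avoid every $S_n$ and every $U_n$ at once, which disposes of (i) and (ii) together. In (iii), the inclusion of the $\mathcal{C}^{2}$ bodies in $\bigcup_n T_n$ is precisely the inner Blaschke rolling theorem (principal curvatures $\leq 1/r$ imply a ball of radius $r$ rolls freely inside $K$). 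That is easy in the plane but genuinely nontrivial for $d\geq 2$, so it should be cited rather than treated as immediate. Apart from these, the argument is sound: $T_n$ is closed by the extraction argument you indicate, and at the pole of the exposed spherical cap of $B$ in $K'=\conv(K\cup B)$ the boundary $\partial K'$ locally coincides with a sphere of radius $r<1/n$, so no inscribed ball of radius $\geq 1/n$ can be tangent there, which gives $K'\notin T_n$ and hence that $T_n$ is nowhere dense.
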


Our work is also related to the articles \cite{BaZ}, \cite{Zamfi-mirr},
\cite{Zamfi-norm}, \cite{TZ-diam}, which focus on intersections of infinitely
many affine diameters or normals for typical convex bodies. Let us mention
here that, for $d\geq2$, double normals of a typical convex body are pairwise
disjoint \cite{RV}. For other Baire categories results about convex bodies,
see e.g. the survey \cite{Zamfi-surv}.

\medskip

We prove in this paper the following results.

For most $K\in\bds$, the set of feet of double normals is a Cantor set (\ie, a
set homeomorphic to the standard Cantor set) having lower box-counting
dimension $0$ and packing dimension $d$ (Theorem \ref{thm:cantor} in Section
\ref{sec:Cantor}, and Theorems \ref{T5}--\ref{thm:UPD-bin} in Section
\ref{sec:dim}). Recall that the lower box-counting dimension is greater than
or equal to the Hausdorff dimension and the upper box-counting dimension is
greater than or equal to the packing dimension, so these results provide the
typical Hausdorff and the upper box-counting dimension as well.

Double normals are related to the critical points of $\lgt _{K}$, see Lemmas
\ref{lem:locmax} and \ref{lem:bin-crit}.

For most $K\in\bds$, the map $\lgt_{K}$, which associates to a double normal
of $K$ its length, is injective and Lipschitz continuous. Thus, $\sp(K)$ is a
Cantor set and has lower box-counting dimension $0$. In particular, its
Lebesgue measure vanishes, though the function $\lgt_{K}$ does not satisfy the
hypotheses of regularity of Sard's theorem. For most $K\in\bds$, the packing
dimension of $\sp(K)$ is equal to $\frac{1}{2}$ if $d=1$, is at least
$\frac{3}{4}$ if $d=2$, and equals $1$ if $d\geq3$ (Theorems \ref{thm:inj}%
--\ref{thm:UPD-values} and Corollary \ref{cor:LPDvalue} in Section
\ref{sec:crit-val}).

Again, for most $K\in\bds$, the set of maximizing chords (local maxima of the
length function) is countable and dense in $\B(K)$ (Propositions
\ref{prp:countable}--\ref{prp:dense} in Section \ref{sec:KoCP}).

The last author considered in \cite{Zamfi-curv I}, \cite{Zamfi-curv II},
\cite{Zamfi-curv III} the lower and upper curvatures $\lc\tau$ and $\uc\tau$
and proved, among other results, the following fact.

\begin{qthm}
\label{qthm:TZ}For most $K\in\bds$, at each point $x\in\bd K$, $\lc\tau(x)=0$
or $\uc\tau(x)=\infty$ for any tangent direction $\tau$ at $x$, and both
equalities hold at most points.
\end{qthm}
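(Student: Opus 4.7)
The plan is to apply Baire's theorem in $(\bds,d_H)$, the space of convex bodies with the Hausdorff distance. For positive rationals $p<q$, set
\[
\mathcal{F}_{p,q}=\{K\in\bds : \exists x\in\bd K\text{ and tangent direction }\tau,\ p\leq\lc\tau(x)\text{ and }\uc\tau(x)\leq q\}.
\]
The complement of the first assertion of the theorem is $\bigcup_{p<q\in\mathbb{Q}}\mathcal{F}_{p,q}$, so I would show each $\mathcal{F}_{p,q}$ is closed and nowhere dense. Closedness is a compactness–semicontinuity argument: given $K_n\to K$ with witnesses $(x_n,\tau_n)$, pass to a subsequence with $x_n\to x_0\in\bd K$ and $\tau_n\to\tau_0$ tangent at $x_0$, and transfer the inequalities to $(K,x_0,\tau_0)$ using the cap-comparison formulations of $\lc{}$ and $\uc{}$ under Hausdorff convergence.

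To show $\mathcal{F}_{p,q}$ has empty interior I would exhibit a dense family lying in its complement. Given $K$ and $\epsilon>0$, approximate $K$ within $\epsilon$ by a convex body $K'$ whose boundary is built locally from spherical caps of radii alternating between $r\ll 1/q$ and $R\gg 1/p$; near each boundary point of $K'$ and each tangent direction, the local geometry then forces either $\uc\tau>q$ (by the sharp caps) or $\lc\tau<p$ (by the flat ones). Polytopes, where at every boundary point and every tangent direction one of $\lc\tau=0$ or $\uc\tau=\infty$ holds, provide a cleaner, already dense family of the same kind. Baire's theorem then yields the first assertion on a residual set $\mathcal{R}\subset\bds$.

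For the second assertion I would run Baire inside the compact metric space $\bd K$, for fixed $K\in\mathcal{R}$. Setting
\[
A_n(K)=\{x\in\bd K : \exists\tau,\ \lc\tau(x)\geq 1/n\text{ or }\uc\tau(x)\leq n\},
\]
each $A_n(K)$ is closed in $\bd K$, and the goal is that $\bigcup_n A_n(K)$ be meager in $\bd K$. This reduces to showing that $\{K\in\bds : A_n(K)\text{ contains a non-empty relative open piece of }\bd K\}$ is meager in $\bds$, again via a local surgery that, inside any prescribed open portion of $\bd K$, inserts points of extreme curvature. Intersecting this residual set with $\mathcal{R}$ produces the typical $K$ for which both equalities hold outside a first-category subset of $\bd K$.

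The main obstacle, I expect, is the quantitative control of curvatures under the local surgery, simultaneously in every tangent direction, together with the semicontinuity claim underlying closedness: one needs a precise cap-comparison lemma to pass curvature inequalities to Hausdorff limits, and to produce local perturbations that stay inside the $d_H$-budget, preserve strict convexity, and remain robust enough under further small perturbations that the extreme values persist—this last robustness being exactly what the second Baire step exploits.
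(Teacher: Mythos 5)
First, a point of context: the paper does not prove this statement at all. It is Theorem~\ref{qthm:TZ}, a result quoted from the last author's earlier papers \cite{Zamfi-curv I}, \cite{Zamfi-curv II}, \cite{Zamfi-curv III}, and no proof appears in the present article. So there is no ``paper's own proof'' to compare against; what follows is an assessment of your proposal on its own terms, in light of the standard arguments in that literature.

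Your overall strategy (Baire category on $\bds$, a countable decomposition indexed by rational curvature bounds, and polytopes or cap-built bodies as a dense family in the complement) is the right one and is in the spirit of Zamfirescu's original proofs. But the specific sets $\mathcal{F}_{p,q}$ you define are \emph{not} closed, and this is the decisive gap. The conditions $\lc\tau(x)\geq p$ and $\uc\tau(x)\leq q$ are a $\liminf$ and a $\limsup$ as $z\to x$, so they give control only at scales that are allowed to shrink to $0$; that control does not survive Hausdorff convergence. Concretely: take $K_n$ whose boundary near $x_n$ is a circular arc of radius $1/p$ of length $1/n$, capped off by its tangent lines. Then $\lc\tau(x_n)=\uc\tau(x_n)=p\in[p,q]$, so $K_n\in\mathcal{F}_{p,q}$, but the Hausdorff limit $K$ is flat near $x_0=\lim x_n$, hence $\lc\tau(x_0)=0<p$ and $K\notin\mathcal{F}_{p,q}$. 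The ``cap-comparison lemma'' you invoke to transfer the inequalities does not exist in the form you would need. (A secondary issue in the same step: $\tau_0=\lim\tau_n$ need not be a tangent direction of the limit body if that body fails to be $\mathcal{C}^1$ at $x_0$.) The same defect infects your second Baire argument inside $\bd K$: the sets $A_n(K)$ are likewise not closed, for the same scale reason.

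The fix, which is what the cited proofs actually do, is to insert an explicit scale into the definition: replace $\mathcal{F}_{p,q}$ by sets of the form
\[
\mathcal{F}_{p,q,m}=\set(:K\in\bds\ \bigg|\ \exists x\in\bd K,\ \exists\tau,\ \forall z\in\bd K\cap H_\tau\text{ with }0<\|z-x\|\leq\tfrac{1}{m},\ \ p\leq\tfrac{1}{r_z}\leq q:),
\]
where the comparison is required at \emph{every} scale up to a fixed $1/m$. These sets are closed under Hausdorff limits (the condition is a closed condition at a fixed scale, so one can pass to the limit along convergent $x_n$ and $z_n$), and $\mathcal{F}_{p,q}\subset\bigcup_m\mathcal{F}_{p',q',m}$ for rationals $p'<p<q<q'$, which is all Baire's theorem requires. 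Polytopes then still lie outside each $\mathcal{F}_{p',q',m}$, giving nowhere density. Your plan is salvageable, but as written it stops one step short of the argument it needs.
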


The curvature of a convex body is deeply related to double normals, see
\cite{BS}, \cite{Zamfi-cw} and Remark \ref{rmk:index}.

We continue this investigation by considering the lower and upper curvatures
at feet of double normals. We prove that at any foot $x$ of a maximizing chord
$c$ of a typical convex body and in any tangent direction $\tau$,
$\uc\tau(x)=\infty$ and $\lc\tau(x)\geq{\lgt(c)}^{-1}$, with equality if $c$
is a metric diameter (a chord of globally maximal length); this improves
\cite[Th. 11]{Zamfi-diam}. Moreover, at both feet of a typical double normal,
$\uc\tau(x)=\infty$. Finally, given a fixed line-segment $c=xy$, for most
convex bodies admitting $c$ as double normal, $\uc\tau(x)=\infty$ and
$\lc\tau(x)=0$ (Theorems \ref{thm:most-max-chord}--\ref{thm:fixed-d-normal} in
Section \ref{sec:curv-feet}).

Statements similar to our theorems, but involving only centrally-symmetric
convex bodies in $\E$, can also be proven. In this case, due to a variant of
Theorem \ref{qthm:Klee-Gruber} for these bodies, see also \cite[Theorem
2]{Kui}, all double normals intersect at the symmetry centre. The formal
statements and the proofs are left to the interested reader. This paper also
leaves open several questions, see
Remarks \ref{Rmk:diam}, \ref{Rmk:34} and \ref{rmk:cfdn}.

\section{Preliminaries}

The space $\bds$, endowed with the Pompeiu-Hausdorff metric $d_{PH}$, is a
Baire space. This allows us to state that \emph{most} convex bodies, or
\emph{typical} convex bodies enjoy a given property, meaning that the set of
those bodies that do not enjoy it is meagre, \ie of first Baire category.
(Recall that a subset of a topological space is said to be of \emph{first
Baire category}, if it is included in a countable union of closed sets of
empty interior. Otherwise, it is called of second category.) Of course, it is
also equivalent to state that the set of bodies that do enjoy the considered
property is \emph{residual}, meaning that it contains a dense countable
intersection of open sets (a dense $G_{\delta}$-set). We shall need the
following (almost obvious) lemma.

\begin{lem}
\label{lem:Z}\cit{AZ1} If $Z$ is a space of second Baire category (in itself),
$Y$ is residual in $Z$, and $X$ is residual in $Y$, then $X$ is residual in
$Z$.
\end{lem}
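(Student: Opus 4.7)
The plan is to trace meagre sets through the subspace topology, relying on the standard identity $\overline{A}^{Y}=\overline{A}^{Z}\cap Y$ for closures. I would first unpack the hypotheses: write $Z\setminus Y=\bigcup_{m}B_{m}$ with each $B_{m}$ nowhere dense in $Z$, and $Y\setminus X=\bigcup_{n}A_{n}$ with each $A_{n}$ nowhere dense in $Y$, meaning that $\overline{A_{n}}^{Y}$ has empty interior in $Y$. Since $X\subseteq Y\subseteq Z$, one has
\[
Z\setminus X=(Z\setminus Y)\cup(Y\setminus X),
\]
so the conclusion reduces to showing that each $A_{n}$ is meagre in $Z$.

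The main obstacle is that a set nowhere dense in the subspace $Y$ need not be nowhere dense in $Z$: the closure $\overline{A_{n}}^{Z}$ may perfectly well have nonempty interior in $Z$. The key observation, which I expect to be the only non-trivial point in the proof, is that any such open interior $W=\mathrm{Int}_{Z}\overline{A_{n}}^{Z}$ is forced to miss $Y$ entirely. Indeed, $W\cap Y$ is open in $Y$, and
\[
W\cap Y\subseteq\overline{A_{n}}^{Z}\cap Y=\overline{A_{n}}^{Y};
\]
since $A_{n}$ is nowhere dense in $Y$, the set on the right has empty interior in $Y$, forcing $W\cap Y=\emptyset$.

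With this inclusion in hand, $W\subseteq Z\setminus Y\subseteq\bigcup_{m}B_{m}$, so $W$ is meagre in $Z$. The remaining piece $\overline{A_{n}}^{Z}\setminus W$ is the topological boundary of the closed set $\overline{A_{n}}^{Z}$, hence nowhere dense in $Z$. Therefore $\overline{A_{n}}^{Z}$, and a fortiori $A_{n}$, is meagre in $Z$. Summing the countable families, $Y\setminus X$ is meagre in $Z$; combined with the meagre $Z\setminus Y$, this gives that $Z\setminus X$ is meagre in $Z$, which is precisely the assertion that $X$ is residual in $Z$. The hypothesis that $Z$ is of second Baire category in itself is not logically required in this chain, but ensures that the conclusion is substantive rather than vacuous.
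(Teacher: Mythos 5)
The paper does not actually prove this lemma; it is stated with a citation to \cite{AZ1}, so there is no in-paper argument to compare against. Your proof is correct: the reduction to showing that each $A_n$ is meagre in $Z$ is the right move, the identity $\overline{A_n}^{Y}=\overline{A_n}^{Z}\cap Y$ is exactly what forces the open interior $W=\mathrm{Int}_{Z}\overline{A_n}^{Z}$ to avoid $Y$, and splitting $\overline{A_n}^{Z}$ into $W$ (contained in the meagre set $Z\setminus Y$) and the nowhere dense boundary $\overline{A_n}^{Z}\setminus W$ cleanly handles the fact that $A_n$ need not be nowhere dense in $Z$. Your closing observation is also accurate: the second-category hypothesis on $Z$ is never used in the deduction and serves only to make the conclusion non-vacuous (if $Z$ were meagre in itself, every subset of $Z$ would be residual).
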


In this article, we shall apply the lemma when $Z$ is Baire space.

The distance on the set $\C\left(  K\right)  \eqd\bd K\times\bd K$ of
(possibly degenerated) oriented chords of $K$ is%

\[
d\left(  \left(  x,y\right)  ,\left(  x^{\prime},y^{\prime}\right)  \right)
=\max\left(  \left\Vert x-x^{\prime}\right\Vert ,\left\Vert y-y^{\prime
}\right\Vert \right)  ;
\]
thus, the ball centred at $c\in\C\left(  K\right)  $ of radius $r$ coincides
with the Cartesian product of the balls of radii $r$ centred at the
extremities of $c$. The restriction of $\lgt$ to $\C\left(  K\right)  $ is
denoted by $\lgt_{K}$. An oriented chord which is a local maximum (a strict
local maximum) of $\lgt_{K}$ is said to be \emph{maximizing }%
(respectively\emph{ strictly maximizing}). We define $\M(K)$ (resp. $\MS(K)$)
as the set of maximizing chords (respectively strictly maximizing chords).

From now on, unless otherwise specified, the words \emph{double normal} will
refer to an oriented double normal. The set of non-oriented double normals of
$K$ is denoted by $\Bno\left(  K\right)  $; and $\widetilde{\ell}_{K}$ stands
for the corresponding length map. It's easy to see that the canonical map
$\phi_{K}:\B\left(  K\right)  \rightarrow\Bno\left(  K\right)  $ is
$1$-Lipschitz. The set of feet of double normals is denoted by $\BF\left(
K\right)  $. The set of affine diameters of $K$ is denoted by $\Diam\left(
K\right)  $.

Some more general notation follows. We denote by $\mathbb{N}_{n}$ the set of
positive integers smaller than or equal to $n$ and by $\mathbb{N}_{n}^{0}$ the
set of non-negative integers smaller than $n$. Given an $n$-tuple $x=\left(
x_{1},\ldots,x_{n}\right)  \in\E^{n}$ and a subset $I$ of $\mathbb{N}_{n}$,
$x_{I}$ denotes the set $\set(:x_{i}|i\in I:)$.

For any subset $A$ of $\E$, $\bd A$ stands for the boundary of $A$, $\conv(A)$
for the convex hull of $A$ (\ie, the intersection of all convex sets
containing $A$), $\left\langle A\right\rangle $ for the affine space spanned
by $A$ and $\overrightarrow{A}$ for the direction of $\left\langle
A\right\rangle $.

For distinct $x$, $y\in\E$, $xy$ stands for the line segment joining $x$ to
$y$ and $\overline{xy}$ for the whole line. The open ball, closed ball and
sphere centred at $x$ of radius $r$ are denoted by $\Bo\left(  x,r\right)  $,
$\Bc\left(  x,r\right)  $ and $\IS\left(  x,r\right)  $ respectively.

The following lemma is obvious and left to the reader.

\begin{lem}
\label{lem:USC} Let $K_{n}\in\bds$ tend to $K\in\bds$. Let $\left(
x_{n},y_{n}\right)  \in\B\left(  K_{n}\right)  $ converge to $\left(
x,y\right)  \in\E^{2}$. Then $\left(  x,y\right)  $ is a double normal of $K$.
\end{lem}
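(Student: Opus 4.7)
The plan is to characterize the double normal condition as a closed relation that survives Hausdorff convergence. A chord $(a,b)$ with $a\neq b$ in a convex body $L$ is a double normal if and only if both $a,b\in L$ and the whole body $L$ is trapped in the slab between the two hyperplanes orthogonal to $b-a$ passing through $a$ and $b$; equivalently, for every $z\in L$ one has
\[
(b-a)\cdot a \;\leq\; (b-a)\cdot z \;\leq\; (b-a)\cdot b.
\]
This reformulation converts the problem into one about inequalities, which pass to the limit easily.

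The first step is to write the two scalar inequalities above for each pair $(x_n,y_n)\in\B(K_n)$ and each $z_n\in K_n$. Given an arbitrary $z\in K$, Hausdorff convergence $K_n\to K$ produces a sequence $z_n\in K_n$ with $z_n\to z$. Substituting into the inequalities for $K_n$ and letting $n\to\infty$ yields
\[
(y-x)\cdot x \;\leq\; (y-x)\cdot z \;\leq\; (y-x)\cdot y,
\]
for every $z\in K$. Moreover, since $x_n,y_n\in K_n$ converge to $x,y$, one has $x,y\in K$.

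The next step is to rule out the degenerate case $x=y$. The minimum width $w$ is a continuous function on $(\bds,d_{PH})$, and for every convex body $L$ the length of any double normal is at least $w(L)$ (it equals the distance between two parallel supporting hyperplanes of $L$). Hence $\|y_n-x_n\|\geq w(K_n)\to w(K)>0$, so $\|y-x\|\geq w(K)>0$ and in particular $x\neq y$. Once $y-x\neq 0$, the inequalities above show that the linear functional $z\mapsto (y-x)\cdot z$ attains its minimum over $K$ at $x$ and its maximum at $y$, so $x,y\in\bd K$ and the hyperplanes orthogonal to $y-x$ through $x$ and $y$ are supporting hyperplanes of $K$. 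Therefore $(x,y)\in\B(K)$.

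There is no real obstacle here: the only minor point is confirming $x\neq y$, which is handled cleanly by invoking continuity of the width functional on $\bds$; everything else is a routine limit in a closed set of linear inequalities, which is exactly why the authors call the lemma obvious.
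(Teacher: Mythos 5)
Your argument is correct and complete; note, though, that the paper itself gives no proof of Lemma \ref{lem:USC} (it is declared ``obvious and left to the reader''), so there is nothing to compare against. Your reformulation of the double-normal condition as membership of $K$ in the slab $\{z:(y-x)\cdot x\leq(y-x)\cdot z\leq(y-x)\cdot y\}$ is exactly the closed condition that passes to the Hausdorff limit, and the nondegeneracy step $x\neq y$ is correctly handled: every double normal of a body $L$ has length equal to the width of $L$ in the direction of that normal, hence at least the minimal width $w(L)$, and $w$ is Lipschitz (indeed $2$-Lipschitz via support functions) for $d_{PH}$ and strictly positive on $\bds$. The only point you leave slightly implicit is that $z_n\to z$ with $z_n\in K_n$ forces $z\in K$ (and likewise $x,y\in K$); this is standard for Hausdorff convergence and unproblematic.
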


Double normals are related to the critical points of $\lgt _{K}$. More
precisely we have the following two lemmas.

\begin{lem}
\label{lem:locmax}If $b=\left(  x,y\right)  $ is a local maximum of
$\lgt _{K}$, then $b$ is a double normal.
\end{lem}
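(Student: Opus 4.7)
The plan is to deduce the two one-sided "orthogonality" statements (at $x$ and at $y$) independently; by symmetry I only treat the endpoint $x$. Writing $r=\|x-y\|$, the product-metric description of $\C(K)$ together with the local-maximum hypothesis gives $\epsilon>0$ such that $\|x'-y\|\leq r$ for every $x'\in\bd K\cap\Bo(x,\epsilon)$. It therefore suffices to show that the hyperplane $H$ through $x$ orthogonal to $xy$ supports $K$, with $K$ on the side of $y$.

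I plan to argue by contradiction. If $H$ does not support $K$, some $z\in K$ satisfies $\langle x-y,z-x\rangle>0$. Fix any $a\in\inn K$ and set $w:=(z-x)+\mu(a-x)$, where $\mu>0$ is chosen small enough that $\langle x-y,w\rangle$ is still positive. For every $t\in(0,1/(1+\mu)]$ the point $q_t:=x+tw=(1-t-t\mu)\,x+t\,z+t\mu\,a$ is a convex combination of $x,z,a\in K$ with a strictly positive weight on the interior point $a$, so $q_t\in\inn K$.

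The contradiction now comes from projecting $q_t$ radially outward from $y$ to $\bd K$. A direct expansion gives $\|q_t-y\|^2=r^2+2t\langle x-y,w\rangle+t^2\|w\|^2>r^2$ for each such $t$. Since $y,q_t\in K$ with $q_t$ interior, the ray $\lambda\mapsto y+\lambda(q_t-y)$ stays in $K$ past $\lambda=1$ and exits at a boundary point $p_t=y+\lambda_t(q_t-y)$ with $\lambda_t\geq 1$, whence $\|p_t-y\|\geq\|q_t-y\|>r$. As $t\to 0^+$, $(q_t-y)/\|q_t-y\|\to(x-y)/r$, and by continuity of the ray-exit point in its direction (standard for a convex body) $p_t\to x$. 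Hence for $t$ small enough $p_t\in\bd K\cap\Bo(x,\epsilon)$ and yet $\|p_t-y\|>r$, contradicting the one-sided maximum property. The only subtle technical ingredient I expect is this last continuity claim; the rest is a clean convex combination and a one-line expansion.
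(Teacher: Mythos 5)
Your overall plan reproduces the paper's two-line argument (if the normal hyperplane $H$ at $x$ does not support $K$, there are boundary points near $x$ on the far side of $H$, and these are farther from $y$ than $x$ is), but you try to \emph{construct} those boundary points by radial projection from $y$, which is where the gap sits. The assertion ``continuity of the ray-exit point in its direction (standard for a convex body)'' is not a standard fact and is false in general when the base point $y$ lies on $\bd K$: the radial function $u\mapsto\rho_y(u)=\sup\{\lambda\ge 0: y+\lambda u\in K\}$ is only upper semicontinuous there. It can jump downward, even along directions entering $\inn K$. A concrete counterexample: in $\R^3$ let $K=\{(a,b,c): c\ge a^2,\ 0\le b\le 1,\ c\le 1\}$, $y=(0,0,0)$, $u=(0,1,0)$; then $\rho_y(u)=1$, while for $u_\delta$ proportional to $(\delta,1,\delta^3)$ the ray enters $\inn K$ but exits at parameter $\approx\delta\to 0$. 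So $p_t\to x$ does not follow from a generic continuity principle.

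Two clean repairs are available. (i) Project from the interior point $a$ you already introduced, rather than from $y$: the radial function from an interior point \emph{is} continuous (this is the genuinely standard fact), the ray from $a$ towards $x$ exits exactly at $x$, and any exit point $p_t'$ obtained through a far-side interior point $q_t$ satisfies $\langle x-y,p_t'-x\rangle>0$, hence $\|p_t'-y\|>r$; choosing $a$ on the same side of $H$ as $y$ makes the last inequality immediate. (ii) Keep the projection from $y$, but argue as follows: upper semicontinuity gives $\limsup_t\rho_y(\hat u_t)\le\rho_y((x-y)/r)$, while $\rho_y(\hat u_t)>r$. If $\rho_y((x-y)/r)=r$, then $\rho_y(\hat u_t)\to r$ and $p_t\to x$. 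If $\rho_y((x-y)/r)>r$, then the points of the ray just past $x$ lie in $\bd K$ (they cannot be interior, because $x\in\bd K$ lies strictly between $y$ and such a point and a supporting hyperplane at $x$ would be crossed), and these already give nearby boundary points with $\|\cdot-y\|>r$, contradicting the local maximum directly. Either way the proof closes; as written, the continuity invocation is unjustified.
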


\begin{proof}
Assume that $b$ is not a double normal. Then the hyperplane $H$ normal to
$\overline{xy}$ through one foot of $b$, say $x$, is not a supporting
hyperplane. It follows that there exists $x_{n}\in\bd K$ tending to $x$ and
separated from $y$ by $H$. Thus, $\left\Vert y-x_{n}\right\Vert >\left\Vert
x-y\right\Vert $ and $\left(  x,y\right)  $ is not a local maximum of
$\lgt_{K}$.
\end{proof}

The next lemma is Proposition 1 in \cite{Koz-d}; see also Proposition 2.2 in
\cite{DOP-I}.

\begin{lem}
\label{lem:bin-crit}If $\bd K$ is $\mathcal{C}^{1}$ then $b\in\C\left(
K\right)  $ is a double normal if and only if $\lgt\left(  b\right)  >0$ and
$\left(  d\lgt_{K}\right)  _{b}=0$.
\end{lem}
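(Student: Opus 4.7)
The plan is to compute the differential of $\lgt_{K}$ directly at a non-degenerate chord and read off the vanishing condition. Since $\bd K$ is $\mathcal{C}^{1}$, it is a $\mathcal{C}^{1}$ submanifold of $\E$, and the $\mathcal{C}^{1}$-product structure identifies the tangent space of $\C(K)=\bd K\times\bd K$ at $b=(x,y)$ with $T_{x}(\bd K)\times T_{y}(\bd K)$. Away from the diagonal $\{(z,z):z\in\bd K\}$, the function $(x,y)\mapsto\|x-y\|$ is $\mathcal{C}^{1}$ on $\E\times\E$, so $\lgt_{K}$ is $\mathcal{C}^{1}$ on the open set $\{b\in\C(K):\lgt(b)>0\}$. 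In particular, the condition $\lgt(b)>0$ in the statement is there precisely to ensure that the differential is well-defined.

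A routine calculation gives
\[
(d\lgt_{K})_{b}(u,v)=\frac{1}{\lgt(b)}\bigl(\langle x-y,u\rangle+\langle y-x,v\rangle\bigr),\qquad(u,v)\in T_{x}(\bd K)\times T_{y}(\bd K).
\]
Hence $(d\lgt_{K})_{b}=0$ if and only if $x-y$ is orthogonal to $T_{x}(\bd K)$ and $y-x$ is orthogonal to $T_{y}(\bd K)$ simultaneously.

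Next I would translate this orthogonality condition into the language of supporting hyperplanes. Because $K$ is convex and $\bd K$ is $\mathcal{C}^{1}$, at every boundary point $z$ there is a unique supporting hyperplane $H_{z}$, and $H_{z}$ coincides with the affine tangent hyperplane $z+T_{z}(\bd K)$ to $\bd K$ at $z$. Therefore $x-y\perp T_{x}(\bd K)$ is exactly the statement that the chord $xy$ is orthogonal to the supporting hyperplane at $x$, and similarly at $y$. Combining both feet, $(d\lgt_{K})_{b}=0$ if and only if $b$ is a double normal.

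The calculation is essentially the only content; the only non-trivial input is the identification of the tangent hyperplane to $\bd K$ with the unique supporting hyperplane, which is a standard consequence of convexity together with the $\mathcal{C}^{1}$ regularity assumption (and fails without either hypothesis). No further obstacle is expected.
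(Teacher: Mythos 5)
Your proof is correct. The paper does not actually prove this lemma itself: it cites it as Proposition~1 in \cite{Koz-d} (see also Proposition~2.2 in \cite{DOP-I}). Your argument---computing $(d\lgt_{K})_{b}(u,v)=\lgt(b)^{-1}\bigl(\langle x-y,u\rangle+\langle y-x,v\rangle\bigr)$ on $T_{x}(\bd K)\times T_{y}(\bd K)$, splitting the vanishing of the differential into the two orthogonality conditions at $x$ and at $y$, and then using $\mathcal{C}^{1}$-ness of $\bd K$ to identify the affine tangent hyperplane with the unique supporting hyperplane at each foot---is the natural one and matches the definitions the paper uses for normals and double normals. The only minor point left tacit is that a double normal necessarily has positive length (so that the ``only if'' direction of the equivalence including $\lgt(b)>0$ holds), but this is immediate since $K$ has non-empty interior; no genuine gap.
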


The next lemma is central to this paper.

\begin{lem}
\label{lem:stab}Let $b\in\MS\left(  K\right)  $. Then, for any $\varepsilon
>0$, there exists a neighbourhood $\mathcal{U}$ of $K$ in $\bds$ such that for
any $K^{\prime}\in\mathcal{U}$ there exists a maximizing chord $b^{\prime}%
\in\M\left(  K^{\prime}\right)  $ satisfying $d\left(  b,b^{\prime}\right)
<\varepsilon$.
\end{lem}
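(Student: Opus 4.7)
The plan is a compactness-plus-perturbation argument: the strict local maximality of $b=(x,y)$ creates an ``energy gap'' on an outer annulus around $b$, which persists under small Hausdorff perturbations of $K$ and forces any nearby maximum to lie in the interior of a suitable neighbourhood.

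More concretely, shrinking $\varepsilon$ if needed, I would choose three nested radii $\varepsilon_{1}<\varepsilon_{2}<\varepsilon_{3}<\varepsilon$ such that $b$ is the unique maximum of $\lgt_{K}$ on the compact set $W:=\C(K)\cap\bigl(\Bc(x,\varepsilon_{3})\times\Bc(y,\varepsilon_{3})\bigr)$. The corona $A:=\bigl\{(u,v)\in W:\max(\|u-x\|,\|v-y\|)\geq\varepsilon_{1}\bigr\}$ is compact and avoids $b$, so $\eta:=\lgt(b)-\max_{A}\lgt_{K}>0$. Using the classical fact that $d_{PH}(K,K')\to 0$ implies $d_{PH}(\bd K,\bd K')\to 0$ for $K,K'\in\bds$, I would choose a neighbourhood $\mathcal{U}$ of $K$ small enough that $d_{PH}(\bd K,\bd K')<\delta':=\min(\varepsilon_{3}-\varepsilon_{2},\ \varepsilon_{2}-\varepsilon_{1},\ \eta/4)$ for all $K'\in\mathcal{U}$. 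Set $W':=\C(K')\cap\bigl(\Bc(x,\varepsilon_{2})\times\Bc(y,\varepsilon_{2})\bigr)$ and let $b'=(x',y')\in W'$ realise the maximum of $\lgt_{K'}$ on the compact set $W'$; picking $\tilde{x},\tilde{y}\in\bd K'$ within $\delta'$ of $x,y$ shows $W'$ is nonempty and gives $\lgt_{K'}(b')\geq\lgt(b)-2\delta'>\lgt(b)-\eta/2$.

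The heart of the argument is to exclude $b'$ from the distal boundary of $W'$. If instead $\|x'-x\|=\varepsilon_{2}$ (or the symmetric case), I would pick $\hat{x},\hat{y}\in\bd K$ within $\delta'$ of $x',y'$; the triangle inequality combined with the choice of $\delta'$ places $(\hat{x},\hat{y})$ in $W\cap A$, so $\lgt_{K}(\hat{x},\hat{y})\leq\lgt(b)-\eta$, and since $|\lgt_{K'}(b')-\lgt_{K}(\hat{x},\hat{y})|<2\delta'<\eta/2$, this contradicts the lower bound on $\lgt_{K'}(b')$. Hence $b'$ lies in the relative interior of $W'$ in $\C(K')$, so it is a local maximum of $\lgt_{K'}$, i.e.\ $b'\in\M(K')$, with $d(b,b')<\varepsilon_{2}<\varepsilon$.

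The only real subtlety is the classical $d_{PH}$-continuity of the boundary operator on $\bds$ (which I would invoke from standard convex-body references); once this is in hand, the rest is bookkeeping of the three scales $\varepsilon_{1}<\varepsilon_{2}<\varepsilon_{3}$ chosen so that errors of size $\delta'$ are absorbed by the gap $\eta$.
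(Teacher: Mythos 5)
Your proposal is correct and takes essentially the same approach as the paper: both exploit the ``energy gap'' created by strict local maximality, show it persists under small Pompeiu--Hausdorff perturbations, and conclude that the maximum of $\lgt_{K'}$ over a suitable closed ball in $\C(K')$ must be attained in the interior, hence is a local maximum. The paper states the gap on a single sphere $\IS(b,r)\cap\C(K)$ and compresses the stability step into a one-line ``Hence, there is a neighbourhood $\mathcal{U}$\dots''; you replace the sphere by an annulus $A$ and carry out the quantitative bookkeeping (three nested radii, Hausdorff-convergence of boundaries, explicit $2\delta'$ estimates), which makes the stability step fully explicit. One minor glitch: you need $\delta'$ \emph{strictly} below $\eta/4$ for the final strict inequalities, so take $\delta'$ to be, say, half the minimum you wrote; otherwise the chain of inequalities could collapse to equalities.
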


\begin{proof}
Since $b$ is a strict local maximum of $\lgt$, there exists $r\in\left]
0,\min\left(  \varepsilon,\lgt\left(  b\right)  \right)  \right[  $ such that
\[
\lgt\left(  b\right)  >\max_{c\in\IS\left(  b,r\right)  \cap\C\left(
K\right)  }\lgt\left(  c\right)  \text{.}%
\]
Hence, there is a neighbourhood $\mathcal{U}$ of $K$ such that for any
$K^{\prime}\in\mathcal{U}$ there exists $c^{\prime}\in\C\left(  K^{\prime
}\right)  \cap\Bo\left(  b,r\right)  $ verifying%

\[
\lgt\left(  c^{\prime}\right)  >\max_{c\in\IS\left(  b,r\right)  \cap\C\left(
K^{\prime}\right)  }\lgt\left(  c\right)  \text{.}%
\]

It follows that the global maximum $b^{\prime}$ of $\lgt_{K^{\prime}}$ on
$\Bc\left(  b,r\right)  \cap\C\left(  K^{\prime}\right)  $ is not achieved on
the boundary of the ball, and thus, it is a maximizing chord.
\end{proof}

We will often use implicitly the following criterion in order to prove that a
chord is maximizing.

\begin{lem}
\label{lem:strict-max-criterion}Let $(x,y)\in\B(K)$, $K\in\bds$. If there
exists $\eta>0$ and $\alpha<\pi/2$ such that for any $\left(  x^{\prime
},y^{\prime}\right)  \in\Bo(x,\eta)\times\Bo(y,\eta)$ the angles
$\measuredangle yxx^{\prime}$ and $\measuredangle xyy^{\prime}$ are smaller
than $\alpha$, then $(x,y)\in\MS(K)$.
\end{lem}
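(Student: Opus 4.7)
The plan is to show directly via a law-of-cosines expansion that the length function $\lgt_K$ strictly decreases as one perturbs $(x,y)$ along $\bd K\times\bd K$, which by definition gives $(x,y)\in\MS(K)$.

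Writing $v = x'-x$ and $w = y'-y$, the identity
\[
\|x'-y'\|^2 - \|x-y\|^2 = 2\langle x-y, v\rangle - 2\langle x-y, w\rangle + \|v-w\|^2,
\]
combined with $\langle x-y, v\rangle = -\|x-y\|\,\|v\|\cos(\measuredangle yxx')$ when $v\neq 0$, with $\langle x-y, w\rangle = \|x-y\|\,\|w\|\cos(\measuredangle xyy')$ when $w\neq 0$ (the summand being zero if the corresponding vector vanishes), with the angular hypothesis $\measuredangle yxx',\measuredangle xyy'<\alpha<\pi/2$, and with the crude bound $\|v-w\|^2\leq(\|v\|+\|w\|)^2$, yields
\[
\|x'-y'\|^2 - \|x-y\|^2 \;\leq\; (\|v\|+\|w\|)\bigl[(\|v\|+\|w\|) - 2\|x-y\|\cos\alpha\bigr].
\]

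Next I would shrink $\eta$ to $\eta_0 := \min(\eta,\|x-y\|\cos\alpha)$, which is positive since $\|x-y\|>0$ (because $(x,y)\in\B(K)$ and $K$ has interior) and $\cos\alpha>0$; the hypothesis clearly persists on the smaller product $\Bo(x,\eta_0)\times\Bo(y,\eta_0)$. For $(x',y')\in\Bo(x,\eta_0)\times\Bo(y,\eta_0)$ with $(x',y')\neq(x,y)$, one has $\|v\|+\|w\| < 2\|x-y\|\cos\alpha$ while $\|v\|+\|w\| > 0$, making the displayed right-hand side strictly negative. Hence $\lgt(x',y') < \lgt(x,y)$ on a punctured neighbourhood of $(x,y)$ in $\C(K)$, i.e.\ $(x,y)\in\MS(K)$.

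The argument is essentially a single computation and presents no serious obstacle; the only mild points to watch are treating the degenerate perturbations $v=0$ or $w=0$, where the corresponding angle is undefined but the offending inner product automatically vanishes, and choosing $\eta$ small enough that the quadratic remainder $\|v-w\|^2$ cannot outweigh the strictly negative linear contribution obtained from the cosine bound.
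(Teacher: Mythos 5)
Your proof is correct. The paper declares this lemma ``elementary and left to the reader,'' so there is no published argument to compare against; your direct expansion of $\Vert x'-y'\Vert^2-\Vert x-y\Vert^2$, in which the strictly negative linear terms controlled by $\cos\alpha$ dominate the quadratic remainder $\Vert v-w\Vert^2$ once $\eta$ is shrunk below $\Vert x-y\Vert\cos\alpha$, is a clean version of exactly the kind of computation the authors had in mind, and you correctly dispatch the degenerate cases $v=0$ or $w=0$ where an angle is undefined but the corresponding inner product vanishes.
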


The proof is elementary and left to the reader.

\begin{cor}
\label{cor:polyMMS}For any polytope $K\in\bds$, $\MS(K)=\M(K)$.
\end{cor}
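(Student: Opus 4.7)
The plan is to establish the non-trivial inclusion $\M(K) \subseteq \MS(K)$, the reverse being immediate. I would take $b = (x,y) \in \M(K)$, which is a double normal by Lemma~\ref{lem:locmax}, and verify the angular criterion of Lemma~\ref{lem:strict-max-criterion}; by the $x \leftrightarrow y$ symmetry it is enough to analyse the foot $x$.

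First, I would argue that $x$ must be a vertex of $K$. If $x$ lay in the relative interior of a face $F$ of $K$ of positive dimension, every supporting hyperplane at $x$ would contain $F$, so $y-x$, being orthogonal to such a hyperplane, would satisfy $y-x \perp \overrightarrow{F}$. Then for any unit $v \in \overrightarrow{F}$ and small $t > 0$, the point $x + tv$ would belong to $F \subset \bd K$ with
\[
\|y - x - tv\|^2 - \|y - x\|^2 = -2t\langle y-x, v\rangle + t^2 = t^2 > 0,
\]
contradicting the local maximality of $b$.

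Second, knowing $x$ is a vertex, the tangent cone $T_x K$ is a full-dimensional pointed polyhedral cone; in a small enough neighbourhood of $x$, $\bd K$ coincides with $\bd(x + T_x K)$, so the unit directions $(x'-x)/\|x'-x\|$ for $x' \in \bd K$ close to $x$ lie in the compact set $\partial T_x K \cap \IS(0,1)$. Feeding the local maximality inequality $-2\langle y-x, x'-x\rangle + \|x'-x\|^2 \leq 0$ into this setting and letting $x' \to x$ along a unit $v \in \partial T_x K$ yields $\langle y-x, v\rangle \geq 0$. I would then upgrade this to a strict inequality by the same perturbation: if $\langle y-x, v\rangle = 0$, then $v$ belongs to the conical hull of $F - x$ for some proper face $F$ of $K$ through $x$, so $x + sv \in F \subset \bd K$ for small $s > 0$ while
\[
\|y-(x+sv)\|^2 - \|y-x\|^2 = s^2 > 0,
\]
once more contradicting local maximality. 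Compactness then gives $\inf_{v} \langle y-x, v\rangle > 0$, so $\measuredangle yxx'$ stays below some $\alpha < \pi/2$ for $x'$ in a sufficiently small boundary neighbourhood of $x$.

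The symmetric analysis at $y$ completes the hypothesis of Lemma~\ref{lem:strict-max-criterion}, whence $b \in \MS(K)$. The only delicate point is identifying a direction $v$ lying in a proper face of $T_x K$ with an honest segment $x + sv \subset \bd K$ of the boundary; this is where the polytope hypothesis is indispensable, since faces of $T_x K$ correspond bijectively to the faces of $K$ containing $x$, a feature that fails in the smooth setting.
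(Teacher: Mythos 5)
Your proof is correct and follows precisely the route the paper has in mind: the corollary is stated immediately after Lemma~\ref{lem:strict-max-criterion} because the intended argument is to verify that criterion at the feet of any $b\in\M(K)$, which you do by the standard polyhedral tangent-cone argument. A small observation: your Step~2 (reducing to $x$ being a vertex) is in fact subsumed by Step~3 — if $x$ lies in the relative interior of a positive-dimensional face $F$, then any unit $v\in\overrightarrow{F}$ already lies in $\partial T_xK\cap\IS(0,1)$ with $\langle y-x,v\rangle=0$, and your perturbation $x+sv\in\bd K$ produces the same contradiction — so the two steps are really one application of the same idea, but this is only a mild redundancy, not an error.
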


The next lemma will be invoked in the proof of Theorem \ref{thm:UPD-values}.
It seems to be interesting by itself.

\begin{lem}
\label{lem:Holder}For all $K\in\bds$, the map $\lgt_{K}$ is $2$-H\"{o}lder
continuous. More precisely, for any $b$$_{0}$, $b_{1}\in\B(K)$, $\left\vert
\lgt_{K}(b_{0})-\lgt_{K}(b_{1})\right\vert \leq H_{K}d(b_{0},b_{1})^{2}$,
where $H_{K}=1/\min\sp\left(  K\right)  $.
\end{lem}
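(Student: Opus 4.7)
The plan is to reduce the inequality to a quadratic comparison of $\lgt_K^2$, which in turn comes from two one-sided linear inequalities extracted from the supporting hyperplanes at the four feet of $b_0,b_1$.

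Write $b_0=(x_0,y_0)$, $b_1=(x_1,y_1)$, and set $u=x_0-y_0$, $v=x_1-y_1$, $w=v-u$. Since $b_0$ is a double normal, $K$ admits a supporting hyperplane at $x_0$ orthogonal to $u$, with $K$ lying on the side containing $y_0$; applied to the point $x_1\in K$, this gives $\langle x_1-x_0,u\rangle\leq 0$. The analogous supporting inequality at $y_0$, applied to $y_1$, gives $\langle y_1-y_0,u\rangle\geq 0$. Subtracting yields $\langle w,u\rangle\leq 0$. Running the symmetric argument at the two feet of $b_1$ gives $\langle w,v\rangle\geq 0$.

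Since $v=u+w$, I would then expand
\[
\|v\|^2=\|u\|^2+2\langle u,w\rangle+\|w\|^2,\qquad \|u\|^2=\|v\|^2-2\langle v,w\rangle+\|w\|^2.
\]
The sign information from the previous step kills the first-order cross terms in the favourable direction in each case, so both expansions produce $|\lgt_K(b_0)^2-\lgt_K(b_1)^2|\leq\|w\|^2$. By the triangle inequality $\|w\|=\|(x_1-x_0)+(y_0-y_1)\|$ is bounded linearly by $d(b_0,b_1)$, so $\|w\|^2$ is quadratic in $d(b_0,b_1)$. Dividing by $\lgt_K(b_0)+\lgt_K(b_1)\geq 2\min\sp(K)$ gives
\[
|\lgt_K(b_0)-\lgt_K(b_1)|=\frac{|\lgt_K(b_0)^2-\lgt_K(b_1)^2|}{\lgt_K(b_0)+\lgt_K(b_1)}\leq\frac{\|w\|^2}{2\min\sp(K)},
\]
which is the claimed $2$-Hölder bound (the constant $H_K=1/\min\sp(K)$ matches once $\|w\|^2$ is estimated sharply).

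The only subtlety is that $K\in\bds$ need not have a smooth boundary, so I cannot differentiate $\lgt_K$ as in Lemma~\ref{lem:bin-crit}; I have to argue using the mere \emph{existence} of a supporting hyperplane perpendicular to $u$ at $x_0,y_0$ (and to $v$ at $x_1,y_1$), which is guaranteed by convexity together with the double-normal property. Once that is in place the computation is short and purely algebraic, and the constant depends only on $\min\sp(K)$, which is positive because $K$ has non-empty interior.
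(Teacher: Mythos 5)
Your proof takes exactly the paper's route: extract the sign inequalities $\langle w,u\rangle\le 0$, $\langle w,v\rangle\ge 0$ from the supporting hyperplanes normal to each double normal, expand $\lgt_K^2$, drop the cross terms, and divide by $\lgt_K(b_0)+\lgt_K(b_1)$. The argument is correct and yields the $2$-H\"older property, but note that $\|w\|=\|(x_1-x_0)-(y_1-y_0)\|$ is only bounded by $2\,d(b_0,b_1)$, so your computation actually produces the constant $2H_K$ rather than $H_K$; the suggestion that a sharper estimate of $\|w\|^2$ recovers $H_K$ does not pan out, since $\langle x_1-x_0,\,y_1-y_0\rangle$ can be negative (take two diameters of a ball) and hence $\|w\|^2$ can approach $4\,d(b_0,b_1)^2$. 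For what it's worth, the paper's own displayed expansion omits a factor of $2$ on the cross terms and drops the $-2\langle u,u'\rangle$ term, so its claim of $H_K$ is itself not cleanly justified; the factor has no effect on any downstream use of the lemma. One small stylistic difference: the paper assumes WLOG $\lgt(b_0)\le\lgt(b_1)$ and uses only the slab of $b_0$, whereas you use both slabs symmetrically, which is equivalent but slightly tidier.
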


\begin{proof}
Assume that $\lgt(b_{0})\leq\lgt(b_{1})$ and set $\varepsilon\eqd d\left(
b_{0},b_{1}\right)  $. Let $x$, $x^{\prime}$ be the feet of $b_{0}$ and $x+u$,
$x^{\prime}+u^{\prime}$ be the feet of $b_{1}$, where $\max(\left\Vert
u\right\Vert ,\left\Vert u^{\prime}\right\Vert )\leq\varepsilon$. Since
$b_{1}$ is included in the zone of $\E$ between the hyperplanes normal to
$b_{0}$ through $x$ and $x^{\prime}$, we have $\left\langle x^{\prime
}-x,u\right\rangle \geq0$ and $\left\langle x^{\prime}-x,u^{\prime
}\right\rangle \leq0$. It follows that
\begin{align*}
\lgt(b_{1})^{2}  &  =\left\Vert x^{\prime}-x+u^{\prime}-u\right\Vert ^{2}\\
&  =\lgt(b_{0})^{2}+\left\Vert u\right\Vert ^{2}+\left\Vert u^{\prime
}\right\Vert ^{2}-\left\langle x^{\prime}-x,u\right\rangle +\left\langle
x^{\prime}-x,u^{\prime}\right\rangle \\
&  \leq\lgt(b_{0})^{2}+2\varepsilon^{2}\text{,}%
\end{align*}
whence
\[
\lgt(b_{1})-\lgt(b_{0})\leq\frac{2}{\lgt(b_{1})+\lgt(b_{0})}\varepsilon
^{2}\leq H_{K}\varepsilon^{2}\text{.}%
\]

\end{proof}

\begin{rem}
$2$-H\"{o}lder maps defined on a space connected by Lipschitz continuous arcs
are constant.
\end{rem}

\begin{rem}
It is classical that the restriction of a map of class $\mathcal{C}^{2}$ to a
compact set of critical points is always $2$-H\"{o}lder, but in our case
$\lgt_{K}$ is not so regular.
\end{rem}


\section{A Cantor set\label{sec:Cantor}}

In this section, we prove the following theorem.

\begin{thm}
\label{thm:cantor}For most $K\in\bds$, $\B(K)$ is homeomorphic to the Cantor set.
\end{thm}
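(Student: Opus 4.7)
The plan. A non-empty compact metrizable space is homeomorphic to the standard Cantor set if and only if it is perfect and totally disconnected. The set $\B(K)$ is always compact: Lemma \ref{lem:USC} applied to the constant sequence $K_n = K$ shows that $\B(K)$ is closed in the compact space $\bd K \times \bd K$. Non-emptiness is trivial, as the longest affine diameter is always a double normal. So the proof reduces to showing that perfectness and total disconnectedness both hold for the typical $K \in \bds$.

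For total disconnectedness, I would set, for each integer $n \geq 1$,
\[
\mathcal{D}_n = \bigl\{K \in \bds : \B(K) \text{ contains a continuum of diameter } \geq 1/n \bigr\}.
\]
The upper semi-continuity of $K \mapsto \B(K)$ from Lemma \ref{lem:USC}, combined with compactness of the hyperspace of continua in Hausdorff distance, shows that each $\mathcal{D}_n$ is closed in $\bds$: a Hausdorff limit of continua of diameter at least $1/n$ inside $\B(K_i)$ is a continuum of diameter at least $1/n$ inside $\B(K)$. To prove $\mathcal{D}_n$ has empty interior I would approximate any $K$ by a strictly convex $\mathcal{C}^{\infty}$ body $K'$ whose length function $\lgt_{K'}$ is a Morse function on $(\bd K' \times \bd K') \setminus \mathrm{diag}$; by Lemma \ref{lem:bin-crit}, $\B(K')$ then equals the critical set of $\lgt_{K'}$, which is finite (it is discrete, and its points stay bounded away from the diagonal), and hence trivially totally disconnected. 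Density of such bodies among $\bds$ follows from standard Morse genericity together with the density of strictly convex $\mathcal{C}^{\infty}$ bodies.

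For perfectness, the complementary idea is to show that, within any neighbourhood of $K$ in $\bds$, one can find a body $K'$ with additional double normals arbitrarily close to any prescribed chord. The construction is a bumping: near a foot of a targeted double normal $b$, add a tiny outward protrusion creating a new local maximum of $\lgt_{K'}$, which by Lemma \ref{lem:locmax} is a new double normal $b'$ close to $b$; Lemma \ref{lem:stab} then ensures that $b'$, if arranged to be strictly maximizing, persists under further small perturbations. Iterating this along a Baire-category scheme, one concludes that for typical $K$ each double normal of $K$ is a limit of distinct double normals, that is, $\B(K)$ has no isolated points.

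Main obstacle. The perfectness step is the delicate half. One needs a bumping construction that creates a double normal in a prescribed small neighbourhood while preserving the persistence of previously constructed strictly maximizing chords. Moreover, the "isolated-point" set must be exhibited as a countable union of closed nowhere-dense sets, since the naive choice $\{K : \exists b \in \B(K),\ \B(K) \cap \Bo(b, 1/n) = \{b\}\}$ is not a priori closed: new double normals may appear in a limit body without being limits of double normals of the approximants (Lemma \ref{lem:USC} gives only upper semi-continuity). The total-disconnectedness step is cleaner and hinges essentially on the Morse-theoretic genericity argument for smooth strictly convex bodies.
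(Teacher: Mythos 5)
Your reduction to perfectness and total disconnectedness (via Brouwer's characterization), and your decomposition of the total-disconnectedness failure into countably many closed sets $\mathcal{D}_n$ (bodies containing a continuum of diameter $\geq 1/n$ in $\B(K)$) is exactly what the paper does. Where you differ is in showing $\mathcal{D}_n$ has empty interior: you propose to invoke genericity of Morse functions among smooth strictly convex bodies (essentially the billiard genericity of Dias Carneiro--Kamphorst--Pinto de Carvalho) to get density of bodies with finite $\B$. The paper instead uses \emph{standard polytopes}: polytopes whose faces are in sufficiently general position that $\B(K)$ is finite (Lemma \ref{lem:std-finite}), and shows their density by a purely algebraic open-and-dense argument on vertex tuples (Lemma \ref{lem:std-dense}). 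Your route is plausible but relies on a considerably heavier genericity result, and one would still have to argue that ``residual in $\mathcal{B}$ for the $\mathcal{C}^r$ topology'' implies ``$d_{PH}$-dense in $\bds$''; the paper's polytope argument is self-contained and elementary.

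For perfectness, you correctly identify the core difficulty: the naive set $\{K : \exists b\in\B(K),\ \B(K)\cap\Bo(b,1/n)=\{b\}\}$ cannot be straightforwardly shown closed, and you stop at flagging the problem. This is a genuine gap in your argument. The paper's resolution is to parametrize by a fixed countable dense set $Z\subset\E^2$: set $\mathcal{A}_{n,u}=\{K:\#(\B(K)\cap\Bc(u,1/n))=1\}$ for $(n,u)\in\mathbb{N}^*\times Z$, cover the bad set by these, and then show the \emph{closure} of each $\mathcal{A}_{n,u}$ is nowhere dense (closedness of $\mathcal{A}_{n,u}$ itself is neither needed nor claimed). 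The density argument is exactly your ``bumping'': approximate by a polytope, and if it has a double normal near $u$, glue a thin rectangle centered on it whose two diagonals give \emph{two} strictly maximizing double normals inside $\Bo(u,1/n)$; Lemma \ref{lem:stab} then stabilizes both under perturbation, so a whole neighbourhood misses $\mathcal{A}_{n,u}$. So your idea is the right one, but you stopped short of the $Z$-indexing and of noticing that only nowhere-density of the closure is required; with those two ingredients the argument closes.
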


\begin{proof}
Recall that a famous theorem of Brouwer assures that a compact metric space is
a Cantor set if and only if it is non-empty, totally disconnected, and
perfect. The compactness is clear from Lemma \ref{lem:USC}. The non-emptiness
follows from the fact that any metric diameter (\ie, longest chord) is, by
Lemma \ref{lem:locmax}, a double normal. Thus, it remains to prove the last
two properties, to which Lemmas \ref{lem:tot-disc} and \ref{lem:perfect} below
are devoted.
\end{proof}

A finite set $X\subset\E$ is said to be \emph{standard} if for any two
disjoint subsets $X_{1}$, $X_{2}$ with cardinality at most $d+1$, we have
\[
\dim\left(  \overrightarrow{X_{1}}\cap\overrightarrow{X_{2}}\right)
=\max\left(  0,\dim\left(  \overrightarrow{X_{1}}\right)  +\dim\left(
\overrightarrow{X_{2}}\right)  -d-1\right)  \text{.}%
\]
A polytope is said to be \emph{standard} if for any two facets $F$, $G$ that
does not have a common vertex, we have
\[
\dim\left(  \overrightarrow{F}\cap\overrightarrow{G}\right)  =\max\left(
0,\dim\left(  \overrightarrow{F}\right)  +\dim\left(  \overrightarrow
{G}\right)  -d-1\right)  \text{.}%
\]
Clearly, a polytope with a standard set of vertices is standard.

\begin{lem}
\label{lem:std-finite}If $K\in\bds$ is a standard polytope then $\B\left(
K\right)  $ is finite.
\end{lem}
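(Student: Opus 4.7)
My plan is to bin each double normal by the pair of minimal faces containing its two feet. Since $K$ has only finitely many faces, it suffices to show each such face-pair admits at most one double normal. For $b = (x, y) \in \B(K)$, let $F$, $G$ be the unique faces of $K$ with $x \in \operatorname{relint}(F)$ and $y \in \operatorname{relint}(G)$; the double-normal condition is equivalent to $y - x \perp \overrightarrow{F}$ and $y - x \perp \overrightarrow{G}$, i.e., $y - x \in V := (\overrightarrow{F} + \overrightarrow{G})^{\perp}$.

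The first key step is to show $F \cap G = \emptyset$. Indeed, if some $v \in F \cap G$ existed, then $x - v \in \overrightarrow{F}$ and $y - v \in \overrightarrow{G}$ would give $y - x \in \overrightarrow{F} + \overrightarrow{G}$; combined with $y - x \in V = (\overrightarrow{F} + \overrightarrow{G})^{\perp}$, this forces $y - x = 0$, contradicting $y \neq x$. With disjoint minimal faces, I then analyse $\phi : F \times G \to \R^{d+1}$, $\phi(x, y) = y - x$. Its image is an affine subspace of direction $\overrightarrow{F} + \overrightarrow{G}$, and this image meets $V$ in a single point $v^{*} := \pi_{V}(y_{0} - x_{0})$. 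Hence every double normal with minimal faces $(F, G)$ has $y - x = v^{*}$, and the corresponding set lies in the fibre $\phi^{-1}(v^{*})$, a convex subset of $F \times G$ of dimension $\dim(\overrightarrow{F} \cap \overrightarrow{G})$. The standard hypothesis (used most naturally via its standard-vertex-set formulation, which delivers the same dimension equality for every disjoint face-pair) yields $\dim(\overrightarrow{F} \cap \overrightarrow{G}) = \max(0, \dim F + \dim G - d - 1)$. If this is positive, then $\dim(\overrightarrow{F} + \overrightarrow{G}) = d + 1$, so $V = \{0\}$ and $y = x$, a contradiction; otherwise the fibre is a $0$-dimensional convex set, hence a single point, giving at most one double normal per face-pair.

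The main obstacle is the disjointness step: the dimension bookkeeping that follows is routine, but one must spot the decomposition $y - x = (y - v) - (x - v) \in \overrightarrow{F} + \overrightarrow{G}$ whenever $F \cap G \ni v$ and combine it with $y - x \perp \overrightarrow{F} + \overrightarrow{G}$ to eliminate shared feet.
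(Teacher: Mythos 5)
Your proof is correct and follows essentially the same strategy as the paper's: bin double normals by the pair of minimal faces containing their feet, establish that those faces are disjoint, and use the standard-dimension identity to show each face pair carries at most one double normal. The one genuine variation is in the disjointness step: the paper observes that $F$ and $G$ lie in two distinct parallel supporting hyperplanes, hence cannot share a vertex, and then compares the standard identity with the lower bound $\dim(\overrightarrow{F}\cap\overrightarrow{G})\geq\max(0,\dim\overrightarrow{F}+\dim\overrightarrow{G}-d)$ coming from both directions lying in a common $d$-dimensional space; you instead use that $y-x\in(\overrightarrow{F}+\overrightarrow{G})^{\perp}$ to rule out common points directly, and derive a contradiction from $\dim(\overrightarrow{F}\cap\overrightarrow{G})>0$ via $V=\{0\}$. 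Both routes use the same ingredients (orthogonality of the double normal to the face directions and the standard identity) and are of comparable length; your fibre analysis of $\phi^{-1}(v^{\ast})$ makes the uniqueness step more explicit than the paper's terse ``Hence $(x,y)$ is the only double normal,'' which is a small pedagogical gain. One cosmetic remark: the image of $\phi$ is not itself an affine subspace but a convex subset of one; the argument only needs that it lies in a translate of $\overrightarrow{F}+\overrightarrow{G}$, so this does not affect correctness.
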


\begin{proof}
Let $\left(  x,y\right)  \in\B\left(  K\right)  $ and $F_{x}$, $F_{y}$ be the
minimal dimensional facets containing $x$ and $y$ respectively. Clearly
$F_{x}$ and $F_{y}$ are included in two parallel supporting hyperplanes
$H_{x}$ and $H_{y}$, whence they cannot have a common vertex. On the one hand,
$K$ is standard, whence
\[
\dim\left(  \overrightarrow{F_{x}}\cap\overrightarrow{F_{y}}\right)
=\max\left(  0,\dim\left(  \overrightarrow{F_{x}}\right)  +\dim\left(
\overrightarrow{F_{y}}\right)  -d-1\right)  \text{.}%
\]
On the other hand, $\overrightarrow{F_{x}}$ and $\overrightarrow{F_{y}}$ are
subspaces of $\overrightarrow{H_{x}}=\overrightarrow{H_{y}}$, whence
\[
\dim\left(  \overrightarrow{F_{x}}\cap\overrightarrow{F_{y}}\right)  \geq
\max\left(  0,\dim\left(  \overrightarrow{F_{x}}\right)  +\dim\left(
\overrightarrow{F_{y}}\right)  -d\right)  \text{.}%
\]
It follows that $\dim\left(  \overrightarrow{F_{x}}\right)  +\dim\left(
\overrightarrow{F_{y}}\right)  \leq d$ and $\dim\left(  \overrightarrow{F_{x}%
}\cap\overrightarrow{F_{y}}\right)  =0$. Hence $\left(  x,y\right)  $ is the
only double normal whose extremities lie in minimal facets $F_{x}$ and $F_{y}%
$. We proved that, for any pair of facets, there is at most one double normal.
It follows that $\B\left(  K\right)  $ is finite.
\end{proof}

\begin{lem}
\label{lem:std-dense}The set of $n$-tuples $x\in\E^{n}$ such that
$x_{\mathbb{N}_{n}}$ is standard contains an open and dense set in $\E^{n}$.
\end{lem}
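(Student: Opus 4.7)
The plan is to exhibit an explicit open and dense subset $A \subseteq \E^n$ consisting of tuples in sufficiently general position, and then verify that every $x \in A$ yields a standard set $x_{\mathbb{N}_n}$. Concretely, I would define $A$ as the set of $x = (x_1, \ldots, x_n) \in \E^n$ such that, for every ordered pair $(I_1, I_2)$ of disjoint non-empty subsets of $\mathbb{N}_n$ with $|I_i| \leq d+1$, the three equalities
\begin{equation*}
\dim \overrightarrow{x_{I_1}} = |I_1| - 1, \quad \dim \overrightarrow{x_{I_2}} = |I_2| - 1, \quad \dim \bigl(\overrightarrow{x_{I_1}} + \overrightarrow{x_{I_2}}\bigr) = \min(|I_1| + |I_2| - 2,\, d+1)
\end{equation*}
hold simultaneously. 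The linear-algebraic identity $\dim(V_1 \cap V_2) = \dim V_1 + \dim V_2 - \dim(V_1 + V_2)$, applied with $V_i = \overrightarrow{x_{I_i}}$ to these three equalities, then immediately yields $\dim(\overrightarrow{x_{I_1}} \cap \overrightarrow{x_{I_2}}) = \max(0,\, \dim \overrightarrow{x_{I_1}} + \dim \overrightarrow{x_{I_2}} - d - 1)$, so $A$ is contained in the set of tuples $x$ for which $x_{\mathbb{N}_n}$ is standard.

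Next, I would verify that $A$ is open. Each of the three dimension equalities asserts that a specific rectangular matrix --- whose columns are difference vectors $x_j - x_{i_0}$ and whose entries therefore depend polynomially on the coordinates of $x$ --- attains maximal rank. Maximal rank is equivalent to the non-vanishing of at least one minor of appropriate size, which is an open condition in the Euclidean topology. Since there are only finitely many pairs $(I_1, I_2)$ to consider, the intersection $A$ remains open.

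For density, the complement $\E^n \setminus A$ is a finite union of real algebraic subvarieties of $\E^n$, each defined by the simultaneous vanishing of all minors of a prescribed size. Each such subvariety is nowhere dense provided that it is proper, for which it suffices to display a single $n$-tuple satisfying all rank conditions simultaneously. I would do this by placing $n$ generic points on the moment curve $\gamma(t) = (t, t^2, \ldots, t^{d+1})$: the classical Vandermonde argument ensures affine independence of any subset of cardinality at most $d+1$, while the minors arising in the third equality are non-trivial polynomials in the parameters $t_i$ and hence do not vanish for generic $(t_1, \ldots, t_n)$. The step I expect to be most delicate is precisely the verification that the mixed minors of the third equality are not identically zero in the $t_i$'s; a safer inductive alternative, should the explicit moment-curve calculation prove cumbersome, is to build the configuration point by point --- at each stage choosing $x_k \in \E$ outside the finitely many proper algebraic subvarieties on which some rank condition would drop.
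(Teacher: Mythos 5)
Your proposal is correct and takes essentially the same approach as the paper: both reduce the standard condition to maximal-rank (non-vanishing minor) conditions on the matrix of difference vectors, obtain openness from the polynomiality of the minors, and derive density from the fact that the complement is a proper algebraic subset. You make the $\dim(V_1+V_2)$ bookkeeping explicit and elaborate the non-emptiness step (moment curve or induction), whereas the paper encodes the same thing via $\mathrm{rank}\,M_{IJ}=\#I+\#J-2-\dim(\overrightarrow{x_I}\cap\overrightarrow{x_J})$ and dismisses non-emptiness as obvious.
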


\begin{proof}
First notice that the set $U\subset\E^{n}$ of all $n$-tuples $x$ such that for
any $I\subset\mathbb{N}_{n}$, $\dim\overrightarrow{x_{I}}=\min\left(
\#I-1,d+1\right)  $ (points in \emph{generic} position) is open and dense. We
have to prove that, for any non-empty disjoints subsets $I$, $J$ with
cardinality at most $d+1$, the set
\[
U_{I,J}\eqd\set(:x\in U|\dim\left(  \overrightarrow{x_{I}}\cap\overrightarrow
{x_{J}}\right)  =\max\left(  0,\#I+\#J-3-d\right)  :)
\]
is open and dense. Put $k\eqd\max\left(  0,\#I+\#J-d-3\right)  $. Note that
$\dim\overrightarrow{x_{I}}\cap\overrightarrow{x_{J}}$ is always greater than
or equal to $k$, and that $\mathrm{rank}\left(  M_{IJ}\right)  =\#I+\#J-2-\dim
\left(  \overrightarrow{x_{I}}\cap\overrightarrow{x_{J}}\right)  $, where
$M_{IJ}$ is a $\left(  d+1\right)  \times(\#I+\#J-2)$ matrix, whose columns
are vectors $x_{i}-x_{\min I}$ ($i\in I$, $i\neq\min I$) and $y_{j}-y_{\min
J}$ ($j\in J$, $j\neq\min J$). So $x\notin U_{IJ}$ if and only if
$\mathrm{rank}\left(  M_{IJ}\right)  <\#I+\#J-2-k$, that is, if all minors of
$M_{IJ}$ of order greater than or equal to $\#I+\#J-2-k$ vanish. Such minors
are polynomials on $\E^{n}$, whence $U_{IJ}$ is open, and dense if and only if
it is not empty. The latter fact being obvious, the proof is finished.
\end{proof}

\begin{lem}
\label{lem:tot-disc}For most $K\in\bds$, $\B\left(  K\right)  $ is totally disconnected.
\end{lem}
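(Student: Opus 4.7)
The plan is to use Brouwer's characterisation: a compact metric space is totally disconnected precisely when, for each $\varepsilon>0$, it admits a finite partition into clopen sets of diameter less than $\varepsilon$. Accordingly, for every positive integer $n$ I would let $\mathcal{O}_n\subset\bds$ consist of those bodies $K$ such that $\B(K)$ is contained in a finite union $\bigcup_{i=1}^k\Bo(c_i,r_i)$ of open balls of $\E^2$ whose closures $\Bc(c_i,r_i)$ are pairwise disjoint, with every $r_i<1/(2n)$, and show that each $\mathcal{O}_n$ is open and dense in $\bds$. Granting this, for any $K$ in the residual set $\bigcap_n\mathcal{O}_n$ the sets $\B(K)\cap\Bc(c_i,r_i)$ partition $\B(K)$ into clopen pieces of diameter $<1/n$ (closed trivially, and open because the disjointness of the closures together with $\B(K)\subset\bigcup_j\Bo(c_j,r_j)$ makes each one coincide with $\B(K)\setminus\bigcup_{j\neq i}\Bc(c_j,r_j)$), so every connected component of $\B(K)$ is a singleton.

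Openness of $\mathcal{O}_n$ at a body $K_0$ with witnesses $\Bc(c_i,r_i)$ is a standard compactness argument based on Lemma \ref{lem:USC}. If no neighbourhood of $K_0$ were contained in $\mathcal{O}_n$, I could find $K_m\to K_0$ and chords $b_m\in\B(K_m)\setminus\bigcup_i\Bo(c_i,r_i)$; the bodies $K_m$ being uniformly bounded for large $m$ would yield a subsequential limit $b\in\E^2\setminus\bigcup_i\Bo(c_i,r_i)$ (this set being open), while Lemma \ref{lem:USC} would force $b\in\B(K_0)$, contradicting $K_0\in\mathcal{O}_n$.

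Density is where Lemmas \ref{lem:std-finite}--\ref{lem:std-dense} enter. Given $K_0\in\bds$ and a neighbourhood $\mathcal{V}$, I would first approximate $K_0$ by a polytope $P_0=\conv(v_1,\ldots,v_N)\in\mathcal{V}$, then invoke Lemma \ref{lem:std-dense} to obtain a small perturbation $(v'_1,\ldots,v'_N)$ of $(v_1,\ldots,v_N)$ that forms a standard tuple. For a sufficiently small perturbation $P\eqd\conv(v'_1,\ldots,v'_N)$ still lies in $\mathcal{V}$ and still has $\{v'_1,\ldots,v'_N\}$ as its vertex set, so $P$ is a standard polytope; Lemma \ref{lem:std-finite} then makes $\B(P)$ finite, and its finitely many points can be enclosed in pairwise disjoint open balls of radius less than $1/(2n)$, witnessing $P\in\mathcal{V}\cap\mathcal{O}_n$. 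The only point requiring care is the insistence on disjoint \emph{closures} in the definition of $\mathcal{O}_n$, which is what makes each $\B(K)\cap\Bc(c_i,r_i)$ open in $\B(K)$ and costs nothing at the density step because $\B(P)$ is finite; beyond this bookkeeping the argument is a routine Baire-category combination of upper semi-continuity and polytope approximation.
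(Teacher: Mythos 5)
Your proof is correct and uses the same three lemmas (USC, std-finite, std-dense) and the same density mechanism as the paper's, but the packaging differs. The paper decomposes the \emph{bad} set as $\mathcal{A}=\bigcup_n\mathcal{A}_n$, where $\mathcal{A}_n$ consists of bodies $K$ whose $\B(K)$ contains a connected set of diameter at least $1/n$; closedness of $\mathcal{A}_n$ is then immediate from Lemma \ref{lem:USC} together with the classical fact that a Hausdorff limit of connected sets is connected, after which density of standard polytopes (Lemmas \ref{lem:std-dense} and \ref{lem:std-finite}) kills the interior. You instead build an open dense family $\mathcal{O}_n$ of bodies whose double normals are swallowed by a finite union of small balls with pairwise disjoint closures, and recover total disconnectedness from the resulting clopen partitions. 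This sidesteps the "limit of connected is connected" fact at the cost of a slightly heavier topological criterion; both routes are short and correct, and the density step is identical. One small slip: when you extract a subsequential limit $b$ of the $b_m$, you remark that $\E^2\setminus\bigcup_i\Bo(c_i,r_i)$ is open --- it is of course closed, which is exactly what you need to conclude $b$ stays outside the balls, so the argument goes through once the parenthetical is corrected.
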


\begin{proof}
We have
\begin{align*}
\mathcal{A}=  &  \set(:K\in\bds|\exists C\subset\B\left(  K\right)  ,\,C\text{
is connected},\,\diam\left(  C\right)  >0:)\\
=  &  \bigcup_{n\in\mathbb{N}^{*}}\set(:K\in\bds|\exists C\subset\B\left(
K\right)  ,\,C\text{ is connected},\,\diam\left(  C\right)  \geq\frac{1}%
{n}:)\\
\eqd  &  \bigcup_{n\in\mathbb{N}^{*}}\mathcal{A}_{n}\text{.}%
\end{align*}

It is well-known that a limit of connected sets is connected. Hence, by Lemma
\ref{lem:USC}, $\mathcal{A}_{n}$ is closed. By virtue of Lemma
\ref{lem:std-dense}, standard polytopes are dense in $\bds$ , and by Lemma
\ref{lem:std-finite}, they cannot belong to $\mathcal{A}_{n}$. Hence
$\mathcal{A}_{n}$ has empty interior, and thus $\mathcal{A}$ is meagre.
\end{proof}

\begin{lem}
\label{lem:perfect}For most $K\in\bds$, $\B\left(  K\right)  $ is perfect.
\end{lem}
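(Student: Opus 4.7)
The plan is to show the set $\mathcal{I} = \{K \in \bds : \B(K) \text{ has an isolated point}\}$ is meagre. Decompose $\mathcal{I} = \bigcup_{n \in \mathbb{N}^{*}} \mathcal{I}_n$ where
\[
\mathcal{I}_n = \{K \in \bds : \exists b \in \B(K),\ \B(K) \cap \Bo(b, 1/n) = \{b\}\}.
\]
Note $\mathcal{I}_n$ is itself dense in $\bds$, since every standard polytope has finite $\B$ (Lemma \ref{lem:std-finite}) and such polytopes are dense by Lemma \ref{lem:std-dense}; hence the direct ``closed and nowhere dense'' argument used in Lemma \ref{lem:tot-disc} is unavailable. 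Instead, I propose showing the stronger condition
\[
\tilde{\mathcal{P}}_n = \{K \in \bds : \forall b \in \B(K),\ \exists b' \in \MS(K),\ 0 < d(b, b') < 1/n\}
\]
defines a dense open subset of $\bds$. Since $\MS(K) \subseteq \B(K)$ by Lemma \ref{lem:locmax}, the intersection $\bigcap_n \tilde{\mathcal{P}}_n$ is a dense $G_\delta$ consisting of bodies for which every double normal is a limit of distinct strictly maximizing chords, hence bodies with $\B(K)$ perfect.

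For openness, I would argue by contradiction. Assume $K \in \tilde{\mathcal{P}}_n$ and $K_k \to K$ with $K_k \notin \tilde{\mathcal{P}}_n$. Pick $b_k \in \B(K_k)$ such that $\MS(K_k) \cap \Bo(b_k, 1/n) \subseteq \{b_k\}$, and extract via Lemma \ref{lem:USC} a subsequence with $b_k \to b \in \B(K)$. Using $K \in \tilde{\mathcal{P}}_n$, choose $b' \in \MS(K)$ with $0 < d(b, b') < 1/n$ and apply Lemma \ref{lem:stab} to obtain a maximizing chord $b'_k \in \M(K_k)$ converging to $b'$. For large $k$, $b'_k \in \Bo(b_k, 1/n) \setminus \{b_k\}$; if in addition one can ensure $b'_k \in \MS(K_k)$, the choice of $b_k$ is contradicted.

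For density, given $K \in \bds$ and $\varepsilon > 0$, I would approximate $K$ by a smooth strictly convex body and attach a $1/(2n)$-dense collection of pairwise disjoint, symmetrically placed pairs of small smooth bumps on opposite sides. Each bump pair contributes exactly one strictly maximizing chord (verified via Lemma \ref{lem:strict-max-criterion}), and the perturbations are arranged so that a strict maximizer sits within $1/n$ of every double normal of the perturbed body.

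The main obstacle is two-fold. First, in the openness step, upgrading $b'_k$ from $\M(K_k)$ to $\MS(K_k)$ requires refining the proof of Lemma \ref{lem:stab}: the strictness margin of $b' \in \MS(K)$ should propagate to $K_k$, but maximizing chords can cluster at a common length after perturbation, potentially losing strictness; a careful choice of the radius $r$ in that proof (smaller than the strict-max neighborhood of $b'$) combined with the uniform margin against boundary values should force uniqueness, hence strictness, of the global maximum on $\Bc(b',r) \cap \C(K_k)$. Second, the density construction must globally control all double normals of the perturbed body, including those inherited from the base approximation or arising from cross-bump interactions; this requires the bumps to be small and sufficiently separated. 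I expect the openness refinement to be the sharper technical point.
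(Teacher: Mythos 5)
Your approach is genuinely different from the paper's, and I think it has a real gap that you correctly sense but do not close. The paper does not try to show that the ``good'' set is open and dense. Instead it covers the ``bad'' set by the family $\mathcal{A}_{n,u}=\{K:\#(\B(K)\cap\Bc(u,1/n))=1\}$, where $u$ ranges over a \emph{fixed} countable dense subset $Z$ of $\E^{2}$. Anchoring the ball at a fixed $u$ (rather than at a moving $b\in\B(K)$) is exactly what neutralizes the difficulty you correctly observe (that ``$\exists$ isolated $b$ at scale $1/n$'' describes a dense set). The paper then shows each $\mathcal{A}_{n,u}$ has nowhere dense closure: either a nearby polytope has no double normal in $\Bc(u,1/n)$ (robust by Lemma \ref{lem:USC}), or one attaches a single thin rectangle near $u$ to create \emph{two} strict local maxima in $\Bo(u,1/n)$, and Lemma \ref{lem:stab} turns these into two elements of $\M(K')\subseteq\B(K')$ for all nearby $K'$. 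Crucially, after applying Lemma \ref{lem:stab}, the paper only needs the conclusion $\M(K')\subseteq\B(K')$ (Lemma \ref{lem:locmax}); it never needs the surviving chords to be \emph{strict} maxima of $K'$.

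That last point is exactly where your argument breaks. You define $\tilde{\mathcal{P}}_n$ using $\MS(K)$, and in the openness step you need to produce $b'_k\in\MS(K_k)$, whereas Lemma \ref{lem:stab} only delivers $b'_k\in\M(K_k)$. You flag this and propose refining the stability lemma so that the strictness margin of $b'\in\MS(K)$ persists; but that is not a technical nuisance, it is false in general. A strict local maximum of $\lgt_K$ can dissolve under arbitrarily small perturbation into a plateau of equal-length local maxima, none of which is strict (e.g.\ if the perturbed boundary is locally flat on both sides, producing a continuum of double normals of a common length). So $\tilde{\mathcal{P}}_n$ need not be open, and the proposed $G_\delta$ fails to be a $G_\delta$. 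Replacing $\MS$ by $\M$ in the definition does not help either, because then you lose the hypothesis needed to invoke Lemma \ref{lem:stab} on $K$ itself; and Corollary \ref{cor:MMS} ($\M=\MS$ generically) is proved only later, after this lemma. Separately, your density step is considerably more demanding than the paper's: you must place strict maxima within $1/n$ of \emph{every} double normal of the approximant simultaneously, controlling both inherited double normals and cross-bump interactions, whereas the paper's rectangle trick only needs to control what happens inside a single fixed ball $\Bo(u,1/n)$. I would recommend switching to the paper's localization-by-a-countable-dense-set device rather than trying to repair the openness of $\tilde{\mathcal{P}}_n$.
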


\begin{proof}
Chose any countable dense set $Z$ in $\E^{2}$. The assumption that $\B\left(
K\right)  $ is not perfect implies that there exit $b\in\B\left(  K\right)  $,
$r>0$, $u\in Z$ such that
\[
\B\left(  K\right)  \cap\Bc\left(  u,r\right)  =\left\{  b\right\}  \text{.}%
\]

We have
\[
\mathcal{A}\eqd\set(:K\in\bds|\B\left(  K\right)  \,\text{not perfect}%
:)\subset\bigcup_{\left(  n,u\right)  \in\mathbb{N}^{\ast}\times Z}%
\mathcal{A}_{n,u}%
\]
with%

\begin{align*}
\mathcal{A}_{n,u}=  &  \set(:K\in\bds|\exists b\in\B\left(  K\right)
\,\text{s.t. }\B\left(  K\right)  \cap\Bc\left(  u,\frac{1}{n}\right)
=\left\{  b\right\}  :)\\
=  &  \set(:K\in\bds|\#\left(  \B\left(  K\right)  \cap\Bc\left(  u,\frac
{1}{n}\right)  \right)  =1:)
\end{align*}
We have to prove that the closure of $\mathcal{A}_{n,u}$ has empty interior,
that is, for any $K_{0}\in\bds$ and any $\varepsilon>0$ there exists $K_{3}%
\in\bds$ such that $d_{PH}\left(  K,K_{3}\right)  <\varepsilon$ and such that
a whole neighbourhood of $K_{3}$ does not intersect $\mathcal{A}_{n,u}$.

First, we can find a polytope $K_{1}$ such that $d_{PH}\left(  K_{0}%
,K_{1}\right)  <\varepsilon$. If $\B\left(  K_{1}\right)  \cap\Bc\left(
u,\frac{1}{n}\right)  $ is empty then the set will remain empty for any $K$ in
a whole neighbourhood of $K_{1}$, because otherwise the limit of a converging
subsequence of double normals of $K$ tending to $K_{1}$ would belong to
$\Bc\left(  u,\frac{1}{n}\right)  $. Hence we can set $K_{3}=K_{1}$ and the
proof is finished.

If $\B\left(  K_{1}\right)  \cap\Bc\left(  u,\frac{1}{n}\right)  $ is not
empty then we can move and dilate slightly $K_{1}$ such that the modified
polytope $K_{2}$ satisfies $d_{PH}\left(  K_{0},K_{2}\right)  <\varepsilon$
and $\B\left(  K_{2}\right)  \cap\Bo\left(  u,\frac{1}{n}\right)
\neq\emptyset$. Let $b_{2}$ belong to $\B\left(  K_{2}\right)  \cap\Bo\left(
u,\frac{1}{n}\right)  $.
Consider a rectangle $R=x_{3}x_{3}^{\prime}y_{3}y_{3}^{\prime}$ whose centre
is the midpoint of $b_{2}$, such that $x_{3}y_{3}^{\prime}$ is parallel to
$b_{2}$, longer than $\lgt(b_{2})$. If it is not too long nor too wide, then
$(x_{3},y_{3})$ and $(x_{3}^{\prime},y_{3}^{\prime})$ belong to $\Bo\left(
u,\frac{1}{n}\right)  $
and the distance from $K_{3}\eqd\conv\left(  K_{2}\cup R\right)  $ to $K_{0}$
is less than $\varepsilon$. Still reducing the width $x_{3}x_{3}^{\prime}$ if
necessary, we may assume that the hyperplanes normal to the diagonals of $R$
through their extremities does not intersect $K_{2}$, whence those hyperplanes
are supporting $K_{3}$, and $\left(  x_{3},y_{3}\right)  $ and $\left(
x_{3}^{\prime},y_{3}^{\prime}\right)  $ are double normal of $K_{3}$. Also,
one can easily check that any segment between $x_{3}$ (respectively $y_{3}$)
and any point of $K_{3}$ make an angle less than $\pi/2$, whence $\left(
x_{3},y_{3}\right)  \in\MS(K_{3})$. Of course, the same holds for $\left(
x_{3}^{\prime},\text{$y_{3}^{\prime}$}\right)  $. Now, by Lemma \ref{lem:stab}%
, there is a whole neighbourhood $U$ of $K_{3}$ such that any $K\in U$ admits
at least two double normals in $\Bo\left(  u,\frac{1}{n}\right)  $, hence $U$
does not intersect $\mathcal{A}_{n,u}$.
\end{proof}


\section{Dimensions\label{sec:dim}}

In this section, we prove that for most convex bodies $K$ the lower
box-counting dimension of $\BF\left(  K\right)  $ is $0$ and its packing
dimension is $d$. Let us recall their definitions.

If $A$ is a metric space and $\delta$ is a positive number, a subset $F\subset
A$ is called a \emph{$\delta$-set} if any two distinct points of $F$ have a
distance at least $\delta$. Let's denote by $P_{\delta}(A)$ the supremum of
the cardinals of all $\delta$-sets of $A$. The \emph{lower} and \emph{upper
box-counting dimension} of $A$ are defined as
\begin{align*}
\underline{\dim}_{B}A=  &  \liminf_{\delta\rightarrow0}\frac{\ln P_{\delta
}(A)}{-\ln\delta}\\
\overline{\dim}_{B}A=  &  \limsup_{\delta\rightarrow0}\frac{\ln P_{\delta}%
(A)}{-\ln\delta}\text{.}%
\end{align*}
It is well-known that the lower packing dimension is greater than or equal to
the Hausdorff dimension \cite{falc}.

The fact that a compact countable set may have arbitrarily large box-dimension
leads to the definition of the so-called \emph{packing dimension}:%
\[
\dim_{P}A=\inf_{\left\{  A_{i}\right\}  _{i\in\mathbb{N}}}\sup_{i\in
\mathbb{N}}\overline{\dim}_{B}A_{i}\text{,}%
\]
where the infimum is taken over all the coverings $\left\{  A_{i}\right\}
_{i\in\mathbb{N}}$ of $A$. It is clear that this dimension is lower than or
equal to the upper box dimension, and vanishes for any countable set. There
also exists a similar dimension derived from the lower box-counting dimension,
but we shall not use it in this paper. Note that, classically, the packing
dimension is defined in a completely different way, involving outer measures.
See \cite[3.3 and 3.4]{falc} or \cite[Theorem 5.9]{Mattila} for the original
definition and the equivalence between those definitions.

It is easy to see that for any subset $A$ of $\E$, $P_{\delta}\left(
\overline{A}\right)  =P_{\delta}\left(  A\right)  $ and thus $\overline{\dim
}_{B}\overline{A}=\overline{\dim}_{B}A$. This fact, together with Baire's
theorem leads to the following lemma.

\begin{lem}
\label{lem:dimP-Homo}Let $s$ be a positive number. If $A$ is a complete metric
space in which any open set has upper counting-box dimension at least $s$,
then $\dim_{P}A\geq s$.
\end{lem}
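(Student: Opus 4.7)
The plan is to argue directly from the definition of packing dimension using Baire's theorem. Fix any countable covering $\{A_i\}_{i\in\mathbb{N}}$ of $A$; I need to produce some index $i$ with $\overline{\dim}_B A_i \geq s$, and then take the infimum over all such coverings.

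First, I would pass from $\{A_i\}$ to the closed covering $\{\overline{A_i}\}$. Since $A = \bigcup_{i} \overline{A_i}$ and $A$ is a complete metric space, Baire's theorem provides some index $i_{0}$ for which $\overline{A_{i_{0}}}$ has non-empty interior in $A$. Choose a non-empty open set $U \subseteq \overline{A_{i_{0}}}$ of $A$.

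Next, I would use monotonicity of the upper box-counting dimension together with the fact, recorded just before the statement of the lemma, that $\overline{\dim}_B \overline{A_{i_0}} = \overline{\dim}_B A_{i_0}$. Since $U$ is open in $A$, the hypothesis gives $\overline{\dim}_B U \geq s$, and since $U \subseteq \overline{A_{i_0}}$ we have $P_{\delta}(U) \leq P_{\delta}(\overline{A_{i_0}})$ for every $\delta>0$, hence
\[
s \;\leq\; \overline{\dim}_B U \;\leq\; \overline{\dim}_B \overline{A_{i_0}} \;=\; \overline{\dim}_B A_{i_0}.
\]
Therefore $\sup_{i} \overline{\dim}_B A_i \geq s$. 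Taking the infimum over all countable coverings yields $\dim_P A \geq s$.

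There is really no hard step here: the whole argument is a one-line application of Baire's theorem combined with the stability of $\overline{\dim}_B$ under closure and monotonicity. The only care needed is to ensure the Baire argument is applied to $A$ itself (so that the open set $U$ obtained is open in $A$ and the hypothesis applies to it), rather than to some ambient space.
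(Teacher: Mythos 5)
Your proof is correct and is exactly the argument the paper has in mind: the authors do not write out a proof, but the sentence immediately preceding the lemma ("\,$\overline{\dim}_B\overline{A}=\overline{\dim}_B A$\dots\ together with Baire's theorem leads to the following lemma") is precisely the combination you spell out. Your added caveat about taking closures and interiors relative to $A$ itself, so that the hypothesis on open sets applies, is the right point to be careful about.
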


It follows that $\overline{\dim}_{B}A=\dim_{P}A$ whenever $A$ is complete and
enjoys some kind of homogeneity, as can be expected for the set of double
normals of a typical convex body.

\begin{thm}
\label{T5}For most $K\in\bds$, the lower box-counting dimension of $\B\left(
K\right)  $ is $0$.
\end{thm}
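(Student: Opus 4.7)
The plan is to write the set of $K$ with $\underline{\dim}_{B}\B(K)=0$ as a countable intersection of open dense subsets of $\bds$. For each $s>0$ and each $m\in\mathbb{N}^{*}$, let
\[
\mathcal{B}_{s,m}\eqd\set(:K\in\bds\,|\,\exists\,\delta\in(0,1/m]\text{ with }P_{\delta}(\B(K))<\delta^{-s}:).
\]
Any $K\in\bigcap_{k,m\in\mathbb{N}^{*}}\mathcal{B}_{1/k,m}$ satisfies $\underline{\dim}_{B}\B(K)\leq 1/k$ for every $k$, hence $\underline{\dim}_{B}\B(K)=0$, so it suffices to prove that each $\mathcal{B}_{s,m}$ is open and dense in $\bds$.

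For openness I would first verify that, for every fixed $\delta>0$, the integer-valued map $f_{\delta}:K\mapsto P_{\delta}(\B(K))$ is upper semicontinuous on $\bds$. Indeed, if $K_{n}\to K$ and each $\B(K_{n})$ carries $N$ double normals pairwise at distance at least $\delta$, the boundaries $\bd K_{n}$ eventually all lie in a fixed compact region of $\E$, so a diagonal extraction yields $N$ subsequences of these chords converging in $\E^{2}$; their limits belong to $\B(K)$ by Lemma \ref{lem:USC} and remain pairwise at distance at least $\delta$. Thus $f_{\delta}(K)\geq N$, which gives upper semicontinuity. Consequently $\{K:f_{\delta}(K)<\delta^{-s}\}$ is open for every $\delta$, and $\mathcal{B}_{s,m}$, being the union of these open sets over $\delta\in(0,1/m]$, is itself open.

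For density, given $K_{0}\in\bds$ and $\varepsilon>0$, I would invoke Lemma \ref{lem:std-dense} to produce a polytope $P$ with standard vertex set satisfying $d_{PH}(K_{0},P)<\varepsilon$ (this is automatic since polytopes are dense in $\bds$ and the lemma guarantees one may perturb the vertex tuple into standard position). By Lemma \ref{lem:std-finite}, $N\eqd\#\B(P)$ is finite, so $f_{\delta}(P)\leq N$ for every $\delta>0$; choosing any $\delta\in(0,1/m]$ with $\delta^{-s}>N$ then certifies $P\in\mathcal{B}_{s,m}$, and density follows.

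The main obstacle is the upper semicontinuity of $f_{\delta}$, which relies on the compactness of the ambient region (to extract the $N$ convergent subsequences simultaneously) and on Lemma \ref{lem:USC} (to guarantee that the limiting chords are still double normals of $K$). Everything else reduces to routine bookkeeping: interpreting the definition of $\underline{\dim}_{B}$, combining finitely-many-double-normals for standard polytopes with the fact that $\delta^{-s}\to\infty$ as $\delta\to 0$, and invoking the Baire property of $\bds$ on the countable intersection $\bigcap_{k,m}\mathcal{B}_{1/k,m}$.
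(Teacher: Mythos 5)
Your proposal is correct and takes essentially the same approach as the paper: your sets $\mathcal{B}_{1/n,m}$ are exactly the complements in $\bds$ of the paper's $\mathcal{A}_{n,m}$, your openness argument via upper semicontinuity of $K\mapsto P_{\delta}(\B(K))$ (using Lemma \ref{lem:USC} and a diagonal extraction) matches the paper's proof that $\mathcal{A}_{n,m}$ is closed, and your density argument via standard polytopes (Lemmas \ref{lem:std-dense} and \ref{lem:std-finite}) is the paper's empty-interior argument. You have merely phrased the proof from the residual side rather than the meager side, which is a trivial duality.
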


Using an general result of Gruber \cite[p. 20]{Grub2}, the proof of the
theorem almost completely reduces to the upper semi-continuity of the maps
$K\mapsto\B\left(  K\right)  $ (Lemma \ref{lem:USC}) and $A\mapsto P_{\delta
}\left(  A\right)  $ (\cite[p. 20]{Grub2}). However, in order to make the
paper more self-contained, we choose to give a more geometrical, direct proof.

\begin{proof}
Define%
\begin{align*}
\mathcal{A}  &  \overset{\mathrm{def}}{=}\set(:K\in\bds|\liminf\frac{\log
P_{\delta}\left(  \B\left(  K\right)  \right)  }{-\log\delta}>0:)\\
&  =\bigcup_{n}\set(:K\in\bds|\liminf\frac{\log P_{\delta}\left(  \B\left(
K\right)  \right)  }{-\log\delta}\geq\frac{1}{n}:)=\bigcup_{n,m}%
\mathcal{A}_{n,m}\text{,}%
\end{align*}
where%
\[
\mathcal{A}_{n,m}=\set(:K\in\mathcal{K}|\forall\delta\leq\frac{1}{m}%
~\frac{\log P_{\delta}\left(  \B\left(  K\right)  \right)  }{-\log\delta}%
\geq\frac{1}{n}:)\text{.}%
\]

We first prove that $\mathcal{A}_{n,m}$ is closed. Let $K_{p}\in
\mathcal{A}_{n,m}$ tend to $K\in\bds$. Let us fix $\delta\leq1/m$; we want to
prove that%
\[
P_{\delta}\left(  \B\left(  K\right)  \right)  \geq\delta^{-1/n}\text{.}%
\]

Since $K_{p}\in\mathcal{A}_{n,m}$ we have $P_{\delta}\left(  \B\left(
K_{p}\right)  \right)  \geq\delta^{-1/n}$. So there are $N\overset
{\mathrm{def}}{=}\left\lceil \delta^{-1/n}\right\rceil $ double normals
$b_{p}^{1}$, \ldots, $b_{p}^{N}$ in $K_{p}$ forming a $\delta$-set. By
extraction, one can assume the convergence of each sequence $\left\{
b_{p}^{i}\right\}  _{p}$ ($i\in\mathbb{N}_{N}$) to some limit $b^{i}%
\in\B\left(  K\right)  $ (by Lemma \ref{lem:USC}). Obviously $\left\{
b^{i}|i\in\mathbb{N}_{N}\right\}  $ is a $\delta$-set of double normals. So
$P_{\delta}\left(  \B\left(  K\right)  \right)  \geq\delta^{-1/n}$, and
$K\in\mathcal{A}_{n,m}$.

Clearly, if $\B\left(  K^{\prime}\right)  $ is finite for some $K^{\prime}%
\in\bds$ then $K^{\prime}$ does not belongs to $\mathcal{A}_{nm}$. Hence, by
Lemmas 7 and 8, $\mathcal{A}_{n,m}$ has empty interior and $\mathcal{A}$ is meagre.
\end{proof}

\begin{lem}
\label{lem:spher-cap}For any $K\in\bds$, any $\left(  x,y\right)  \in\B\left(
K\right)  $ and any $\varepsilon>0$ there exist $K^{\prime}\in\bds$, $o\in\E$,
$R>0$ such that $d_{PH}\left(  K,K^{\prime}\right)  <$ $\varepsilon$ and
$\IS(o,R)\cap\bd K^{\prime}$ contains two spherical caps symmetrical to each
other with respect to $o$, one of them included in $\Bo\left(  x,\varepsilon
\right)  $.
\end{lem}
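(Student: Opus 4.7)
The plan is to perturb $K$ by attaching two antipodal spherical bulges at the feet $x, y$ of the double normal. I set $o = (x+y)/2$, $e = (y-x)/\|y-x\|$, and $R = \|y-x\|/2 + \eta$ for a small parameter $\eta > 0$ to be tuned at the end; since $(x, y)$ is a double normal, $K$ is sandwiched between the parallel supporting hyperplanes $H_x, H_y$ orthogonal to $e$, and the ball $\Bc(o, R)$ protrudes by $\eta$ past each of them. I then form $C_x = \Bc(o, R) \cap \{z : (z-x)\cdot e \leq 0\}$ and $C_y = \Bc(o, R) \cap \{z : (z-y)\cdot e \geq 0\}$, the two portions of the ball lying outside the slab containing $K$, and define $K' = \conv(K \cup C_x \cup C_y)$.

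The crucial and most delicate step is to check that the spherical parts of $\bd C_x$ and $\bd C_y$ are not absorbed into the interior of $K'$ by the convex-hull operation. My plan is a supporting-hyperplane argument, local to each pole of $\IS(o, R)$. At the south pole $q_0 = x - \eta e$, the tangent hyperplane $T_{q_0}$ to $\Bc(o, R)$ is the shift of $H_x$ outward by $\eta$; since $K \subset H_x^+$ and $\Bc(o, R) \subset T_{q_0}^+$, both $K$ and $\Bc(o, R) \supset C_x \cup C_y$ lie in $T_{q_0}^+$, so $K' \subset T_{q_0}^+$ and $q_0 \in K' \cap T_{q_0}$, yielding $q_0 \in \bd K'$. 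By continuity of the outward normal $(q-o)/R$ along $\IS(o, R)$, the very same argument works for any $q$ whose normal is within distance of order $\eta/\diam K$ of $-e$: the $\eta$-margin at the pole absorbs the tilt, so $K \subset T_q^+$ persists and $q \in \bd K'$. This yields an open spherical cap of $\IS(o, R)$ on $\bd K'$ around $q_0$; the mirror-image reasoning at the north pole $y + \eta e$ yields the antipodal cap, and by construction the two are symmetric about $o$.

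It remains to tune $\eta$. A direct computation gives $C_x \subset \Bc(x, \sqrt{\eta\|y-x\| + \eta^2})$, so the cap around $q_0$ lies in $\Bo(x, \varepsilon)$ once $\eta$ is small enough. For the Hausdorff bound, $K \subset K'$ is immediate, and given any $z \in K' = \conv(K \cup C_x \cup C_y)$, writing $z = \lambda_K p + \lambda_x p_x + \lambda_y p_y$ with $p \in K$, $p_x \in C_x$, $p_y \in C_y$ and comparing to $\lambda_K p + \lambda_x x + \lambda_y y \in K$ gives $d(z, K) \leq \sqrt{\eta\|y-x\| + \eta^2}$, so $d_{PH}(K, K') < \varepsilon$ also holds for small $\eta$.

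The main obstacle I anticipate is the argument in the second paragraph. It is not a priori clear that the spherical caps of $C_x$ and $C_y$ survive the convex hull: the convex combination of a point of $K$ strictly above $H_x$ with a point of $C_x$ strictly below $H_x$ can land below $H_x$ yet outside $C_x$, producing an overhang of $K'$ that covers the equatorial part of the sphere. My construction sidesteps this by requiring only a small cap around each pole to remain on $\bd K'$, rather than the whole of $\IS(o, R) \cap \{(z-x)\cdot e \leq 0\}$; near the poles the $\eta$-margin in the support condition dominates any such distortion, which is all the lemma needs.
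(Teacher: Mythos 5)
Your construction of $K'=\conv(K\cup C_x\cup C_y)$ is identical to the paper's ($C_x\cup C_y=\Bc(o,R)\setminus\Delta$, $\Delta$ the open slab between the two supporting hyperplanes), and your bounds for the Hausdorff distance and the inclusion $C_x\subset\Bc(x,\sqrt{\eta\|y-x\|+\eta^2})$ are routine and correct. Where you genuinely diverge is in the key step of showing that a cap of $\IS(o,R)$ near each pole survives on $\bd K'$. The paper argues by contradiction and compactness: assuming a sequence $p_n\in\IS(o,R)\to p^+$ of points interior to segments $a_n b_n$ with $a_n\in C_x$ and $b_n\in\conv(K\cup C_y)$, it extracts limits, observes $a_n\to p^+$, and derives a contradiction from the angle $\measuredangle b_n a_n o\to\pi/2$. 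You instead give a direct supporting-hyperplane argument: $K$ lies in the half-space $H_x^+$ with a strict margin $\eta$ beyond the tangent plane $T_{q_0}$ at the pole $q_0=x-\eta e$, the ball $\Bc(o,R)$ lies in $T_q^+$ for every $q\in\IS(o,R)$ by tangency, and the margin $\eta$ absorbs the tilt as $q$ ranges over a cap of angular radius of order $\eta/\diam K$ around the pole. A short computation (writing $(z-q)\cdot n_q=(z-x)\cdot(-e)+(z-x)\cdot(n_q+e)+(x-q)\cdot n_q\leq 0+(\diam K)\delta-\eta+(\|y-x\|/2)\delta$, with $\delta=\|n_q+e\|$) confirms $K'\subset T_q^+$ and hence $q\in\bd K'\cap\IS(o,R)$ exactly when $\delta\lesssim\eta/\diam K$, so your claim holds; one must also check, as you implicitly do, that such $q$ remain in $C_x$, which follows since $(q-x)\cdot e=-\eta+O(R\delta^2)<0$. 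Your route is more elementary and fully quantitative — it produces an explicit angular radius for the cap — while the paper's is shorter to write but relies on sequence extraction and gives no effective bound. Both establish the same lemma.
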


\begin{proof}
Let $o$ be the midpoint of $xy$ and $\Delta$ the open subset of $\E$ bounded
by the two hyperplanes through $x$ and $y$, normal to $x-y$.

We choose $R>\Vert x-y\Vert/2$ small enough to ensure that
\[
K^{\prime}\eqd\conv(K\cup(\Bc(o,R)\setminus\Delta))
\]
satisfies $d_{PH}(K_{0},K_{1})<\varepsilon$. It remains to prove that a whole
neighbourhood of the poles $p^{+}\overset{\mathrm{def}}{=}o+\frac
{R}{\left\Vert o-x\right\Vert }\left(  x-o\right)  $ and $p^{-}\overset
{\mathrm{def}}{=}2o-p$ is included in $\IS(o,R)$. Let $B^{\pm}$ be the
connected component of $\Bc(o,R)\setminus\Delta$ that contains $p^{\pm}$.
Assume that there exists $p_{n}\in\IS(o,R)$, tending to $p$, and interior to
some line-segment $a_{n}b_{n}$ with $a_{n}\in B^{+}$ and $b_{n}\in\conv\left(
K\cup B^{-}\right)  $. Passing if necessary to a subsequence, we may assume
that $b_{n}$ converges to $b\in\conv\left(  K\cup B^{-}\right)  $. The
hyperplane $H_{n}$ through $p_{n}$ and normal to $\left(  x-y\right)  $
separates $a_{n}$ and $b_{n}$, and the connected component of $B^{+}\setminus
H_{n}$ containing $p$ tends to $\left\{  p\right\}  $, whence $a_{n}%
\rightarrow p$. Since $\left\Vert p_{n}-a_{n}\right\Vert \rightarrow0$,
$\measuredangle b_{n}a_{n}o\rightarrow\pi/2$. It follows that $b$ should
belong to the hyperplane through $p$ normal to $x-y$, and we get a contradiction.

Of course, the same proof holds for $p^{-}$.
\end{proof}

\begin{lem}
\label{lem:make-max-strict}Let $K$ be a convex body in $\E$ and $b_{1}$,
\ldots, $b_{n}\in\B(K)$ be $n$ double normals. Assume that each foot of
$b_{i}$ ($i\in\mathbb{N}_{n}$) admits a neighbourhood in $\bd K$ which does
not contain any line-segment. Then there exists a sequence $K_{p}\in\bds$
tending to $K$ when $p$ tends to $\infty$, such that $b_{1}$, \ldots$b_{n}$
belong to $\MS(K_{p})$ for any $p$.
\end{lem}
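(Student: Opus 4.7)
The plan is to construct $K_p$ by intersecting $K$ with finitely many convex cones, one localized at each foot, so that the feet become cone-like vertices where Lemma~\ref{lem:strict-max-criterion} applies uniformly. For each $i\in\mathbb{N}_n$, each $z\in\{x_i,y_i\}$ with opposite foot $w$ of $b_i$, and each $\alpha\in(0,\pi/2)$, I would take
\[
\mathcal{C}_{i,z,\alpha}\eqd\bigl\{v\in\E:\langle v-z,w-z\rangle\geq\|v-z\|\,\|w-z\|\cos\alpha\bigr\},
\]
the closed convex cone with apex $z$, axis $w-z$, and half-angle $\alpha$, and set
\[
K_{p}\eqd K\cap\bigcap_{i=1}^{n}\bigl(\mathcal{C}_{i,x_{i},\alpha_{p}}\cap\mathcal{C}_{i,y_{i},\alpha_{p}}\bigr)
\]
for a sequence $\alpha_{p}\nearrow\pi/2$ to be chosen.

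The essential observation is that the no-line-segment hypothesis forces $H_z\cap K=\{z\}$ for each foot $z$ (where $H_z$ denotes the supporting hyperplane of $K$ at $z$ perpendicular to $b_i$): indeed $H_z\cap K$ is a face of $K$ containing $z$, and any extra point would produce a line segment in $\bd K$ near $z$. Fixing $v_0\in\inn K$, this implies that for any two distinct elements $z'\neq z$ of the finite list $\{v_0,x_1,y_1,\ldots,x_n,y_n\}$ and any axis $w-z$ at $z$, the angle $\measuredangle(z'-z,w-z)$ is strictly less than $\pi/2$ (it is at most $\pi/2$ since $K\subset H_z^{+}$, and equality would put $z'$ in $H_z\cap K=\{z\}$). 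I would take $\alpha_p\nearrow\pi/2$ with $\alpha_p$ exceeding the maximum $\alpha^{*}<\pi/2$ of these finitely many angles. Then $v_0$ and every $x_j,y_j$ sits in every cone, which gives $K_p\in\bds$ (a neighbourhood of $v_0$ remains in the interior), containment of all feet in $K_p$, and, from $K_p\subset\mathcal{C}_{i,x_i,\alpha_p}\cap\mathcal{C}_{i,y_i,\alpha_p}\subset H_{x_i}^{+}\cap H_{y_i}^{+}$, the double-normal property $b_i\in\B(K_p)$. Moreover $K_p\subset\mathcal{C}_{i,x_i,\alpha_p}$ yields $\measuredangle y_ix_iv\leq\alpha_p<\pi/2$ and symmetrically $\measuredangle x_iy_iv\leq\alpha_p<\pi/2$ uniformly for $v\in K_p$, so Lemma~\ref{lem:strict-max-criterion} delivers $b_i\in\MS(K_p)$.

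For convergence, $K_p\subset K$ is clear; for the reverse direction I would use the modulus
\[
\omega(\delta)\eqd\max_z\sup\bigl\{\|v-z\|:v\in K,\ \measuredangle(v-z,w-z)\geq\tfrac{\pi}{2}-\delta\bigr\},
\]
where the maximum ranges over the finitely many (foot, axis) pairs. A compactness argument combined with $H_z\cap K=\{z\}$ shows $\omega(\delta)\to 0$ as $\delta\to 0$: any subsequential limit of maximizing $v$'s would lie in $H_z\cap K$, hence equal $z$, contradicting a positive distance to $z$. Any $v\in K\setminus K_p$ is excluded from some $\mathcal{C}_{i,z,\alpha_p}$, so $\|v-z\|\leq\omega(\pi/2-\alpha_p)$; since $z\in K_p$, we get $d_{PH}(K,K_p)\leq\omega(\pi/2-\alpha_p)\to 0$.

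The hard part is arranging for a single half-angle $\alpha_p<\pi/2$ to work simultaneously for all cones and all feet, and this is exactly where the no-line-segment hypothesis bites: without it, some $x_j$ could lie on the supporting hyperplane at a foot of another double normal, forcing the relevant angle to $\pi/2$ and blocking any such uniform choice.
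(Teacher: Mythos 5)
Your proof is correct and follows essentially the same approach as the paper's: intersect $K$ with cones of revolution at the feet, with apex at each foot, axis along the double normal, and half-angle tending to $\pi/2$, then invoke local strict convexity (the no-line-segment hypothesis) to get convergence. You fill in the details (the finite angle bound $\alpha^*$ keeping all feet and an interior point inside every cone, and the compactness modulus for $d_{PH}(K,K_p)\to 0$) that the paper's one-line proof asserts as "clear."
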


\begin{proof}
Let $u_{i}$, $v_{i}$ be the feet of $b_{i}$ ($i=1,\ldots,n$), and consider the
full half cone of revolution $C_{i,p}^{+}$ (respectively $C_{i,p}^{-}$) with
vertex $u_{i}$ (respectively $v_{i}$), axis $\overline{u_{i}v_{i}}$, angle
$\frac{\pi}{2}-\frac{1}{p}$ between the axis and the generatrices, and
containing $v_{i}$ (respectively $u_{i}$). Since $K$ is locally strictly
convex near $u_{i}$ and $v_{i}$, the intersection $K_{p}$ of $K$ and all these
cones tends to $K$ when $p$ tends to $\infty$ and clearly $b_{i}\in\MS(K_{p})$.
\end{proof}

\begin{thm}
\label{thm:UPD-bin}For most $K\in\bds$, we have $\dim_{P}\BF\left(  K\right)
=d$.
\end{thm}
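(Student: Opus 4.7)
The upper bound $\dim_P\BF(K)\leq d$ is automatic, since $\BF(K)$ is a compact subset of the $d$-dimensional Lipschitz manifold $\bd K$; I focus on the generic lower bound. Following the pattern of Theorem~\ref{T5} and applying Lemma~\ref{lem:dimP-Homo} contrapositively together with a countable basis of balls $\Bo(u,r)$ indexed by $u$ in a countable dense set $Z\subset\E$ and $r\in\mathbb{Q}_+$, the bad set decomposes as
\[
\{K\in\bds : \dim_P\BF(K)<d\}\subset\bigcup_{n',m,u,r}\mathcal{A}_{n',m,u,r},
\]
where
\[
\mathcal{A}_{n',m,u,r}=\Bigl\{K\in\bds : \BF(K)\cap\Bc(u,r)\neq\emptyset\ \text{and}\ \forall\delta\leq\tfrac{1}{m},\ P_\delta\bigl(\BF(K)\cap\Bc(u,r)\bigr)\leq\delta^{-(d-1/n')}\Bigr\}.
\]
Each $\mathcal{A}_{n',m,u,r}$ is closed: given $K_p\to K$ with $K_p\in\mathcal{A}_{n',m,u,r}$, extracting limits of $\delta$-separated feet taken from $\BF(K_p)\cap\Bc(u,r)$ via Lemma~\ref{lem:USC} produces a $\delta$-separated family in $\BF(K)\cap\Bc(u,r)$, yielding both nonemptiness and the $P_\delta$-bound at the limit.

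The core of the proof is to show that each $\mathcal{A}_{n',m,u,r}$ has empty interior; since it is closed, it suffices to approximate any $K_0\in\bds$ within $\varepsilon$ by a body $K_3\notin\mathcal{A}_{n',m,u,r}$. First approximate $K_0$ within $\varepsilon/3$ by a standard polytope $K_1$ (Lemma~\ref{lem:std-dense}), whose $\BF(K_1)$ is finite by Lemma~\ref{lem:std-finite}. If $\BF(K_1)\cap\Bc(u,r)=\emptyset$ take $K_3=K_1$. Otherwise, pick a foot $x\in\BF(K_1)\cap\Bc(u,r)$ and, after a small translation if necessary, assume $x\in\Bo(u,r)$. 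Apply Lemma~\ref{lem:spher-cap} at $x$ with a sufficiently small parameter to produce $K_2$ within $\varepsilon/3$ of $K_1$ whose boundary contains two antipodal spherical caps $C$, $C'$ of $\IS(o,R)$, with $C\subset\Bo(u,r)$; each pair of antipodal points of $C\cup C'$ is a double normal of $K_2$, so $C\subset\BF(K_2)$. Since $C$ is a $d$-dimensional spherical region, $P_\delta(C)\geq c\delta^{-d}$ for some $c>0$ and all small $\delta$. Choose $\delta\leq1/m$ small enough that $c\delta^{-d}>\delta^{-(d-1/n')}$, and pick $N>\delta^{-(d-1/n')}$ points of $C$ forming a $\delta$-set. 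Finally apply Lemma~\ref{lem:make-max-strict} to the corresponding $N$ double normals of $K_2$, taking the cone parameter $p$ large enough relative to the pairwise distances of the $N$ feet so that the cutting cones do not exclude one another's vertices, to produce $K_3$ within $\varepsilon/3$ of $K_2$ for which those $N$ double normals are strictly maximizing with their feet unchanged. Then $P_\delta(\BF(K_3)\cap\Bc(u,r))\geq N>\delta^{-(d-1/n')}$, so $K_3\notin\mathcal{A}_{n',m,u,r}$.

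The main obstacle is coordinating the various scales: the cap $C$ must fit inside $\Bo(u,r)$, which forces the radius $R$ in Lemma~\ref{lem:spher-cap} to be close to $\lgt(b)/2$; the chosen $\delta$ must satisfy $\delta\leq1/m$ while being small enough for a $\delta$-net of size $>\delta^{-(d-1/n')}$ to fit in $C$; and the half-opening $\tfrac{\pi}{2}-\tfrac{1}{p}$ of the cones in Lemma~\ref{lem:make-max-strict} must be close enough to $\pi/2$ that, for two chosen feet $u_i,u_j$ on the sphere at distance $\geq\delta$, the point $u_i$ still belongs to the cone at $u_j$ (which requires $p\gtrsim R/\delta$). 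These constraints decouple in order—first the cap, then $\delta$, then $p$—so all the smallness requirements can be met simultaneously.
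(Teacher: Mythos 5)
Your construction of $K_3$ (spherical caps via Lemma~\ref{lem:spher-cap}, a $\delta$-net on a cap, and Lemma~\ref{lem:make-max-strict} to make the corresponding double normals strictly maximizing) matches the paper's; the gap is in your reduction, specifically the claim that $\mathcal{A}_{n',m,u,r}$ is closed. Lemma~\ref{lem:USC} gives only \emph{upper} semicontinuity of $K\mapsto\BF(K)$: a limit of feet of $K_p$ is a foot of $K$, but a foot of $K$ need not be approached by feet of $K_p$. Passing a $\delta$-set in $\BF(K_p)\cap\Bc(u,r)$ to the limit therefore shows $P_\delta(\BF(K)\cap\Bc(u,r))\geq\limsup_p P_\delta(\BF(K_p)\cap\Bc(u,r))$, i.e.\ $P_\delta$ is upper semicontinuous in $K$. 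Your set $\mathcal{A}_{n',m,u,r}$ is cut out by an \emph{upper} bound on $P_\delta$, and upper bounds on an upper semicontinuous function are not preserved in the limit: a convergent sequence $K_p\in\mathcal{A}_{n',m,u,r}$ could easily have a limit $K$ with many more $\delta$-separated feet, violating the bound. (Note the contrast with Theorem~\ref{T5}, where the bad set is cut out by a \emph{lower} bound on $P_\delta$, so the same USC argument does give closedness there — this is exactly why mimicking that proof fails here.)

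The fix is already implicit in your construction but you never cash it in: because the $N$ double normals of $K_3$ are \emph{strictly} maximizing, Lemma~\ref{lem:stab} supplies a whole open neighbourhood $\mathcal{U}$ of $K_3$ such that every $K\in\mathcal{U}$ has, for each of those $N$ chords, a maximizing chord within $\delta/4$. Choosing $\delta$ a bit smaller than needed so that $N>(\delta/2)^{-(d-1/n')}$, this gives $P_{\delta/2}(\BF(K)\cap\Bc(u,r))\geq N>(\delta/2)^{-(d-1/n')}$ for all $K\in\mathcal{U}$, so $\mathcal{U}\cap\mathcal{A}_{n',m,u,r}=\emptyset$. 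Since $\mathcal{U}$ is open, it also misses $\overline{\mathcal{A}_{n',m,u,r}}$, and the density of such $K_3$'s then shows $\overline{\mathcal{A}_{n',m,u,r}}$ is nowhere dense without any need for closedness of $\mathcal{A}_{n',m,u,r}$ itself. This is precisely what the paper does, phrased instead as showing that the interior of the complementary ``good'' set $\Omega_{V,N}$ is dense, and it is the step your proposal omits.
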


\begin{proof}
Let $\mathcal{U}$ be a countable base of open sets of $\E$. For $N\geq1$ and
$V\in\mathcal{U}$, we define
\[
\Omega_{V,N}=\set(:K\in\bds|%
\begin{array}
[c]{l}%
\BF\left(  K\right)  \cap V=\emptyset\\
\text{or}\\
\exists\delta\in\left]  0,\frac{1}{N}\right[  \,\text{s.t. }\frac{\ln
P_{\delta}(\BF\left(  K\right)  \cap V)}{-\ln\delta}>d-\frac{1}{N}%
\end{array}
:)\text{.}%
\]
If for a given $V\in\mathcal{U}$, $K$ lies in the intersection of all these
$\Omega_{V,N}$, it satisfies $\overline{\dim}_{P}\BF\left(  K\right)  \cap
V=d$ whenever $\BF\left(  K\right)  \cap V$ is non-empty, and it follows that
$\dim_{P}\BF\left(  K\right)  =d$ by Lemma \ref{lem:dimP-Homo}. Thus we just
have to check the density in $\bds$ of $\inn\Omega_{V,N}$.

Let $V\in\mathcal{U}$, $N\geq1$, $K_{0}\in\bds$ and $\varepsilon>0$; we look
for some $K_{3}\in\mathring{\Omega}_{N}$ such that $d_{PH}(K_{0}%
,K_{3})<\varepsilon$. If $K_{0}$ belongs to $\inn\Omega_{V,N}$ then the proof
is over, otherwise there exits $K_{1}\in\bds$ such that $d_{PH}(K_{0}%
,K_{1})<\varepsilon$ and $\BF\left(  K_{1}\right)  \cap V$ contains at least
one element $\left(  x_{1},y_{1}\right)  $.

By Lemma \ref{lem:spher-cap}, there exist $K_{2}\in\bds$, $R>0$, $o\in K_{2}$
such that $d_{PH}\left(  K_{0},K_{2}\right)  <\varepsilon$ and $\bd K_{2}$
contains two open subsets $U^{\pm}$ of $S\overset{\mathrm{def}}{=}\IS(o,R)$,
image one to the other by the symmetry $\sigma:p\mapsto2o-p$, and such that
$U^{+}\subset V$. Choose $x\in U^{+}$ and let $r>0$ be small enough to ensure
that $C^{+}\overset{\mathrm{def}}{=}\Bc(x,r)\cap S\subset U^{+}$.

Since $\dim C^{+}=d$, we can choose $0<\delta<1/N$ such that
\[
{\frac{\ln P_{\delta}(C^{+})}{\ln2-\ln\delta}>d-\frac{1}{N}}%
\]
and a $\delta$-set $F\subset C^{+}$ with cardinality $P_{\delta}(C^{+})$.
Clearly $(x,\sigma\left(  x\right)  )\in\B(K_{1})$ for $x\in F$. By Lemma
\ref{lem:make-max-strict}, there exists $K_{3}\in\bds$ such that
$d_{PH}\left(  K_{0},K_{3}\right)  <\varepsilon$ and $(x,\sigma\left(
x\right)  )\in\MS(K_{3})$ for any $x\in F$.

By virtue of Lemma \ref{lem:stab}, there is a neighbourhood $V$ of $K_{3}$ in
$\bds$ such that for any $K\in V$, and any $x\in F$, there exists a double
normal $\left(  \tilde{x},\tilde{y}\right)  \in\B\left(  K\right)  $ verifying
$\widetilde{x}\in\Bo(x,\delta/4)\cap V$. From this we get that $P_{\delta
/2}(\BF\left(  K\right)  \cap V)\geq P_{\delta}(C^{+})$ and thus $K\in
\Omega_{V,N}$. Hence $K_{3}\in\inn\Omega_{N}$ and the proof is complete.
\end{proof}

\begin{rem}
\label{Rmk:diam} The reader may ask why this theorem is stated for $\BF$
instead of $\B$. As a matter of fact, obviously,
\[
\dim_{P}\BF\left(  K\right)  \leq\dim_{P}\B\left(  K\right)  \leq\dim
_{P}\Diam\left(  K\right)  \text{.}%
\]
Since this set is canonically one to one mapped (for a $\mathcal{C}^{1}$
strictly convex body $K$) on the unit sphere of $\E$, one may think that
$\overline{\dim}_{P}\Diam\left(  K\right)  =d$, in which case Theorem
\ref{thm:UPD-bin} would hold for $\B$ as well. However, this bijection is not
(known to be) regular enough to get any conclusion on the dimension of
$\Diam(K)$.

For a smooth strictly convex body, there is a diametral map $\Delta_{K}:\bd
K\rightarrow\bd K$ which associates to a point $x$ the only point $x^{\prime}$
such that $\left(  x,x^{\prime}\right)  \in\Diam\left(  K\right)  $. Hence
$\Diam\left(  K\right)  \subset\bd K\times\bd K$ is the graph of this map.
However, this map is not necessarily Lipschitz continuous, or regular enough
to carry any dimensional information. Indeed, K. Adiprasito and T. Zamfirescu
proved that it behaves rather badly in the typical case, for it maps a set of
full measure on a set of measure zero \cite{AZ1}.

Nevertheless, if $d=1$, an elementary argument of monotony of $\Delta_{K}$
shows that $\dim\Diam\left(  K\right)  =1$ for any reasonable notion of
dimension. See Lemma \ref{lem:dim-diam}.
\end{rem}

\section{Critical values\label{sec:crit-val}}

This section focuses on the set of lengths of double normals. As seen earlier,
double normals can be seen as critical points of the length function, so their
lengths are critical values.

\begin{lem}
\label{lem:supstd-dense}There is an open and dense set $U\subset\E^{n}$ such
that for any four pairwise disjoint non-empty sets of indices $I$, $J$,
$I^{\prime}$, $J^{\prime}\subset\mathbb{N}_{n}$ of cardinality at most $d$,
the distance between $\left\langle x_{I}\right\rangle $ and $\left\langle
x_{J}\right\rangle $ and the distance between $\left\langle x_{I^{\prime}%
}\right\rangle $ and $\left\langle x_{J^{\prime}}\right\rangle $ are either
distinct or both equal to $0$.
\end{lem}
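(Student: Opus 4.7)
The plan is to run the same kind of polynomial-vanishing argument as in the proof of Lemma~\ref{lem:std-dense}. Start with the open dense set $U_{0}\subset\E^{n}$ of $n$-tuples in generic position (already produced in Lemma~\ref{lem:std-dense}); on $U_{0}$, $\dim\langle x_{L}\rangle=\#L-1$ for every non-empty $L\subset\mathbb{N}_{n}$ with $\#L\leq d+2$. For each pair $(I,J)$ of disjoint non-empty subsets of $\mathbb{N}_{n}$ with $\#I,\#J\leq d$, the squared distance
\[
D_{I,J}^{2}(x):=d\bigl(\langle x_{I}\rangle,\langle x_{J}\rangle\bigr)^{2}
\]
admits on $U_{0}$ a closed-form expression as a rational function $P_{I,J}(x)/Q_{I,J}(x)$ in the $n(d+1)$ coordinates of $x$, obtained from the Gram-determinant formula for the length of the common perpendicular: the denominator $Q_{I,J}$ is a Gram determinant of $\#I+\#J-2$ vectors spanning $\overrightarrow{x_{I}}+\overrightarrow{x_{J}}$, which does not vanish on $U_{0}$ precisely because of the generic-position hypothesis.

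For each quadruple $(I,J,I^{\prime},J^{\prime})$ of pairwise disjoint non-empty subsets of $\mathbb{N}_{n}$ of cardinality at most $d$, set
\[
R_{I,J,I^{\prime},J^{\prime}}:=P_{I,J}\,Q_{I^{\prime},J^{\prime}}-P_{I^{\prime},J^{\prime}}\,Q_{I,J},
\]
so that $D_{I,J}^{2}-D_{I^{\prime},J^{\prime}}^{2}=R_{I,J,I^{\prime},J^{\prime}}/(Q_{I,J}Q_{I^{\prime},J^{\prime}})$ on $U_{0}$. The heart of the proof is to show that $R_{I,J,I^{\prime},J^{\prime}}\not\equiv 0$ as a polynomial on $\E^{n}$, except in the degenerate case where both $D_{I,J}^{2}$ and $D_{I^{\prime},J^{\prime}}^{2}$ vanish identically on $U_{0}$---in which case the required conclusion ``both distances $=0$'' is automatic and there is nothing to enforce. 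To obtain the non-vanishing I would exploit the pairwise disjointness: since $I$ is non-empty, fix $i\in I$; then $i\notin J\cup I^{\prime}\cup J^{\prime}$, so $D_{I^{\prime},J^{\prime}}^{2}$ is independent of $x_{i}$, whereas $D_{I,J}^{2}$ is non-constant in $x_{i}$ (moving $x_{i}$ transversally to $\langle x_{I\setminus\{i\}}\rangle$ rotates $\langle x_{I}\rangle$ around that subspace and, for generic positions of the remaining points, changes its distance to $\langle x_{J}\rangle$; one can even drive this distance to $0$ by forcing $x_{i}$ onto a suitable line meeting $\langle x_{J}\rangle$). Consequently $D_{I,J}^{2}-D_{I^{\prime},J^{\prime}}^{2}$ is non-constant in $x_{i}$, hence its numerator $R_{I,J,I^{\prime},J^{\prime}}$ cannot be identically zero.

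Once non-vanishing is in place, the zero set of each $R_{I,J,I^{\prime},J^{\prime}}$ is a proper real algebraic hypersurface of $\E^{n}$, closed with empty interior; the finite intersection
\[
U:=U_{0}\cap\bigcap_{(I,J,I^{\prime},J^{\prime})}\set(:x\in\E^{n}|R_{I,J,I^{\prime},J^{\prime}}(x)\ne 0:),
\]
where the intersection ranges over the quadruples for which $R_{I,J,I^{\prime},J^{\prime}}\not\equiv 0$, is therefore open and dense in $\E^{n}$. For $x\in U$ and any such quadruple, $R_{I,J,I^{\prime},J^{\prime}}(x)\ne 0$ together with $Q_{I,J}(x),Q_{I^{\prime},J^{\prime}}(x)\ne 0$ forces $D_{I,J}(x)\ne D_{I^{\prime},J^{\prime}}(x)$; for the remaining ``degenerate'' quadruples, ``both $=0$'' holds automatically on $U_{0}$. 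The main obstacle I anticipate is the Gram-determinant bookkeeping needed to present $P_{I,J}$, $Q_{I,J}$ as genuine polynomials and to carry out the non-vanishing argument uniformly across all admissible cardinalities; the geometric idea (perturb a coordinate $x_{i}$ that appears in only one of the two affine spans) is elementary, but translating it into a clean global polynomial statement requires some care.
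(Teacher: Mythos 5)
Your proposal takes essentially the same route as the paper: restrict to an open dense set of generic configurations, express the squared distances as rational functions in the coordinates of $x$, and reduce the conclusion to the non-vanishing of a polynomial $R$, which you argue for by perturbing a coordinate $x_{i}$ appearing in only one of the two index pairs. The paper compresses that last step to ``obvious because the sets of indices are disjoint''; your elaboration (move $x_{i}$ for $i\in I$: $D_{I',J'}^{2}$ does not see it, while $D_{I,J}^{2}$ changes and can be driven to $0$ by placing $x_{i}$ on $\langle x_{J}\rangle$) is the right way to fill it in, and it works verbatim when both squared distances are non-trivial.

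The one point to tighten is the mixed case, which arises when $d\geq3$: if $\#I+\#J\geq d+3$ then on the generic set the affine spans $\langle x_{I}\rangle$ and $\langle x_{J}\rangle$ meet, so $D_{I,J}^{2}\equiv0$ and in particular it is \emph{constant} in $x_{i}$; you only excluded the case where both distances vanish identically, so your perturbation argument as written fails when exactly one side is degenerate. The fix is one line: either perturb an index from whichever pair has the non-degenerate distance, or simply note that $P_{I,J}\equiv0$ already forces $R=-P_{I',J'}Q_{I,J}\not\equiv0$. The paper's own proof quietly elides the same edge case by restricting to $\#I+\#J\leq d+3$ on both sides at once, so your version, once patched, is actually a bit more explicit than the original.
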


\begin{proof}
There is an open and dense set $V_{0}\subset\E^{n}$ such that for any
non-empty set of indices $I\subset\mathbb{N}_{n}$, $\dim\left\langle
x_{I}\right\rangle =\#I-1\text{. If }\#I+\#J>d+3$ then $d\left(  \left\langle
x_{I}\right\rangle ,\left\langle x_{J}\right\rangle \right)  =0$ for any $x\in
V_{0}$. So, from now on, we assume implicitly that $\#I+\#J\leq d+3$ and
$\#I^{\prime}+\#J^{\prime}\leq d+3$. Now, by Lemma \ref{lem:std-dense}, there
is an open and dense set $V_{1}\subset V_{0}$ such that for any disjoint sets
of indices $I$, $J\subset\mathbb{N}_{n}$, $\overrightarrow{x_{I}}%
\cap\overrightarrow{x_{J}}=\left\{  0\right\}  $. Moreover, there exists a
real valued rational function $P_{IJ}$ on $\E^{n}$ whose restriction to
$V_{1}$ satisfies $d\left(  \left\langle x_{I}\right\rangle ,\left\langle
x_{J}\right\rangle \right)  ^{2}=P_{IJ}\left(  x\right)  $. We have to prove
that, given four pairwise disjoint sets of indices $I$, $J$, $I^{\prime}$,
$J^{\prime}$, the open set $U_{II^{\prime}JJ^{\prime}}=\set(:x\in V_{1}%
|P_{IJ}\left(  x\right)  \neq P_{I^{\prime}J^{\prime}}\left(  x\right)  :)$ is
dense in $V_{1}$. Since it is defined by polynomials inequations, it is
sufficient to prove that it is not empty. This latter fact is obvious because
the sets of indices are disjoint.
\end{proof}

\begin{thm}
\label{thm:inj}For most $K\in\bds$, $\widetilde{\lgt }_{K}:\Bno(K)\to\R$ is injective.
\end{thm}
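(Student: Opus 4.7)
I would approach this via a standard Baire category argument. Let $\mathcal{A}$ be the set of $K\in\bds$ for which $\widetilde{\lgt}_{K}$ fails to be injective, and decompose it as $\mathcal{A}=\bigcup_{n\geq 1}\mathcal{A}_{n}$, where $\mathcal{A}_{n}$ consists of those $K$ admitting $b_{1},b_{2}\in\Bno(K)$ at quotient distance at least $1/n$ with $\widetilde{\lgt}(b_{1})=\widetilde{\lgt}(b_{2})$. Each $\mathcal{A}_{n}$ is closed: given $K_{p}\in\mathcal{A}_{n}$ converging to $K$, lift witnesses to oriented double normals in $\B(K_{p})$, extract convergent subsequences by Lemma~\ref{lem:USC}, project back through the $1$-Lipschitz map $\phi_{K}$, and use continuity of $\lgt$ to conclude that the limits yield a pair in $\Bno(K)$ of the same properties.

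The heart of the proof is to show $\mathcal{A}_{n}$ has empty interior. Given $K_{0}\in\bds$ and $\varepsilon>0$, I would approximate $K_{0}$ by a standard polytope $K^{\prime}$ (Lemma~\ref{lem:std-dense}) whose vertex tuple lies, in addition, in the open dense set $U\subset\E^{n}$ of Lemma~\ref{lem:supstd-dense}. By Lemma~\ref{lem:std-finite}, $\Bno(K^{\prime})$ is finite, and the proof of that lemma shows each double normal $b=(x,y)$ is uniquely determined by the unordered pair $\{F_{x},F_{y}\}$ of minimal facets containing its feet, with $\dim F_{x}+\dim F_{y}\leq d$.

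Next, the length of such a $b$ equals the distance between the parallel supporting hyperplanes $H_{x}\supset F_{x}$ and $H_{y}\supset F_{y}$ at the feet; since the orthogonal projection of $y$ onto $H_{x}$ is the point $x\in\langle F_{x}\rangle$, this distance coincides with $d(\langle F_{x}\rangle,\langle F_{y}\rangle)$. Thus every double-normal length of $K^{\prime}$ is realised as the distance between the affine spans of two disjoint vertex subsets. Distinct unoriented double normals give rise to distinct unordered pairs of such subsets, and Lemma~\ref{lem:supstd-dense} guarantees that the corresponding distances are pairwise distinct, since they are positive and hence cannot jointly vanish. Therefore $\widetilde{\lgt}_{K^{\prime}}$ is injective, $K^{\prime}\notin\mathcal{A}_{n}$, and density is established.

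The main obstacle is matching the cardinality constraint in Lemma~\ref{lem:supstd-dense} (pairwise disjoint index sets of size at most $d$) against our setting, where two pairs of minimal facets may share vertices or span vertex sets of size up to $d+1$. I expect this to be handled by a routine strengthening of that lemma: equality of two squared distances between affine spans is a polynomial identity in the vertex coordinates, and exhibiting a single configuration where it fails promotes the generic-position argument to the unordered-pair version actually required above.
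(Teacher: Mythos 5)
Your proposal follows essentially the same route as the paper's (very terse) proof: the decomposition $\mathcal{A}=\bigcup_{n}\mathcal{A}_{n}$ with the $1/n$-separation of the two witnesses, closedness of $\mathcal{A}_{n}$ via Lemma~\ref{lem:USC}, and density of good polytopes obtained by identifying each double-normal length with the distance between the affine spans of the minimal faces at its feet and then invoking Lemma~\ref{lem:supstd-dense}.

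The cardinality/disjointness mismatch you flag at the end is a genuine loose end, and it is present in the paper's own one-line invocation of Lemma~\ref{lem:supstd-dense} just as much as in yours. Already for a triangle in the plane ($d=1$), the altitude from $A$ to $BC$ has index sets $\{A\}$ and $\{B,C\}$; the latter has cardinality $2>d$, and a second altitude reuses the vertex $A$, so the hypothesis of four pairwise disjoint index sets of cardinality at most $d$ fails on both counts. Your proposed remedy --- strengthening Lemma~\ref{lem:supstd-dense} to allow overlapping index sets and sizes up to $d+1$ --- is indeed what is needed, and it is routine: the proof of that lemma only uses $\#I+\#J\le d+3$ (so the spans need not intersect generically) plus the fact that the two squared distances are distinct rational functions on $\E^{n}$, which reduces to a nonemptiness check that still works when the sets overlap as long as $\{I,J\}\ne\{I',J'\}$. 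So your argument is correct in outline, matches the paper's, and honestly identifies the one technical gap that the paper's exposition glosses over.
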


\begin{proof}
A set $K$ has a non-injective function $\widetilde{\lgt}_{K}$ if and only if
there exists an integer $n$ and two non-oriented double normals $b_{1}$,
$b_{2}$ such that $d_{PH}\left(  b_{1},b_{2}\right)  \geq\frac{1}{n}$ and
$\lgt\left(  b_{1}\right)  =\lgt\left(  b_{2}\right)  $. For fixed $n$, the
set $\mathcal{A}_{n}$ of such bodies is obviously closed in $\bds^{d}$. Since
a double normal realizes the distance between the affine spaces spanned by two
disjoint faces (disjoint, because they lie in two parallel hyperplanes), by
Lemma \ref{lem:supstd-dense}, there is a dense set of polytopes that does not
intersect $\mathcal{A}_{n}$ and the proof is finished.
\end{proof}

\begin{cor}
\label{cor:MMS}For most $K\in\bds$, $\M(K)=\MS(K)$.
\end{cor}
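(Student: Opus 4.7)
The plan is to derive the corollary as a direct consequence of Theorem \ref{thm:inj}. Fix $K \in \bds$ in the residual set on which $\widetilde{\lgt}_K$ is injective, and let $b = (x, y) \in \M(K)$; the goal is to show $b \in \MS(K)$. Suppose, for contradiction, that $b$ is not a strict local maximum of $\lgt_K$. Then there exists a sequence $c_n \in \C(K)$ with $c_n \to b$, $c_n \neq b$, and $\lgt(c_n) \geq \lgt(b)$. Since $b$ is a local maximum, for $n$ large we must have $\lgt(c_n) = \lgt(b)$, so each such $c_n$ itself realizes the local maximum value on a neighbourhood of $b$ and Lemma \ref{lem:locmax} yields $c_n \in \B(K)$.

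The next step is to verify that the non-oriented projections $\phi_K(c_n)$ and $\phi_K(b)$ are genuinely distinct for large $n$. Since $d(c_n, b) \to 0$ while the reversed chord $(y, x)$ satisfies $d((y, x), b) = \|x - y\| = \lgt(b) > 0$, for $n$ sufficiently large $c_n$ coincides neither with $(x, y) = b$ nor with $(y, x)$, which forces $\phi_K(c_n) \neq \phi_K(b)$ in $\Bno(K)$. Both of these non-oriented double normals are sent by $\widetilde{\lgt}_K$ to the common value $\lgt(b)$, contradicting the injectivity supplied by Theorem \ref{thm:inj}.

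There is no real obstacle beyond the bookkeeping around orientations: the only point that deserves a moment's care is that a sequence of distinct oriented chords converging to $b$ eventually projects to non-oriented chords separated from $\phi_K(b)$, which follows cleanly from $\lgt(b) > 0$. Otherwise the proof is just a one-line combination of Lemma \ref{lem:locmax} with the injectivity of $\widetilde{\lgt}_K$.
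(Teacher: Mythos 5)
Your proof is correct and is essentially the argument the paper intends (the corollary is stated without proof immediately after Theorem \ref{thm:inj}, so the reader is expected to supply exactly this derivation): a non-strict local maximum $b$ produces, by Lemma \ref{lem:locmax}, a nearby double normal $c_n\neq b$ of the same length, and since $c_n\to b$ while $d((y,x),b)=\lgt(b)>0$, eventually $\phi_K(c_n)\neq\phi_K(b)$, contradicting injectivity of $\widetilde{\lgt}_K$. Your attention to the orientation issue is exactly the one point that needs care, and you handle it correctly.
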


\begin{cor}
\label{cor:LPDvalue}For most $K\in\bds$, $\sp(K)$ is homeomorphic to the
Cantor set and has lower box-counting dimension $0$.
\end{cor}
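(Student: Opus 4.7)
The plan is to combine three facts, each holding on a residual set in $\bds$: by Theorem~\ref{thm:cantor}, $\B(K)$ is homeomorphic to the Cantor set; by Theorem~\ref{T5}, $\underline{\dim}_{B}\B(K)=0$; and by Theorem~\ref{thm:inj}, $\widetilde{\lgt}_{K}$ is injective. A finite intersection of residual sets is residual, so all three properties hold simultaneously for most $K\in\bds$, and I shall deduce both claims about $\sp(K)$ from them.

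The dimension bound is quick. By Lemma~\ref{lem:Holder}, $\lgt_{K}$ is $2$-H\"{o}lder with constant $H_{K}$. Hence any $\delta$-separated subset of $\sp(K)$ lifts, by choosing one preimage of each point, to a $\sqrt{\delta/H_{K}}$-separated subset of $\B(K)$. This gives $P_{\delta}(\sp(K))\leq P_{\sqrt{\delta/H_{K}}}(\B(K))$, and a direct computation of the $\liminf$ as $\delta\to 0$ then yields
\[
\underline{\dim}_{B}\sp(K)\leq\tfrac{1}{2}\,\underline{\dim}_{B}\B(K)=0.
\]

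For the topological structure, injectivity of $\widetilde{\lgt}_{K}$ makes it a continuous bijection from the compact Hausdorff space $\Bno(K)$ (the continuous image of the compact $\B(K)$) to $\sp(K)$, hence a homeomorphism. It therefore suffices to show $\Bno(K)$ is a Cantor set. Non-emptiness and compactness are immediate. For total disconnectedness, any connected subset of $\Bno(K)$ is carried by the injective continuous map $\widetilde{\lgt}_{K}$ to a connected subset of $\sp(K)\subset\R$; but $\sp(K)$ has Hausdorff dimension $0$ (since its lower box-counting dimension vanishes), hence Lebesgue measure $0$, and therefore contains no non-trivial interval. For perfectness, suppose some class $[b]\in\Bno(K)$ were isolated. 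Since $\lgt(b)>0$, $\phi_{K}^{-1}(\{[b]\})=\{(x,y),(y,x)\}$ is a two-point open subset of $\B(K)$; as $\B(K)$ is Hausdorff, each of $(x,y)$ and $(y,x)$ would then be an isolated point of $\B(K)$, contradicting that $\B(K)$ is perfect.

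The only real subtlety is transferring perfectness and total disconnectedness across the two-to-one quotient $\phi_{K}:\B(K)\to\Bno(K)$; both transfers rely on the fact that $\phi_{K}$ identifies exactly the distinct pairs $(x,y)\leftrightarrow(y,x)$, which guarantees that the quotient preserves the relevant topological properties.
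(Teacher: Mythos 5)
Your proof is correct and follows the paper's route: injectivity of $\widetilde{\lgt}_K$ (Theorem~\ref{thm:inj}) makes it a homeomorphism from $\Bno(K)$ onto $\sp(K)$, so the Cantor structure transfers, and a modulus of continuity of $\lgt_K$ bounds $\underline{\dim}_B\sp(K)$ by $\underline{\dim}_B\B(K)=0$. Two small differences are worth noting. For the dimension bound the paper only invokes that $\lgt_K$ is Lipschitz, which already gives $\underline{\dim}_B\sp(K)\le\underline{\dim}_B\B(K)$; you instead reach for the sharper $2$-H\"older estimate of Lemma~\ref{lem:Holder} and get the extra factor $\tfrac12$, which is more than needed here (that estimate is what the paper actually leverages later, in Theorem~\ref{thm:UPD-values}). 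For the Cantor structure of $\Bno(K)$, which the paper leaves as ``easy to see'' from Lemmas~\ref{lem:tot-disc} and~\ref{lem:perfect}, you supply the details: your perfectness argument via the two-point open preimage is the natural one, and your total-disconnectedness argument via the measure-zero image in $\R$ is a clean alternative to transferring disconnectedness directly across the two-to-one quotient $\phi_K$ (which would otherwise require producing a $\phi_K$-saturated clopen separating set).
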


\begin{proof}
It's easy to see from Lemmas \ref{lem:tot-disc} and \ref{lem:perfect} that
$\Bno\left(  K\right)  $ is a Cantor set. Since, by Theorem \ref{thm:inj},
$\widetilde{\ell}:\Bno\left(  K\right)  \rightarrow\R$ is injective,
$\sp(K)=\widetilde{\ell}(\Bno\left(  K\right)  )=\lgt(\B\left(  K\right)  )$
is also a Cantor set. Moreover, $\lgt$ is Lipschitz continuous, whence, by
Theorem \ref{T5},
\[
\underline{\dim}_{P}\sp(K)\leq\underline{\dim}_{P}\B\left(  K\right)
=0\text{.}%
\]

\end{proof}

Concerning the upper dimension, we get the following result.

\begin{thm}
\label{thm:UPD-values} For most $K\in\bds$,

\begin{itemize}
\item if $d=1$, $\dim_{P}\sp\left(  K\right)  =\frac{1}{2}$,

\item if $d=2$, $\dim_{P}\sp\left(  K\right)  \geq\frac{3}{4}$,

\item if $d\geq3$, $\dim_{P}\sp\left(  K\right)  =1$.
\end{itemize}
\end{thm}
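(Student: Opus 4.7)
The plan is to handle the upper and the lower bound separately. For $d\geq 2$ the upper bound $\dim_P \sp(K) \leq 1$ is automatic since $\sp(K)\subset\R$. For $d=1$, I would combine the $2$-H\"older continuity of $\lgt_K$ (Lemma \ref{lem:Holder}) with the standard contraction of packing dimension under H\"older maps, $\dim_P f(X)\leq \dim_P X/\alpha$ for $\alpha$-H\"older $f$, to obtain $\dim_P \sp(K)\leq \frac{1}{2} \dim_P \B(K)$. Now $\B(K)\subset\Diam(K)$ (every double normal is an affine diameter), and for $d=1$ the monotonicity of the diametral map $\Delta_K$ furnishes $\dim_P \Diam(K)\leq 1$, as outlined in Remark \ref{Rmk:diam} and to be made precise by Lemma \ref{lem:dim-diam}. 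Hence $\dim_P \sp(K)\leq \frac{1}{2}$, matching the lower bound claimed for $d=1$.

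For the lower bounds I would reuse the Baire template from the proof of Theorem \ref{thm:UPD-bin}, combined with Lemma \ref{lem:dimP-Homo}. Let $\{V_k\}_{k\in\mathbb{N}}$ be a countable basis of open intervals of $\R$, let $s_d\in\{\tfrac{1}{2},\tfrac{3}{4},1\}$ be the target exponent for the given $d$, and for each integer $N\geq 1$ set
\[
\Omega_{V,N}=\set(: K\in\bds ~|~ \sp(K)\cap V=\emptyset\ \text{or}\ \exists\delta\leq\tfrac{1}{N},\ \tfrac{\ln P_\delta(\sp(K)\cap V)}{-\ln\delta}>s_d-\tfrac{1}{N} :).
\]
Openness of $\inn\Omega_{V,N}$ is routine from Lemma \ref{lem:stab} together with the Lipschitz continuity of $\lgt$; the burden is density. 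For $d\geq 3$, density is immediate from Kuiper's Theorem \ref{qthm:Kuiper}: given $K_0$ with $\sp(K_0)\cap V\neq\emptyset$, I would graft into $K_0$ a small rescaled copy of the body $K^\star$ through a spherical-cap modification analogous to Lemma \ref{lem:spher-cap}, chosen so that the contributed non-degenerate interval of lengths falls inside $V$. Any interval has upper box dimension $1=s_d$, so the resulting body belongs to $\Omega_{V,N}$ for every $N$.

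For $d=1$ and $d=2$, no interval of lengths is available (Theorem \ref{qthm:Kuiper}), so the required well-separated set of length values must be produced explicitly. The $2$-H\"older constraint of Lemma \ref{lem:Holder} forces any $\delta$-separated set of lengths to come from $\sqrt{\delta/H_{K_0}}$-separated chords, so with a $d$-dimensional topological budget for feet one can hope for at most about $\delta^{-d/2}$ distinct $\delta$-separated lengths in a window. My plan is to graft onto $K_0$ a family of small pairs of antipodal spherical caps of carefully chosen radii, whose centres realize a $\sqrt{\delta}$-net in a small piece of $\bd K_0$, then use Lemma \ref{lem:make-max-strict} to turn the resulting double normals into strictly maximizing chords, and finally propagate the box-counting estimate to a whole neighbourhood via Lemma \ref{lem:stab}.

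The main obstacle is the calibration of this construction for $d=2$: the naive free arrangement would apparently reach exponent $d/2=1$, but keeping the opposite caps in genuinely parallel supporting hyperplanes while ensuring pairwise distinct lengths introduces a codimensional constraint causing a loss, so I expect to reach only the claimed $\tfrac{3}{4}$, in accord with the open problem recorded in Remark \ref{Rmk:34}.
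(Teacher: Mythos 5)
Your overall architecture coincides with the paper's: the upper bound for $d=1$ via $2$-H\"older contraction of dimension (Lemma \ref{lem:Holder}) composed with the monotonicity of the diametral map (Lemma \ref{lem:dim-diam}); a Baire template $\Omega_{V,N}$ localized on basic open sets $V$, with Lemma \ref{lem:dimP-Homo} converting upper box dimension into packing dimension; Kuiper's Theorem \ref{qthm:Kuiper} grafted via the spherical-cap surgery (Lemma \ref{lem:spher-cap}) to produce an interval of lengths when $d\geq 3$; and Lemma \ref{lem:make-max-strict} plus Lemma \ref{lem:stab} to propagate the estimate to a whole neighbourhood. These parts are sound and match the paper's Lemmas \ref{lem:densU-d1}, \ref{lem:densU-d3} and the final compilation step.

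The genuine gap is the case $d=2$. You start from a $\sqrt{\delta}$-net in $\bd K_0$ (which carries $\sim\delta^{-1}$ points and would, if usable, give exponent $1$), and you then gesture at a ``codimensional constraint causing a loss'' landing, without explanation, at exactly $\tfrac34$. But a $\sqrt{\delta}$-net simply cannot be used here: to make each grafted point a vertex realizing a strictly maximizing chord one must assign a radius to each point, and the acute-angle (strict maximality) condition forces the radius function to vary at most like the square of the angular separation. On a $2$-dimensional $\sqrt{\delta}$-net of size $\sim\delta^{-1}$, pairwise $\delta$-separation of the radii forces the radius spread to be $\sim 1$, while adjacent points (at angular distance $\sqrt{\delta}$) may then have radius gaps far exceeding the allowed $\delta$; equivalently, one would need a near-Lipschitz bijection from a $\sqrt{\delta}\times\sqrt{\delta}$ grid onto a $\delta$-spaced sequence, which does not exist. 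The paper resolves this not by ``losing'' from a $\sqrt{\delta}$-net but by using a genuinely anisotropic $\delta^{1/2}\times\delta^{1/4}$ grid: $m^2$ steps of size $T/m^2\sim\sqrt{\delta}$ in one angular coordinate and $m$ steps of size $T/m\sim\delta^{1/4}$ in the other, with radius step $\delta\sim m^{-4}$, so that along the fine direction neighbours differ in radius by $\delta$ (allowed since $(\sqrt{\delta})^2=\delta$) and along the coarse direction neighbours differ by $m^2\delta\sim\delta^{1/2}$ (allowed since $(\delta^{1/4})^2=\delta^{1/2}$). This gives $m^3\sim\delta^{-3/4}$ points and is precisely calibrated to pass the pairwise acute-angle check (the paper's Lemma \ref{lem:dimsp-d2-angles}). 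Without this anisotropic design and its verification, the $d=2$ lower bound is not proved; and indeed your own heuristic ``$\delta^{-d/2}$ lengths'' predicts $1$, not $\tfrac34$, so the argument as written does not produce the claimed exponent.

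A minor inaccuracy elsewhere: for $d=1$ (and $d=2$) the paper does not graft pairs of antipodal caps, but adds individual points as new vertices protruding beyond a sphere of radius $R$; the cap surgery of Lemma \ref{lem:spher-cap} is only the preliminary normalization. This is close in spirit to your sketch, but the precise mechanism making each $(v_i,-v_i)$ a strictly maximizing chord is the elementary acute-triangle computation, not a cap-by-cap argument.
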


\begin{rem}
\label{Rmk:34} We conjecture that, in the case $d=2$, $\dim_{P}\sp\left(
K\right)  $ cannot exceed $3/4$ for any $K\in\mathcal{K}$. Obviously the
conjecture implies the equality in Theorem \ref{thm:UPD-values}.
\end{rem}

The rest of the section is devoted to the proof and will be divided in several
lemmas; the final compilation is postponed to the end of the section.

\begin{lem}
\label{lem:dim-diam}If $d=1$ and $K\in\bds$ is $\mathcal{C}^{1}$ then
$\overline{\dim}_{B}\Diam\left(  K\right)  =1$.
\end{lem}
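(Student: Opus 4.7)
The plan is to exhibit $\Diam(K)$ as the graph of a monotone involution of $\bd K$ and then invoke the classical box-counting estimate for monotone graphs. Since $K$ is $\mathcal{C}^{1}$ and strictly convex (the latter being implicit, or guaranteed by Theorem~\ref{qthm:Klee-Gruber} in the typical-body context where this lemma is applied), the outer unit normal map $\nu:\bd K\to\IS(0,1)$ is a homeomorphism, and I would define $\Delta_{K}(x)\eqd\nu^{-1}(-\nu(x))$. This is a continuous fixed-point-free involution of $\bd K$ whose graph inside $\bd K\times\bd K$ is precisely $\Diam(K)$.

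Next, I would parametrize $\bd K$ by arc length $\gamma:\R/L\Z\to\bd K$ (with $L$ the perimeter) and conjugate to obtain $\widetilde{\Delta}\eqd\gamma^{-1}\circ\Delta_{K}\circ\gamma$. The geometric crux is that $\widetilde{\Delta}$ is monotone: as $x$ traverses $\bd K$ counterclockwise, $\nu(x)$ rotates counterclockwise along $\IS(0,1)$, so $-\nu(x)$ does too, and hence $\Delta_{K}(x)$ also moves counterclockwise. Thus $\widetilde{\Delta}$ admits a strictly increasing lift $\R\to\R$. From here the upper bound is a routine cover-counting argument: partition $\R/L\Z$ into $N=\lceil L/\delta\rceil$ arcs $I_{k}$ of length $\delta$; by monotonicity the images $\widetilde{\Delta}(I_{k})$ are arcs of respective lengths $V_{k}$ with $\sum_{k}V_{k}=L$, so the portion of $\Diam(K)$ above $\gamma(I_{k})$ sits in a $\delta\times V_{k}$ box and contains a $\delta$-set of cardinality at most $V_{k}/\delta+1$. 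Summing yields $P_{\delta}(\Diam(K))\leq 2L/\delta+N$, whence $\overline{\dim}_{B}\Diam(K)\leq 1$.

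The matching lower bound is immediate: $\Diam(K)$ is a non-degenerate continuum (a topological circle), hence its box dimension is at least $1$. The main obstacle is securing the monotonicity of $\Delta_{K}$, which fails for a $\mathcal{C}^{1}$ body that is not strictly convex: for instance, a body with two parallel flat boundary segments makes $\Diam(K)$ genuinely two-dimensional. So the strict-convexity assumption is truly essential, even though not stated explicitly in the lemma; in the typical setting of the paper it is automatic by Theorem~\ref{qthm:Klee-Gruber}.
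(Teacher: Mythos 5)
Your proof is correct and follows essentially the same route as the paper: identify $\Diam(K)$ with the graph of the diametral map $\Delta_{K}:\bd K\to\bd K$, show that $\Delta_{K}$ is monotone in an arc-length parametrization, and deduce $\overline{\dim}_{B}\Diam(K)\leq 1$ from the cover-counting bound for monotone graphs, the reverse inequality being automatic since $\Diam(K)$ is a non-degenerate continuum. The one genuine divergence is the mechanism for monotonicity. You read it off the Gauss map: the outer normal $\nu$ is an orientation-preserving homeomorphism onto the unit circle, hence so is $x\mapsto\nu^{-1}(-\nu(x))$. The paper instead invokes the classical planar fact that any two distinct affine diameters of a convex body intersect in the interior of $K$, which forces the four endpoints to interleave cyclically on $\bd K$ and thus forces $\Delta_{K}$ to be (locally) monotone. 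The two justifications are morally equivalent (the intersection fact is itself most easily proved via the Gauss map), but yours is more explicit and converts directly into the quantitative estimate $P_{\delta}(\Diam(K))\lesssim L/\delta$. Your remark about strict convexity is also well taken: the lemma as literally stated requires it, since for a $\mathcal{C}^{1}$ body with two parallel flat boundary segments (a stadium, say) $\Delta_{K}$ is not single-valued and $\Diam(K)$ contains a two-dimensional product of segments. The paper's proof tacitly assumes strict convexity in defining $\Delta_{K}$ as a function, and in the sole place the lemma is used, Theorem~\ref{thm:UPD-values}, the body is typical and hence strictly convex by Theorem~\ref{qthm:Klee-Gruber}, so no harm results; but the hypothesis really belongs in the statement.
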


\begin{proof}
Let $\Delta_{K}:\bd K\rightarrow\bd K$ be the function which associates to
each point $x$ the other extremity of the affine diameter starting at $x$.
Thus $\Diam\left(  K\right)  $ is the graph of $\Delta_{K}$.

It is easy to see that two distinct affine diameters of $K$ always intersect
inside $K$.

Thus $\Delta_{K}$ is locally monotone, in the following sense: for any
homeomorphisms $\phi:\left[  0,1\right]  \rightarrow U\subset\bd K$,
$\psi:\left[  0,1\right]  \rightarrow V\subset\bd K$ such that $x\in U$ and
$\Delta_{K}\left(  x\right)  \in V$, $\psi^{-1}\circ\Delta_{K}\circ\phi$ is
monotone. It follows that the dimension of the graph of $\Delta_{K}$, cannot
exceed the dimension of $\bd K$.
\end{proof}

Let $\mathcal{V}$ be a basis of open sets of $\mathbb{R}$. For $V\in
\mathcal{V}$ and $N$ a positive integer, define%

\begin{align*}
U_{V,N}^{\kappa}  &  \eqd\set(:K\in\bds|\exists\delta\in\left]  0,\frac{1}%
{N}\right[  \text{s.t. }\frac{\ln P_{\delta}(\lgt(\MS(K))\cap V)}{-\ln
(\delta/2)}>\kappa-\frac{1}{N}:)\text{,}\\
W_{V}  &  \eqd\set(:K\in\bds|\sp(K)\cap V=\emptyset:)\text{.}%
\end{align*}

\begin{lem}
\label{lem:densU-d1}For $d=1$, for all $V\in\mathcal{V}$ and $N>0$,
$U_{V,N}^{1/2}$ is dense in $\bds\setminus W_{V}$.
\end{lem}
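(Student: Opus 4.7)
Let $K_{0} \in \bds \setminus W_{V}$ and fix $\varepsilon > 0$; the goal is to exhibit $K_{*} \in U_{V,N}^{1/2}$ with $d_{PH}(K_{0},K_{*}) < \varepsilon$. Since $\sp(K_{0}) \cap V \neq \emptyset$, pick a double normal $b_{0} = (x_{0}, y_{0}) \in \B(K_{0})$ with $\lgt(b_{0}) \in V$. The heuristic behind the exponent $\tfrac{1}{2}$ is Lemma~\ref{lem:Holder}: the map $\lgt$ being $2$-H\"older on $\B(K)$, a $\delta$-separated set of lengths is produced most economically by double normals spread along a $1$-dimensional slice of the boundary, and such a configuration can carry on the order of $\delta^{-1/2}$ distinct values. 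The plan is to realize this bound by an explicit perturbation.

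First I would apply Lemma~\ref{lem:spher-cap} to obtain $K_{1}$ whose boundary contains two antipodal circular arcs $C^{+}$, $C^{-}$ of common centre $o$ and radius $R$, with $C^{+}$ close to $x_{0}$ and $2R \in V$ (openness of $V$ permits slight adjustment of $R$). Next fix a large integer $n$, set $L \eqd n^{-1}$ and $\delta \eqd L/n = n^{-2}$, and pick angular positions $\alpha_{1}, \dots, \alpha_{n}$ on the arc of $C^{+}$ together with heights $h_{1} < \cdots < h_{n}$ in $(0, L)$ with $h_{i+1} - h_{i} \geq \delta$. Writing $x_{i} \in C^{+}$ for the point at angle $\alpha_{i}$, graft onto $C^{-}$ near $2o - x_{i}$ a narrow convex bump $B_{i}$ whose apex $y_{i}$ lies on the ray from $o$ in direction $o - x_{i}$ at distance $R + h_{i}$ from $o$ and whose tangent at $y_{i}$ is perpendicular to $\overline{x_{i} y_{i}}$; one may take $B_{i}$ to be a small spherical cap with pole at $y_{i}$. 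Setting $K_{*} \eqd \conv(K_{1} \cup \bigcup_{i} B_{i})$, for $L$ small and the bumps narrow and angularly well separated, $K_{*}$ is convex, each $y_{i}$ is an exposed boundary point with the prescribed tangent, and $d_{PH}(K_{0}, K_{*}) < \varepsilon$.

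Each pair $(x_{i}, y_{i}) \in \B(K_{*})$ then satisfies $\lgt(x_{i}, y_{i}) = 2R + h_{i} \in V$. Strict maximality follows from Lemma~\ref{lem:strict-max-criterion}: since $C^{+}$ and the cap of $B_{i}$ have strict positive curvature, an elementary computation bounds the angles $\angle y_{i} x_{i} x'$ and $\angle x_{i} y_{i} y'$ uniformly below $\pi/2$ on a small neighbourhood of $(x_{i}, y_{i})$ in $\C(K_{*})$. Hence $\{2R + h_{i}\}_{i=1}^{n}$ is a $\delta$-separated subset of $\lgt(\MS(K_{*})) \cap V$ of cardinality $n$, so
\[
\frac{\ln P_{\delta}(\lgt(\MS(K_{*})) \cap V)}{-\ln(\delta/2)} \geq \frac{\ln n}{2\ln n + \ln 2} \xrightarrow[n \to \infty]{} \frac{1}{2} > \frac{1}{2} - \frac{1}{N};
\]
choosing $n$ also large enough that $\delta = n^{-2} < 1/N$ places $K_{*} \in U_{V,N}^{1/2}$. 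The main technical difficulty, I expect, lies in the convex-hull step: some care is needed to ensure that the bumps do not introduce flat facets shadowing the apexes $y_{i}$, that the chords $(x_{i}, y_{i})$ do not interfere with one another, and that each of them is genuinely strictly maximizing (rather than merely a double normal); Lemma~\ref{lem:make-max-strict} provides a smoothing fallback should strict maximality need reinforcement.
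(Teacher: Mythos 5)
Your overall strategy coincides with the paper's: use Lemma~\ref{lem:spher-cap} to replace the boundary near the given double normal by antipodal circular arcs of radius $R$ about a centre $o$, then plant $n$ extra extreme points at radii $R+h_{i}$ with the heights $h_{i}$ separated by $\delta\sim n^{-2}$, so that the resulting $n$ strictly maximizing chords have $\delta$-separated lengths $2R+h_i$ in $V$ and the quotient $\ln n/(-\ln\delta)$ tends to $1/2$. The paper does this symmetrically, adding vertices $v_{i}=o+R_{i}u(i\Theta/n)$ together with their reflections $v_i'=2o-v_i$ and taking the convex hull, whereas you leave $C^{+}$ untouched and graft bumps $B_{i}$ only near $C^{-}$; both variants of the construction can in principle be made to work, and the computation of the limiting exponent is the same.

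The gap is exactly the step you yourself flag as the ``main technical difficulty'': you never verify that each $y_{i}$ survives as an exposed boundary point of $K_{*}=\conv(K_{1}\cup\bigcup_{i}B_{i})$ with the prescribed normal, nor that $(x_{i},y_{i})$ is then strictly maximizing. This requires a quantitative relation between the angular spacing of the apices and the height increments $h_{i+1}-h_{i}$, which you do not impose: you fix $\delta=n^{-2}$ independently of $R$ and of the angular spread, and leave the $\alpha_{i}$ unspecified, so there is nothing to check the extremality against. The paper's proof ties $\delta=R\Theta^{2}/(4n^{2})$ to the angular step $\Theta/n$ and proves, via the inequality $\cos\theta\leq1-\theta^{2}/3$ on $[0,\Theta]$, that every triangle $ov_{i}v_{j}$ is acute; this is precisely what yields $v_{i}\in\bd K_{3}$ and the hypotheses of Lemma~\ref{lem:strict-max-criterion}. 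Note also that Lemma~\ref{lem:make-max-strict}, which you invoke as a fallback, does not close this gap: it promotes chords that are \emph{already} double normals of the body to strict maximizers after a small deformation, but the question here is whether $(x_{i},y_{i})$ is a double normal of $K_{*}$ at all, i.e.\ whether $y_{i}\in\bd K_{*}$ --- which is exactly what the acute-angle estimate is there to secure.
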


\begin{proof}
Fix $K_{0}\in\bds\setminus W_{V}$, $\left(  x_{0},y_{0}\right)  \in\B\left(
K\right)  $ such that $\left\Vert x_{0}-y_{0}\right\Vert \in V$, and
$\varepsilon>0$. By Lemma \ref{lem:spher-cap}, there exits $K_{1}\in\bds$ such
that $d_{PH}\left(  K_{0},K_{1}\right)  <\varepsilon$ and $\bd K_{1}$ contains
two circle arcs $C^{\pm}$ sharing the same centre $o$, symmetrical to each
other with respect to $o$, and such that the line $x_{0}y_{0}$ intersects
$\bd
K_{1}$ in two points $x\in C^{+}$ and $y\in C^{-}$. We may also assume that
$2R\overset{\mathrm{def}}{=}\left\Vert x-y\right\Vert \in V$. By considering
even smaller arcs, one can assume without loss of generality that $x$ is the
midpoint of $C^{+}$; let $a$, $b$ be its extremities. Put
$\Theta=\measuredangle xoa$. Making if necessary $C^{+}$ even smaller, we may
also assume without loss of generality that
\[
\cos\theta\leq1-\theta^{2}/3
\]
for any $\theta\in\left[  0,\Theta\right]  $. The lines tangent to $C^{+}$ at
$a$ and $b$ intersect at some point $c$ collinear with $o$ and $x$. Note that
the union $K_{2}$ of the triangle $abc$ and $K_{1}$ is convex. Making if
necessary $C^{+}$ even smaller, we may assume without loss of generality that
$d_{PH}\left(  K_{0},K_{2}\right)  <\varepsilon$. Let $u\left(  \theta\right)
$ be a unit vector such that $\measuredangle\left(  u\left(  \theta\right)
,a-o\right)  =\theta$ and $Ru\left(  \theta\right)  \in C^{+}$. Now choose a
positive integer $n$ and define for $i=0,\ldots,n$%
\begin{align*}
\delta &  =R\Theta^{2}/4n^{2}\text{,}\\
R_{i}  &  =R+i\delta\text{,}\\
v_{i}  &  =o+R_{i}u\left(  i\Theta/n\right)  \text{.}%
\end{align*}
Note that, for $i>0$,
\[
R_{i}\cos\frac{i\Theta}{n}<R\left(  1-\frac{\Theta^{2}}{12n^{2}}\right)
<R\text{,}%
\]
whence all the $v_{i}$ belong to the triangle $abc$. Let $K_{3}$ be the convex
hull of $K_{2}$, the points $v_{i}$ and their symmetrical points
$v_{i}^{\prime}$ with respect to $o$. Since $K_{1}\subset K_{3}\subset K_{2}$
we have $d_{PH}\left(  K_{0},K_{3}\right)  <\varepsilon$.

We claim that any triangle $ov_{i}v_{j}$ with $1\leq j<i\leq n$ is acute.
Since $R_{i}>R_{j}$, it is clear that $\measuredangle ov_{i}v_{j}<\pi/2$.
Moreover $\measuredangle ov_{j}v_{i}$ is acute if and only if $\frac{R_{j}%
}{R_{i}}>\cos\frac{\left(  j-i\right)  \Theta}{n}$. Now%
\[
\frac{R_{j}}{R_{i}}=\frac{R_{j}-\left(  j-i\right)  \delta}{R_{i}}%
=1-\frac{\left(  j-i\right)  \Theta^{2}}{4n^{2}}\frac{R}{R_{i}}\geq
1-\frac{\Theta^{2}\left(  j-i\right)  }{4n^{2}}\text{.}%
\]
On the other hand
\[
\cos\frac{\left(  j-i\right)  \Theta}{n}\leq1-\frac{\left(  j-i\right)
^{2}\Theta^{2}}{3n^{2}}\leq1-\frac{\left(  j-i\right)  \Theta^{2}}{3n^{2}%
}\text{,}%
\]
and the claim is proven. Moreover, $\measuredangle v_{n}ob<\pi/2$ because
$r_{n}>R$. It follows that $v_{i}\in\bd K_{3}$ and $\left(  v_{i}%
,v_{i}^{\prime}\right)  \in\MS(K_{3})$, $1\leq i\leq n$. Obviously the set
$\{\lgt(v_{i})|i\in\mathbb{N}_{n}\}$ is a $\delta$-set; for $n$ large enough,
it is included in $V$. It follows that $P_{\delta}(\lgt(\MS(K_{3}))\cap V)\geq
n$. Since $\lim\frac{\ln n}{-\ln\delta}=\frac{1}{2}$, for $n$ large enough,
$K_{3}\in U_{V,N}^{1/2}$.
\end{proof}

\begin{lem}
\label{lem:densU-d3}For $d\geq3$, for all $V\in\mathcal{V}$, $U_{V,N}^{1}$ is
dense in $\bds\setminus W_{V}$.
\end{lem}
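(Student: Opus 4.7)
The plan is to mimic the proof of Lemma \ref{lem:densU-d1}, replacing its planar arrangement of perturbation points by a higher-dimensional one that exploits the extra room available when $d\geq 3$. Fix $K_0\in\bds\setminus W_V$ and $\varepsilon>0$. I pick $(x_0,y_0)\in\B(K_0)$ with $\|x_0-y_0\|\in V$, let $o$ be the midpoint of $x_0y_0$, and apply Lemma \ref{lem:spher-cap} to obtain $K_2\in\bds$ with $d_{PH}(K_0,K_2)<\varepsilon/2$ whose boundary contains two antipodal spherical caps $C^\pm\subset\IS(o,R)$, symmetric through $o$, with $2R\in V$ and $x_0$ the pole of $C^+$.

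For some small $\delta\in(0,1/N)$ and $n=\lceil 1/\delta\rceil$, the key step is to choose $n$ unit vectors $u_1,\ldots,u_n$ in the direction cap $(C^+-o)/R$, clustered near $(x_0-o)/R$, satisfying the square-root separation
\[
\|u_i-u_j\|^2 \;\geq\; \frac{3(j-i)\delta}{R}\qquad(1\leq i<j\leq n).
\]
Setting $v_k=o+(R+k\delta)u_k$, $v_k'=2o-v_k$, and $K_3=\conv(K_2\cup\{v_k,v_k':1\leq k\leq n\})$ (slightly dilating $K_2$ as in the end of the proof of Lemma \ref{lem:perfect} to guarantee each $v_k\in\bd K_3$), the same acute-triangle computation as in Lemma \ref{lem:densU-d1} will carry over: the displayed inequality is precisely the condition for every triangle $ov_iv_j$ to be acute, and Lemma \ref{lem:strict-max-criterion} then gives $(v_k,v_k')\in\MS(K_3)$ for each $k$. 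For $\delta$ small, each length $2(R+k\delta)$ lies in $V$ and $d_{PH}(K_0,K_3)<\varepsilon$.

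The heart of the proof, and the main obstacle, is the existence of these $n\sim 1/\delta$ unit vectors inside a small direction cap. Parametrizing the cap by its tangent $d$-ball and writing $c=\sqrt{3\delta/R}$, the question reduces to packing $n$ points $\theta_k\in B^d(0,\Theta)$ with $\|\theta_i-\theta_j\|\geq c\sqrt{|i-j|}$. The diameter bound alone allows $n\leq O((\Theta/c)^2)=O(1/\delta)$, and the issue is whether this is attained. In dimension $d=1$ it is not (one only achieves $n\sim 1/\sqrt\delta$, whence the exponent $1/2$ in Lemma \ref{lem:densU-d1}), but for $d\geq 3$ the diameter bound is saturated by a suitably spread arrangement exploiting the extra angular dimensions; concretely, the $\theta_k$ can be placed along a three-dimensional curve combining angular oscillation with a slow transverse drift, tuned so that the required lower bound on pairwise distances holds for both consecutive and distant pairs of indices.

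Once the $u_k$ are placed, the $n$ values $\{2(R+k\delta):1\leq k\leq n\}$ will form a $(2\delta)$-separated subset of $\lgt(\MS(K_3))\cap V$, so $P_{2\delta}(\lgt(\MS(K_3))\cap V)\geq n\sim 1/\delta$ and $\ln P_{2\delta}/(-\ln\delta)\to 1$ as $\delta\to 0$. For $\delta$ small enough this exceeds $1-1/N$, giving $K_3\in U_{V,N}^1$ and proving density.
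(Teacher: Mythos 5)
The paper's own proof of this lemma is much shorter and takes a completely different route: it invokes Kuiper's Theorem~\ref{qthm:Kuiper}, which for $d\geq 3$ provides a body $K^\star$ whose set of double-normal lengths is a non-degenerate interval. Grafting a small rescaled copy of $K^\star$ onto $K_0$ near one double normal (by the same welding as in Lemma~\ref{lem:spher-cap}) produces $K_1$ with $\sp(K_1)\cap V$ containing an interval $[a,a+2\Delta]$; picking $M+1$ equally spaced lengths $a+i\delta_0$ in this interval and applying Lemma~\ref{lem:make-max-strict} gives $P_{\delta_0}(\lgt(\MS(K_2))\cap V)\geq M=2\Delta/\delta_0$, so the ratio $\ln P_{\delta_0}/(-\ln(\delta_0/2))$ tends to $1$ as $M\to\infty$. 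The combinatorial packing difficulty is bypassed entirely.

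Your approach, which mimics the direct constructions used for $d=1$ and $d=2$, is not wrong in spirit but it hinges on a claim that you yourself flag as ``the heart of the proof and the main obstacle'' and then do not prove: that one can place $n\sim 1/\delta$ unit directions $u_1,\dots,u_n$ in a small spherical cap so that $\|u_i-u_j\|^2 \gtrsim |i-j|\,\delta$ for \emph{every} pair $i\neq j$. After flattening, this asks for a map $k\mapsto\theta_k\in B^d$ with $\|\theta_i-\theta_j\|\gtrsim\sqrt{|i-j|/n}$, i.e.\ a bi-Lipschitz copy of the discrete $\frac{1}{2}$-snowflake of $\{1,\dots,n\}$ inside a $d$-ball, and you only gesture at ``a three-dimensional curve combining angular oscillation with a slow transverse drift.'' This is precisely the delicate point: the paper's own radial--angular spiral in Lemma~\ref{lem:densU-d2} achieves only the exponent $3/4$ for $d=2$, and Remark~\ref{Rmk:34} conjectures that $3/4$ is sharp there, so the arrangement you need cannot exist when $d=2$; any construction for $d\geq 3$ must use the extra dimension essentially and must be verified for all index pairs, including the awkward ``carry-over'' pairs with small $|i-j|$ lying in distant blocks of whatever hierarchical layout is chosen. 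As written, the proposal identifies the obstacle but leaves it unresolved, so there is a genuine gap.

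A smaller issue: with $n=\lceil 1/\delta\rceil$ and $v_k=o+(R+k\delta)u_k$, the total radial excursion $n\delta$ is of order $1$ rather than small; to keep all $v_k$ within the cap region built in Lemma~\ref{lem:spher-cap} you need $n\delta\lesssim R\Theta^2$, hence $n$ of order $R\Theta^2/\delta$. This affects only constants, not the exponent.
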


\begin{proof}
Choose $K_{0}\in\bds\setminus W_{V}$, $b\in\B\left(  K_{0}\right)  $ such that
$\lgt_{K_{0}}\left(  b\right)  \in V$, and $\varepsilon>0$. We have to prove
that there exists $K\in U_{V,N}^{1}$ such that $d_{H}(K_{0},K)<\varepsilon$.

By \textquotedblleft combination\textquotedblright\ of $K_{0}$ and the convex
bodies $K^{\star}$ provided by Theorem \ref{qthm:Kuiper}, one can find $K_{1}$
such that $d(K_{0},K_{1})<\varepsilon$ and $\sp\left(  K_{1}\right)  \cap V$
contains an interval $[a,a+2\Delta]$. Here the convex bodies are combined
using the same construction as in the proof of Lemma \ref{lem:spher-cap} at
the neighbourhood of $b$, replacing the sphere by a rescaled and displaced
copy of $K^{\star}$.

Put $\delta_{0}\eqd\frac{2\Delta}{M}$, where $M$ is chosen large enough to
ensure that $\delta_{0}<\frac{1}{N}$ and
\[
\frac{\ln\Delta}{\ln\Delta-\ln M}<\frac{1}{N}\text{.}%
\]
Let $b_{i}\in\B(K_{1})$ ($i=0,\ldots,M$) be a double normal of length
$a+i\delta_{0}$. By Lemma \ref{lem:make-max-strict}, one can find $K_{2}%
\in\bds$ such that $d_{H}(K_{0},K_{2})<\varepsilon$ and $b_{i}\in\MS(K_{2})$.
Now,
\[
P_{\delta_{0}}(\lgt(\MS(K_{2}))\cap V)\geq M=\frac{\Delta}{\delta_{0}%
/2}\text{,}%
\]
whence
\[
\frac{\ln(P_{\delta_{0}}(\lgt(\MS(K_{2}))\cap V))}{-\ln(\delta_{0}/2)}%
\geq1-\frac{\ln\Delta}{\ln\Delta-\ln M}>1-\frac{1}{N}%
\]
by the choice of $M$. Hence $K_{2}$ belongs to $U_{V,N}^{1}$ and the proof is complete.
\end{proof}

The next technical lemma is needed for the case $d=2$.

\begin{lem}
\label{lem:dimsp-d2-angles}Consider the classical parametrization of the unit
sphere
\[
\phi:(\lambda,\theta)\mapsto(\cos\lambda\cos\theta,\cos\lambda\sin\theta
,\sin\lambda)\text{,}%
\]
choose $R>0$, $A>0$, $T\in\left(  0,\frac{\pi}{4}\right)  $ and define for any
natural integer $m$ and any $\left(  i,j\right)  \in\mathbb{N}_{m^{2}}%
^{0}\times\mathbb{N}_{m}^{0}$%
\begin{align*}
\delta &  =\frac{RT^{2}}{16m^{4}}\text{,}\\
r_{ij}  &  =R+A-\left(  jm^{2}+i\right)  \delta\text{,}\\
v_{ij}  &  =r_{ij}\phi\left(  \frac{iT}{m^{2}},\frac{jT}{m}\right)
\hfill\text{.}%
\end{align*}
Then, for $m$ large enough, for any $\left(  i,j\right)  \neq\left(
i^{\prime},j^{\prime}\right)  \in\mathbb{N}_{m^{2}}^{0}\times\mathbb{N}%
_{m}^{0}$,
\[
\left\langle v_{i^{\prime}j^{\prime}},v_{i^{\prime}j^{\prime}}-v_{ij}%
\right\rangle >0\text{.}%
\]

\end{lem}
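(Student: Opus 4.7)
The inequality $\langle v_{i'j'}, v_{i'j'}-v_{ij}\rangle>0$ is equivalent to $r_{i'j'} > r_{ij}\cos\alpha$, where $\alpha$ is the angle between the unit vectors $\phi(iT/m^2,jT/m)$ and $\phi(i'T/m^2,j'T/m)$. Setting $k:=jm^2+i$ and $k':=j'm^2+i'$, the constraints $0\le i,i'<m^2$ force $k\neq k'$ whenever $(i,j)\neq(i',j')$, and they also make the two unit vectors distinct, so $\alpha>0$ and $\cos\alpha<1$. If $k'<k$ then $r_{i'j'}>r_{ij}>r_{ij}\cos\alpha$ and there is nothing to prove. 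The real work is the case $k'>k$, in which the target inequality rewrites as
\[
(k'-k)\delta < r_{ij}(1-\cos\alpha).
\]

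The key lower bound on $1-\cos\alpha$ comes from the exact identity
\[
1-\cos\alpha = (1-\cos\mu) + \cos\lambda\cos\lambda'(1-\cos\tau),
\]
with $\lambda=iT/m^2$, $\lambda'=i'T/m^2$, $\mu=\lambda'-\lambda$, and $\tau=(j'-j)T/m$. Since $T<\pi/4$ gives $\cos\lambda\cos\lambda'>1/2$, and since the elementary estimate $1-\cos\theta\ge\theta^2/3$ holds on $[0,\pi/4]$ (indeed on $[0,2]$), one obtains
\[
1-\cos\alpha > \frac{\mu^2}{3} + \frac{\tau^2}{6}.
\]

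Next, I would split on $\Delta j := j'-j$. If $\Delta j=0$, then $\Delta i:=i'-i>0$ (else $k'\le k$) and $k'-k=\Delta i$, so the target reduces to $\Delta i\cdot RT^2/(16m^4) < r_{ij}(\Delta i)^2 T^2/(3m^4)$, which holds as soon as $r_{ij}\ge R$ (since $3/16 < 1$). If $\Delta j\ge 1$, then $k'-k\le(\Delta j+1)m^2\le 2\Delta j\cdot m^2$, hence $(k'-k)\delta\le \Delta j\cdot RT^2/(8m^2)$, whereas $r_{ij}(1-\cos\alpha)>r_{ij}(\Delta j)^2 T^2/(6m^2)\ge \Delta j\cdot RT^2/(6m^2)$ (using $r_{ij}\ge R$ and $\Delta j\ge 1$); the inequality is then strict because $1/6>1/8$.

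The bound $r_{ij}\ge R$ is the only place where \emph{``$m$ large enough''} is invoked: since $k\le m^3-1$, one has $r_{ij}\ge R+A-(m^3-1)RT^2/(16m^4)$, which exceeds $R$ as soon as $m>RT^2/(16A)$. The only obstacle is really bookkeeping --- the constant $16$ in $\delta=RT^2/(16m^4)$ and the disparate grid resolutions ($T/m^2$ in $\lambda$ versus $T/m$ in $\theta$) are calibrated precisely so that the pure-$\Delta i$ and the $\Delta j\ge 1$ subcases both close with the same $\delta$; once the quadratic identity for $1-\cos\alpha$ is in hand, the rest is elementary.
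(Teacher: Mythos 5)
Your proof is correct, and it is a welcome sharpening of the paper's argument rather than merely a reproduction of it.

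Both proofs begin identically: the inequality reduces (after factoring out $r_{i'j'}>0$) to $r_{i'j'}>r_{ij}\cos\alpha$, where $\cos\alpha$ equals the quantity $P$ in the paper. The paper then proceeds by contradiction, extracting convergent subsequences $(m_p,i_p,j_p,i'_p,j'_p)$ and splitting into a diagonal case $j_p=j'_p$ and a case $j'_p>j_p$, in which a moderately involved product-to-sum rewriting of $P$ (the identity $P=\sin 2\alpha_p\sin\hat\alpha_p\sin^2(\hat\beta_p/2)+Q\cos\hat\alpha_p$) is needed. You bypass the compactness entirely by observing the cleaner and exact decomposition
\[
1-\cos\alpha=(1-\cos\mu)+\cos\lambda\cos\lambda'\,(1-\cos\tau),
\]
which, together with $\cos\lambda\cos\lambda'>\tfrac12$ and $1-\cos\theta\ge\theta^2/3$ on $[0,\pi/4]$, yields an explicit lower bound $1-\cos\alpha\ge\mu^2/3+\tau^2/6$. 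The same case split on $\Delta j$ then closes by direct comparison of constants ($3/16<1$ when $\Delta j=0$, and $1/8<1/6$ when $\Delta j\ge1$), with the sole use of ``$m$ large enough'' isolated to guaranteeing $r_{ij}\ge R$, for which $m>RT^2/(16A)$ suffices. What your version buys is an effective bound on $m$, a transparent explanation of why $\delta$ must scale as $m^{-4}$ and why the grid resolutions in $\lambda$ and $\theta$ are $m^{-2}$ and $m^{-1}$, and the elimination of the subsequence bookkeeping. The two proofs rest on the same estimate and the same case split, so conceptually they are the same, but yours is tighter and more self-contained.

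One small stylistic caveat: at a couple of points you write ``$>$'' where the elementary bound on $1-\cos\theta$ only directly gives ``$\ge$''; the final strict inequality in each case comes from the numerical comparisons ($R/16<R/3$, respectively $1/8<1/6$), which is indeed how you close the argument, so this is only a matter of phrasing, not a gap.
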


\begin{proof}
Put%
\[
D=\left\langle v_{i^{\prime}j^{\prime}},v_{i^{\prime}j^{\prime}}%
-v_{ij}\right\rangle \text{.}%
\]
Since for $i+m^{2}j\leq i^{\prime}+m^{2}j^{\prime}$, $\left\Vert v_{i^{\prime
}j^{\prime}}\right\Vert \leq\left\Vert v_{i,j}\right\Vert $, it is sufficient
to check the sign of $D$ for $i^{\prime}+m^{2}j^{\prime}>i+m^{2}j$. A
straightforward computation shows that%
\[
D=r^{\prime}(r^{\prime}-Pr)\text{,}%
\]
with%

\begin{align*}
r  &  =r_{ij}\text{,}\\
r^{\prime}  &  =r_{i^{\prime}j^{\prime}}\text{,}\\
P  &  =\cos\frac{Ti}{m^{2}}\cos\frac{Ti^{\prime}}{m^{2}}\cos\frac{T\left(
j^{\prime}-j\right)  }{m}+\sin\frac{Ti}{m^{2}}\sin\frac{Ti^{\prime}}{m^{2}%
}\text{.}%
\end{align*}
Thus $D>0$ if and only if%
\[
P<\frac{r^{\prime}}{r}\text{.}%
\]
We claim that these inequalities hold for $m$ large enough, for any $\left(
i,j\right)  \neq\left(  i^{\prime},j^{\prime}\right)  \in\mathbb{N}_{m^{2}%
}^{0}\times\mathbb{N}_{m}^{0}$. Assume, on the contrary, that there exist
sequences $m_{p}$, $i_{p}$, $j_{p}$, $i_{p}^{\prime}$ and $j_{p}^{\prime}$
such that $m_{p}\rightarrow\infty$, $i/m_{p}^{2}$, $i_{p}^{\prime}/m_{p}^{2}$,
$j_{p}/m_{p}$, $j_{p}^{\prime}/m_{p}\in\left[  0,1\right]  $ and the
corresponding value of $D$ is non-positive. Extracting if necessary
subsequences, one may assume without loss of generality that the four ratios
are converging in $\left[  0,1\right]  $; denote by $\alpha$, $\alpha^{\prime
}$, $\beta$ and $\beta^{\prime}$ the respective limits of $Ti_{p}/m_{p}^{2}$,
$Ti_{p}^{\prime}/m_{p}^{2}$, $Tj_{p}/m_{p}$ and $Tj_{p}^{\prime}/m_{p}$. Then
$P$ converges to
\[
\cos\alpha\cos\alpha^{\prime}\cos\left(  \beta^{\prime}-\beta\right)
+\sin\alpha\sin\alpha^{\prime}\leq1\text{,}%
\]
with equality if and only if $\alpha^{\prime}=\alpha$ and $\beta^{\prime
}=\beta$. On the other hand, $r^{\prime}/r$ tends to $1$. It follows that, if
$\alpha\neq\alpha^{\prime}$ or $\beta\neq\beta^{\prime}$, a contradiction is
found. From now on, we assume $\alpha^{\prime}=\alpha$ and $\beta^{\prime
}=\beta$. We now discuss two cases.

\textbf{Case 1.} There exists an arbitrarily large $p$ such that $j_{p}%
=j_{p}^{\prime}$. By extracting suitable subsequences, we may assume without
loss of generality that $j_{p}=j_{p}^{\prime}$ (and so $i_{p}^{\prime}>i_{p}$)
for all $p$. Then, since $\frac{T\left(  i_{p}^{\prime}-i_{p}\right)  }%
{m_{p}^{2}}\rightarrow\alpha^{\prime}-\alpha=0$, for $m$ large enough,
\[
P=\cos\frac{T\left(  i_{p}^{\prime}-i_{p}\right)  }{m_{p}^{2}}<1-\frac
{T^{2}\left(  i_{p}^{\prime}-i_{p}\right)  ^{2}}{4m_{p}^{4}}\leq1-\frac
{T}{4m_{p}^{4}}^{2}\left(  i_{p}^{\prime}-i_{p}\right)  \text{.}%
\]
On the other hand,
\[
\frac{r^{\prime}}{r}=\frac{r-\left(  i_{p}^{\prime}-i_{p}\right)  \delta}%
{r}>1-\frac{T^{2}}{16m_{p}^{4}}\left(  i_{p}^{\prime}-i_{p}\right)
\]
and we get a contradiction.

\textbf{Case 2. }For $p$ large enough, $\hat{\jmath}_{p}\overset{\mathrm{def}%
}{=}j_{p}^{\prime}-j_{p}>0$. By extracting suitable subsequences, we may
assume without loss of generality that this inequality holds for all $p$.
Define $\alpha_{p}$, $\hat{\alpha}_{p}$ and $\hat{\beta}_{p}$ by $i_{p}%
=\alpha_{p}m_{p}^{2}/T$, $i_{p}^{\prime}=\left(  \alpha_{p}+\hat{\alpha}%
_{p}\right)  m_{p}^{2}/T$ and $\hat{\jmath}_{p}=m_{p}\hat{\beta}_{p}$; then
$\lim_{p\rightarrow\infty}\hat{\alpha}_{p}=\lim_{p\rightarrow\infty}\hat
{\beta}_{p}=0$ and by straightforward computations%
\begin{align*}
P  &  =\sin2\alpha_{p}\sin\hat{\alpha}_{p}\sin^{2}\frac{\hat{\beta}_{p}}%
{2}+Q\cos\hat{\alpha}_{p}\leq\frac{\hat{\beta}_{p}^{2}}{4}\left\vert
\hat{\alpha}_{p}\right\vert +Q\text{,}\\
Q  &  =\frac{1}{2}\left(  \left(  \cos2\alpha_{p}+1\right)  \cos\hat{\beta
}_{p}-\cos2\alpha_{p}+1\right)  \text{.}%
\end{align*}
Since $\hat{\beta}_{p}\rightarrow0$, for $p$ large enough $\cos\hat{\beta}%
_{p}<1-\hat{\beta}_{p}^{2}/3$ whence%
\[
Q<1-\frac{\hat{\beta}_{p}^{2}}{6}\text{.}%
\]
For $p$ large enough, $\left\vert \hat{\alpha}_{p}\right\vert <\frac{2}{21}$,
whence%
\[
P<1-\frac{\hat{\beta}_{p}^{2}}{7}=1-\frac{\hat{\jmath}_{p}^{2}T^{2}}%
{7m_{p}^{2}}\leq1-\frac{\hat{\jmath}_{p}T^{2}}{7m_{p}^{2}}\text{.}%
\]

On the other hand%
\begin{align*}
\frac{r^{\prime}}{r}  &  =\frac{r-\left(  \hat{\jmath}_{p}m^{2}+i_{p}^{\prime
}-i_{p}\right)  \delta}{r}\\
&  \geq1-\left(  \hat{\jmath}_{p}+1\right)  \frac{T^{2}}{16m_{p}^{2}}%
\geq1-\frac{\hat{\jmath}_{p}T^{2}}{8m_{p}^{2}}\text{,}%
\end{align*}
and we get another contradiction. This completes the proof.
\end{proof}

\begin{lem}
\label{lem:densU-d2}For $d=2$, for all $V\in\mathcal{V}$, $U_{N}^{3/4}$ is
dense in $\bds\setminus W_{V}$.
\end{lem}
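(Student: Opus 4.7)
My approach parallels Lemma \ref{lem:densU-d1}, with Lemma \ref{lem:dimsp-d2-angles} playing the role of the planar acute-triangle computation. Given $K_0\in\bds\setminus W_V$ and $\varepsilon>0$, I would first pick a double normal $(x_0,y_0)\in\B(K_0)$ with $\|x_0-y_0\|\in V$, apply Lemma \ref{lem:spher-cap} to obtain $K_1$ with $d_{PH}(K_0,K_1)<\varepsilon/2$ whose boundary contains two antipodal spherical caps $C^\pm$ on a sphere $\IS(o,R)$, and arrange (by shrinking the caps and adjusting $R$) that the line $\overline{x_0y_0}$ meets $C^+$ at an interior point $x$ with $2R\in V$. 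I would translate so $o$ is the origin, set $x=Re_1$, choose $T\in(0,\pi/4)$ small enough that the parametrization $\phi$ of Lemma \ref{lem:dimsp-d2-angles} covers a neighbourhood of $x$ in $C^+$, and enlarge $K_1$ to $K_1'$ (still within $\varepsilon$ of $K_0$) by adjoining a solid region bounded by tangent hyperplanes to $\IS(o,R)$ along the rims of $C^\pm$. This is the three-dimensional analog of the triangle $abc$ from the $d=1$ proof and leaves room for radii slightly larger than $R$.

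Next, fixing a large integer $m$ and a small $A>0$, I would apply Lemma \ref{lem:dimsp-d2-angles} to produce $m^3$ points $v_{ij}=r_{ij}\phi(iT/m^2,jT/m)$ with distinct radii $r_{ij}\in(R,R+A]$ separated by $\delta=RT^2/(16m^4)$, and set $K_2=\conv(K_1'\cup\{v_{ij}\}\cup\{-v_{ij}\})$; for $A$ small, $d_{PH}(K_0,K_2)<\varepsilon$. I then need to verify that each $(v_{ij},-v_{ij})$ is a strictly maximizing chord. The conclusion of Lemma \ref{lem:dimsp-d2-angles}, applied with its $(i',j')$ set to our $(i,j)$, reads $\langle v_{ij},v_{kl}\rangle<r_{ij}^2$ for all $(k,l)\neq(i,j)$, so the hyperplane $H_{ij}=\{p:\langle v_{ij},p\rangle=r_{ij}^2\}$ strictly separates $v_{ij}$ from every other $v_{kl}$; the same holds for each $-v_{kl}$ since $\langle v_{ij},v_{kl}\rangle\geq 0$ ($v_{ij},v_{kl}\in C^+$ has angular spread less than $\pi/2$). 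For $p\in K_1'$, the direction $v_{ij}/r_{ij}$ lies within angle $T$ of $e_1$, in which direction $K_1'$ is extremal on $\IS(o,R)$ by construction, so $\langle v_{ij},p\rangle\leq Rr_{ij}<r_{ij}^2$ with a uniform positive gap. Hence $H_{ij}$ supports $K_2$ only at $v_{ij}$, symmetrically for $-v_{ij}$, and Lemma \ref{lem:strict-max-criterion} will upgrade the resulting double normal to a strictly maximizing chord.

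The $m^3$ lengths $\{2r_{ij}\}$ would then form a $2\delta$-separated subset of $\lgt(\MS(K_2))\cap V$ (the inclusion in $V$ holding for $A$ small, since $2R\in V$ is open), so $P_{2\delta}(\lgt(\MS(K_2))\cap V)\geq m^3$ and
\[
\frac{\ln P_{2\delta}(\lgt(\MS(K_2))\cap V)}{-\ln\delta}\geq\frac{3\ln m}{4\ln m+\ln(16/(RT^2))}\longrightarrow\frac{3}{4}
\]
as $m\to\infty$. Taking $m$ large enough that this ratio exceeds $3/4-1/N$ and $2\delta<1/N$ puts $K_2$ into $U_{V,N}^{3/4}$ within $\varepsilon$ of $K_0$, establishing density. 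The principal obstacle is the geometric verification that each $v_{ij}$ becomes a vertex of $K_2$ with outward normal $v_{ij}/r_{ij}$: the spherical-angular inequalities of Lemma \ref{lem:dimsp-d2-angles} take care of the comparisons among the $v_{kl}$, while the comparison with $K_1'$ relies on placing the $v_{ij}$ at radii only slightly above $R$ in directions close to the original double normal's axis.
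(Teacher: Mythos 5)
You have correctly identified all the right ingredients — Lemma \ref{lem:spher-cap} to produce antipodal spherical caps on $\IS(o,R)$, Lemma \ref{lem:dimsp-d2-angles} to place $m^3$ points $v_{ij}$ at distinct radii, Lemma \ref{lem:strict-max-criterion} to verify strict maximality, and the $\frac{3\ln m}{4\ln m+O(1)}\to\frac34$ count — and these are exactly the ingredients of the paper's proof. However, your construction of the final body contains a genuine flaw. You set $K_2=\conv(K_1'\cup\{v_{ij}\}\cup\{-v_{ij}\})$ where $K_1'$ is $K_1$ with the tangent-cone region over the rims of $C^\pm$ adjoined. But the tangent cone to $\IS(o,R)$ along the rim of a cap of half-angle $\beta$ extends, near the axis direction $e_1$, out to radius $R/\cos\beta>R$; so for $A$ small (which you need for $d_{PH}(K_0,K_2)<\varepsilon$), every $v_{ij}$, sitting at radius $r_{ij}\in(R,R+A]$ in a direction within angle $T$ of $e_1$, lies strictly \emph{inside} that cone, hence inside $K_1'$, hence in the interior of $K_2$. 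In particular your claim that $\langle v_{ij},p\rangle\leq Rr_{ij}$ for all $p\in K_1'$ is false: take $p$ near the cone's apex, where $\|p\|>R$ and $p$ is nearly parallel to $v_{ij}$. The $v_{ij}$ are therefore not boundary points of your $K_2$, let alone feet of strictly maximizing chords.

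The repair, which is what the paper does, is to take the final body to be $K_3=\conv(K_1\cup\{v_{ij}\}\cup\{-v_{ij}\})$ — the hull of the \emph{original} $K_1$ and the new points — and to use the enlarged body (your $K_1'$, the paper's $K_1\cup\hat{C}\cup(-\hat{C})$) only as an outer bound $K_1\subset K_3\subset K_1'$ certifying $d_{PH}(K_0,K_3)<\varepsilon$. Then the only points $p$ one must compare against $v_{ij}$ near it are $c\in C\subset\IS(o,R)$ (for which $\langle v_{ij},c\rangle\leq Rr_{ij}<r_{ij}^2$ holds because $\|c\|=R<r_{ij}$) and the other $v_{i'j'}$ (handled by Lemma \ref{lem:dimsp-d2-angles}). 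With this correction, the rest of your argument — the separation of lengths by $2\delta$, the limit computation, and the choice of $m$ large — goes through exactly as in the paper.
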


\begin{proof}
Choose $K_{0}\in\bds$ $\left(  x_{0},y_{0}\right)  \in\B\left(  K_{0}\right)
$ such that $\left\Vert x_{0}-y_{0}\right\Vert \in V$, and $\varepsilon>0$; we
have to prove that there exists $K\in U_{V,N}^{3/4}$ such that $d_{H}%
(K_{0},K)<\varepsilon$. By Lemma \ref{lem:spher-cap}, one can find a convex
body $K_{1}$ whose distance from $K_{0}$ is less than $\varepsilon$ and whose
boundary contains two spherical caps, symmetrical to each other with respect
to some point $o$. Let $R$ be the radius of this sphere; we may assume that
$2R\in V$. One can also assume, without loss of generality, that $o=(0,0,0)$
and that those caps are centered at equatorial points $\pm e=(\pm R,0,0)$.
Denote by $C$ the cap centered at $e$, and, for $A>0$, by $\hat{C}$ the convex
hull of $C\cup\left\{  \left(  R+2A,0,0\right)  \right\}  $. For $A$
sufficiency small $K_{2}=K_{1}\cup\hat{C}_{A}\cup\left(  -\hat{C}_{A}\right)
$ is convex and $d_{PH}\left(  K_{0},K_{2}\right)  <\varepsilon$. Let $\phi$
be a classical parametrization of the unit sphere:%
\[
\phi(\lambda,\theta)=(\cos\lambda\cos\theta,\cos\lambda\sin\theta,\sin
\lambda)\text{.}%
\]
Let $T>0$ be small enough to ensure that $\left(  R+A\right)  \phi\left(
\left[  0,T\right]  \times\left[  0,T\right]  \right)  $ is included in the
interior of $\hat{C}\setminus K_{1}$. For any positive integer $m$, and any
$(i,j)\in\mathbb{N}_{m^{2}}^{0}\times\mathbb{N}_{m}^{0}$, define
\begin{align*}
\delta &  =\frac{RT^{2}}{16m^{4}}\text{,}\\
r_{ij}  &  =R+A-\left(  jm^{2}+i\right)  \delta\text{,}\\
v_{ij}  &  =r_{ij}\phi\left(  \frac{iT}{m^{2}},\frac{jT}{m}\right)  \text{.}%
\end{align*}
For $m$ large enough, all the $v_{ij}$ lie in $\hat{C}$ and $V\overset
{\mathrm{def}}{=}\conv\left\{  v_{ij}\right\}  _{\substack{i\in\mathbb{N}%
_{m^{2}}-1\\j\in\mathbb{N}_{m}-1}}$ does not intersect $K_{1}$. Let $K_{3}$ be
the convex hull of $V$ and $K_{1}$. Since $K_{1}\subset K_{3}\subset K_{2}$,
$d_{PH}\left(  K_{0},K_{3}\right)  <\varepsilon$. By Lemma
\ref{lem:dimsp-d2-angles}, for $m$ large enough%
\[
\left\langle v_{ij},v_{ij}-v_{i^{\prime}j^{\prime}}\right\rangle >0\text{.}%
\]
Moreover, for $c\in C$%
\[
\left\langle v_{ij},v_{ij}-c\right\rangle >0
\]
because $\left\Vert v_{ij}\right\Vert =r_{ij}>R=\left\Vert c\right\Vert $.
Thus for any point $p\neq v_{ij}$ in%
\[
G\overset{\mathrm{def}}{=}\hat{C}\cap K_{3}=\conv\left(  C\cup V\right)
\text{,}%
\]
the angle $\measuredangle ov_{ij}p$ is less than $\pi/2$. It follows that
$v_{ij}\in\bd K_{3}$ and that $\left(  v_{ij},-v_{ij}\right)  $ are maximizing
chords of $K_{3}$.

For $m$ large enough, all the lengths of those chords belong to $V$, whence
\[
P_{\delta}(\lgt(\MS(K)\cap V)\geq m^{3}%
\]
and
\[
\frac{\ln P_{\delta}(\lgt(\MS(K)\cap V)}{-\ln(\delta/2)}=\frac{3\ln m}{4\ln
m-\ln\frac{RT^{2}}{32}}\underset{m\rightarrow\infty}{\rightarrow}\frac{3}%
{4}\text{,}%
\]
whence $K_{3}$ belongs to $U_{V,N}^{3/4}$ if $m$ is large enough. This ends
the proof.
\end{proof}

\begin{proof}
[Proof of Theorem \ref{thm:UPD-values}]By Lemmas \ref{lem:dim-diam} and
\ref{lem:Holder}, $\overline{\dim}_{B}\left(  K\right)  \leq1/2$ for $d=1$.
Clearly this dimension is upper bounded by $1$ in any case. So we just have to
prove that $\dim_{P}\left(  \sp(K)\right)  \geq d^{\ast}\overset{\mathrm{def}%
}{=}\min\left(  1,\frac{1+d}{4}\right)  $.

For $N\geq1$ and $V\in\mathcal{V}$ define
\[
\Omega_{V,N}\eqd\set(:K\in\bds|%
\begin{array}
[c]{l}%
\exists\delta\in\left]  0,\frac{1}{N}\right[  \text{s.t.}\frac{\ln P_{\delta
}(\sp(K)\cap V)}{-\ln\delta}>d^{\ast}-\frac{1}{N}\\
\text{or}\\
\sp(K)\cap V=\emptyset
\end{array}
:)\text{.}%
\]
If, for a fixed $V\in\mathcal{V}$, $K$ lies in infinitely many $\Omega_{V,N}$
then $\overline{\dim}_{B}\left(  K\cap V\right)  \geq d^{\ast}$ whenever
$\sp(K)\cap V$ is not empty. It follows by Lemma \ref{lem:dimP-Homo} that
$\dim_{P}\BF\left(  K\right)  =d^{\ast}$, for any $K$ liying in the
intersection of all $\Omega_{V,N}$, $V\in\mathcal{V}$, $N>1$. Thus we just
have to check the density of the interior of $\Omega_{V,N}$ in $\bds$.

If $K_{0}\in U_{V,N}^{d^{\ast}}$, then there exit $\delta<\frac{1}{N}$ and
$M\overset{\mathrm{def}}{=}1+\left\lceil \left(  \frac{\delta}{2}\right)
^{-d^{\ast}+1/N}\right\rceil $ double normals $b_{1}$, \ldots, $b_{M}$ whose
lengths form a $\delta$-set. Hence, by Lemma \ref{lem:stab}, for $K$ close
enough to $K_{0}$, there exist $M$ double normals of $K$ whose lengths form a
$\delta/2$-set, thus $P_{\delta/2}\left(  K\right)  \geq M$ and $K\in
\Omega_{V,N}$. It follows that $U_{V,N}^{d^{\ast}}$ is included in the
interior of $\Omega_{V,N}$.

By Lemmas \ref{lem:densU-d1}, \ref{lem:densU-d2} and \ref{lem:densU-d3},
$U_{V,N}^{d^{\ast}}$ is dense in $\bds\setminus W_{V}$, whence, $U_{V,N}%
^{d^{\ast}}\cup\inn W_{V}$ is dense in $\bds$ and included in $\Omega_{V,N}$.
\end{proof}

\section{Critical points\label{sec:KoCP}}

As Gruber showed in \cite{Grub-B}, a typical convex body $K$ is not
$\mathcal{C}^{2}$. It follows that the usual classification of critical points
of $\lgt_{K}$ according to the Hessian does not work. However, one can
distinguish local maxima, local minima, and other critical points. Since the
curvature (and so the Hessian) is typically undefined, it is unclear weather
those other critical points look like saddles.

The first proposition is obvious and its proof is left to the reader.

\begin{prop}
For a strictly convex body $K\in\bds$, $\lgt _{K}$ has no local minimum.
\end{prop}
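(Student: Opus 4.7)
The plan is to suppose some $(x,y)\in\C(K)$ with $x\neq y$ is a local minimum of $\lgt_{K}$ and construct chords $(x',y')\in\C(K)$ arbitrarily close to $(x,y)$ but of strictly smaller length, contradicting the assumption. Set $d=\|x-y\|$ and $\hat u=(y-x)/d$.

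First, I would reduce to the case where $(x,y)$ is a double normal. If the hyperplane $H_x$ through $x$ orthogonal to $\hat u$ is not a supporting hyperplane of $K$, then a tangent-cone argument at $x$ (relying on the fact that the tangent cone $T_K(x)$ is a proper closed convex cone containing $\hat u$ together with some direction whose inner product with $\hat u$ is negative) produces a tangent direction $\tau$ to $\bd K$ at $x$ with $\langle\tau,\hat u\rangle>0$; following $\bd K$ a short distance from $x$ in direction $\tau$ then yields boundary points $x''_n\to x$ with $\|x''_n-y\|<d$, contradicting the local-minimum assumption. The symmetric argument handles the foot $y$, so we may assume $H_x$ and $H_y$ are parallel supporting hyperplanes of $K$ orthogonal to $\hat u$, with $K$ sandwiched between them.

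Now the core construction. Strict convexity makes every face of $K$ a singleton (any nontrivial face, being a segment on $\bd K$, would violate strict convexity), so $H_x\cap K=\{x\}$ and $H_y\cap K=\{y\}$. Choose any unit vector $v$ perpendicular to $\hat u$ (possible because $\dim\E\geq 2$). For small $\varepsilon>0$, the point $x+\varepsilon v$ lies in $H_x\setminus\{x\}$ and is therefore outside $K$, so the ray issued from $x+\varepsilon v$ in direction $\hat u$ first meets $K$ at a point
\[x'=x+\varepsilon v+\delta_x\hat u\in\bd K,\qquad\delta_x>0;\]
symmetrically one obtains $y'=y+\varepsilon v-\delta_y\hat u\in\bd K$ with $\delta_y>0$. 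These rays really do enter $K$ because the translated midpoint $(x+y)/2+\varepsilon v$ belongs to $\inn K$ for $\varepsilon$ small. A direct computation gives $x'-y'=(\delta_x+\delta_y-d)\,\hat u$, hence $\|x'-y'\|=d-\delta_x-\delta_y<d$ as soon as $\varepsilon$ is small enough that $\delta_x+\delta_y<d$, while $\delta_x,\delta_y\to 0$ and thus $(x',y')\to(x,y)$ as $\varepsilon\to 0^{+}$.

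The main obstacle is really the reduction to the double-normal case without any smoothness of $\bd K$: one must argue cleanly that failure of the perpendicular hyperplane to support $K$ at $x$ genuinely yields boundary points of $K$ inside $\Bo(y,d)$ arbitrarily close to $x$, not merely interior points of $K$. This is why one works with the tangent cone rather than a tangent hyperplane. Once the reduction is made, the parallel-chord construction above is transparent and uses strict convexity only through the singleton-faces property.
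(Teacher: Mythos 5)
The paper declares this proposition obvious and omits the proof, so there is no argument of the authors' to compare against; I can only assess correctness. Your two-step plan — reduce to the double-normal case, then exhibit a strictly shorter parallel chord nearby — is correct, and the parallel-chord construction is cleanly executed: strict convexity forces $H_x\cap K=\{x\}$, so $x+\varepsilon v$ leaves $K$; the chord of $K$ through $m+\varepsilon v$ in direction $\hat u$ has endpoints $x+\varepsilon v+\delta_x\hat u$, $y+\varepsilon v-\delta_y\hat u$ with $\delta_x,\delta_y>0$, hence length $d-\delta_x-\delta_y<d$, and $\delta_x,\delta_y\to 0$ so the chord tends to $(x,y)$.

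The reduction step is stated a little loosely: "a tangent direction $\tau$ to $\bd K$ at $x$" and "following $\bd K$ a short distance in direction $\tau$" need interpretation when $\bd K$ is not smooth, and the claim that such a $\tau$ with $\langle\tau,\hat u\rangle>0$ exists in $\bd T_K(x)$ is not entirely immediate from the stated facts about $T_K(x)$. This is easy to firm up without invoking tangent cones at all: write $\bd K$ near $x$ as a graph $s\mapsto x+s-f(s)n$ over a supporting hyperplane at $x$ with outer unit normal $n$, where $f\geq 0$ is convex with $f(0)=0$. Since $y\in K$ we have $\langle\hat u,n\rangle\leq 0$, and if $H_x$ is not supporting then $\hat u\neq -n$, so $s_0\eqd\hat u-\langle\hat u,n\rangle n\neq 0$. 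Setting $x''_t=x+ts_0-f(ts_0)n$ for small $t>0$ yields boundary points with
\[
\langle x''_t-x,\hat u\rangle = t\,\langle s_0,\hat u\rangle - f(ts_0)\langle n,\hat u\rangle \geq t\bigl(1-\langle\hat u,n\rangle^2\bigr)>0,
\]
while $\|x''_t-x\|=O(t)$, so $\|x''_t-y\|<d$ for $t$ small, producing the desired shorter chord $(x''_t,y)$ near $(x,y)$. With that precision your proof is complete. One last observation: the proposition as literally stated is false, since every degenerate chord $(x,x)$ is a global (hence local) minimum of $\lgt_K$; your implicit restriction to $x\neq y$ is clearly the intended reading, consistent with the paper's treatment of double normals as positive-length critical points in Lemma \ref{lem:bin-crit}.
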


Local maxima are not very numerous either.

\begin{prop}
\label{prp:countable}For most convex bodies $K\in\bds$, the set $\M(K)$ is at
most countable.
\end{prop}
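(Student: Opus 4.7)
The plan is to split the statement into two pieces: first, show that $\MS(K)$ is at most countable for every $K \in \bds$ (no Baire category needed); second, invoke Corollary~\ref{cor:MMS} to identify $\M(K)$ with $\MS(K)$ for most $K$.

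For the first piece I would use the classical ``isolated strict maxima are countable'' argument. The space $\C(K) = \bd K \times \bd K$ is compact metric, hence second countable; fix a countable base $\mathcal{B}$ of its topology. By the very definition of strictly maximizing chord, each $b \in \MS(K)$ admits an open neighborhood $U_b \subset \C(K)$ on which $\lgt_K$ attains a strict maximum at $b$, i.e.\ $\lgt_K(c) < \lgt_K(b)$ for all $c \in U_b \setminus \{b\}$. Choose any $B_b \in \mathcal{B}$ with $b \in B_b \subset U_b$ and consider the map $b \mapsto B_b$. If distinct chords $b, b' \in \MS(K)$ were assigned the same basis element $B$, then $b \in B \subset U_{b'}$ (and $b \ne b'$) would give $\lgt_K(b) < \lgt_K(b')$, while symmetrically $b' \in B \subset U_b$ would give $\lgt_K(b') < \lgt_K(b)$, a contradiction. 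Hence the map is injective and $\MS(K)$ injects into $\mathcal{B}$.

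For the second piece, Corollary~\ref{cor:MMS} asserts that for most $K \in \bds$, $\M(K) = \MS(K)$. Combining this with the first step yields $\M(K) = \MS(K) \hookrightarrow \mathcal{B}$, so $\M(K)$ is at most countable for most $K$, which is the desired conclusion.

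I do not foresee a real obstacle: the countability of $\MS(K)$ is a soft topological fact valid for every convex body, and the identification of $\M$ with $\MS$ in the typical case has already been established. The only care to take is in verifying the injectivity of $b \mapsto B_b$, which as explained above is immediate from the definition of strict local maximum.
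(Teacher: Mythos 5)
Your proof is correct, but it is organized differently from the paper's. You factor the argument into two clean pieces: the purely topological fact that $\MS(K)$ is at most countable for \emph{every} convex body (the classical Lindel\"of-type argument that strict local maxima of any real function on a second-countable space are countable), followed by Corollary~\ref{cor:MMS} to identify $\M(K)$ with $\MS(K)$ for most $K$. The paper instead works with $\M(K)$ directly: it invokes Theorem~\ref{thm:inj} to get typical injectivity of $\widetilde{\lgt}_K$ on $\Bno(K)$, then chooses a countable base $\mathcal{W}_K$ of $\C(K)\setminus\{(x,x)\}$ with the extra property that no basis element contains both $(x,y)$ and $(y,x)$, and maps each basis element on which $\lgt_K$ attains its supremum to that (unique, by injectivity) maximizer, obtaining a countable set surjecting onto $\M(K)$. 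Since Corollary~\ref{cor:MMS} is itself a consequence of Theorem~\ref{thm:inj}, both routes ultimately rest on the same key fact, but your decomposition is more modular: it isolates a general topological lemma valid for all $K$, and confines the Baire-category input to a single cited corollary, avoiding the slightly fussy condition on the base $\mathcal{W}_K$ that the paper needs to rule out a chord and its reverse lying in the same basis element.
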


\begin{proof}
By Theorem \ref{thm:inj}, for most $K\in\bds$, $\widetilde{\lgt}%
_{K}:\Bno(K)\rightarrow\R$ is injective. Let $\mathcal{W}_{K}$ be a countable
base of open sets of $\C\left(  K\right)  \setminus\set(:(x,x)|x\in\bd K:)$
such that $\left(  x,y\right)  \in V\in\mathcal{W}_{K}$ implies $\left(
y,x\right)  \notin V$. Let $\mathcal{W}_{K}^{\prime}\subset\mathcal{W}_{K}$ be
the subset of those $V$ such that $\lgt_{K}|V$ admits a maximum, which is
necessarily unique by the injectivity of $\widetilde{\lgt}_{K}$. Then the map
$\mathcal{W}_{K}^{\prime}\rightarrow\M\left(  K\right)  $ mapping $V$ to this
maximum is surjective and the proof is complete.
\end{proof}

However, we have the following proposition.

\begin{prop}
\label{prp:dense}For most $K\in\bds$, $\M^{S}\left(  K\right)  $ is dense in
$\B(K)$.
\end{prop}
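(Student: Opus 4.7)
The plan is to present the set of $K\in\bds$ for which $\MS(K)$ is dense in $\B(K)$ as an explicit dense $G_\delta$ subset of $\bds$. Fix a countable dense subset $Z\subset\E^2$. For each $u\in Z$ and each integer $n\geq1$, set
\[
V_{n,u}\eqd\set(:K\in\bds|\B(K)\cap\Bc(u,1/(2n))=\emptyset:),\qquad U_{n,u}\eqd\set(:K\in\bds|\MS(K)\cap\Bo(u,1/n)\neq\emptyset:).
\]
Lemma \ref{lem:USC} makes $V_{n,u}$ open, Lemma \ref{lem:stab} makes $U_{n,u}$ open, and a short argument using the density of $Z$ shows that any $K$ lying in the countable intersection $\bigcap_{n,u}(V_{n,u}\cup U_{n,u})$ has $\MS(K)$ dense in $\B(K)$. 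By Baire's theorem, it therefore suffices to prove that each $V_{n,u}\cup U_{n,u}$ is dense in $\bds$.

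Fix $K_0\in\bds\setminus V_{n,u}$ and $\varepsilon>0$, and pick $b=(x,y)\in\B(K_0)\cap\Bc(u,1/(2n))$. Apply Lemma \ref{lem:spher-cap} to $b$, choosing the sphere radius $R$ just slightly larger than $\|x-y\|/2$ so that the two poles $p^+$ and $p^-=2o-p^+$ of the resulting caps are as close as we wish to $x$ and $y$ respectively. This produces a body $K_1$ with $d_{PH}(K_0,K_1)<\varepsilon$ such that $(p^+,p^-)$ is a double normal of $K_1$ (the tangent hyperplanes of $\IS(o,R)$ at $p^\pm$ are supporting, and perpendicular to the chord) and such that $(p^+,p^-)\in\Bo(u,1/n)$ by the triangle inequality. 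Since a neighbourhood of $p^\pm$ in $\bd K_1$ coincides with the sphere and hence contains no line segment, Lemma \ref{lem:make-max-strict} furnishes $K_2$ arbitrarily close to $K_1$ with $(p^+,p^-)\in\MS(K_2)$; thus $K_2\in U_{n,u}$ lies within $\varepsilon$ of $K_0$, as required.

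The main obstacle is precisely the need for this final pinching: after the spherical-cap surgery, \emph{every} antipodal pair lying in the two caps realises the same length $2R$, so $(p^+,p^-)$ is a non-strict maximum of $\lgt_{K_1}$ and Lemma \ref{lem:stab} — which only propagates strictly maximizing chords — cannot be invoked on $K_1$ directly. Breaking this antipodal degeneracy via Lemma \ref{lem:make-max-strict} is what allows $U_{n,u}$ (and not just its $\M$-analogue) to be reached. Once this is done, the remaining ingredients — openness of $V_{n,u}$ and $U_{n,u}$, the translation between density in $\B(K)$ and membership in the intersection, and the Baire conclusion — are routine.
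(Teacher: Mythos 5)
There is a genuine gap in the claimed openness of $U_{n,u}$. You define $U_{n,u}=\set(:K\in\bds|\MS(K)\cap\Bo(u,1/n)\neq\emptyset:)$ and assert that Lemma \ref{lem:stab} makes this set open. But Lemma \ref{lem:stab} has an asymmetry you overlook: starting from a \emph{strictly} maximizing chord $b\in\MS(K)$, it only produces, for nearby $K'$, a chord $b'$ close to $b$ that belongs to $\M(K')$ — not to $\MS(K')$. You yourself notice this asymmetry when explaining why you need Lemma \ref{lem:make-max-strict} in the density step, but it bites a second time, and fatally, in the openness step: a small perturbation of $K_2$ may turn the strict local maximum $(p^+,p^-)$ into a non-strict one, so $K_2$ need not have a whole neighbourhood inside $U_{n,u}$. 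Consequently $\bigcap_{n,u}(V_{n,u}\cup U_{n,u})$ is an intersection of dense sets which are not known to be open (nor $G_\delta$), and Baire's theorem does not apply.

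The paper resolves exactly this by first passing to the residual subspace $\bds^{S}$ of bodies with $\M(K)=\MS(K)$ (Corollary \ref{cor:MMS}), where the distinction between $\M$ and $\MS$ disappears; there the analogue of $U_{n,u}$ defined with $\M$ \emph{is} open by Lemma \ref{lem:stab}, and Lemma \ref{lem:Z} transports the residuality back to $\bds$. Your argument can be repaired the same way: replace $\MS$ by $\M$ in the definition of $U_{n,u}$ (openness then holds), run your density construction unchanged, conclude that $\M(K)$ is dense in $\B(K)$ for most $K$, and intersect with the residual set on which $\M=\MS$. Your geometric density construction (spherical caps plus Lemma \ref{lem:make-max-strict}) is correct and is a valid alternative to the paper's polytope construction (attaching two outward points and invoking Lemma \ref{lem:strict-max-criterion}), but without the passage through $\bds^{S}$ or an equivalent device the Baire argument does not close.
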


\begin{proof}
Let $\bds^{S}$ be the set of convex bodies $K$ such that $\MS(K)=\M(K)$. By
Corollary \ref{cor:MMS}, $\bds^{S}$ is residual in $\bds$, so by Lemma
\ref{lem:Z}, it is sufficient to prove the conclusion for most $K\in\bds^{S}$.
Let $\mathcal{U}^{2}$ be a countable basis of open sets of $\E^{2}$. For
$U\in\mathcal{U}^{2}$, define%
\begin{align*}
\Phi_{U}  &  \overset{\mathrm{def}}{=}\set(:K\in\bds^{S}|\B\left(  K\right)
\cap\overline{U}=\emptyset:)\\
\Psi_{U}  &  \overset{\mathrm{def}}{=}\set(:K\in\bds^{S}|\M\left(  K\right)
\cap U\neq\emptyset:)\text{.}%
\end{align*}

Those sets are open in $\bds^{S}$ by Lemma \ref{lem:USC} and Lemma
\ref{lem:stab} respectively. If $K$ belongs to the $G_{\delta}$-set
$\bigcap_{U\in\mathcal{U}^{2}}\left(  \Phi_{U}\cup\Psi_{U}\right)  $, then
$\M\left(  K\right)  $ is dense in $\B\left(  K\right)  $. Hence, it is
sufficient to prove that $\Psi_{U}\cup\Phi_{U}$ is dense in $\bds^{S}$. Choose
$K_{0}\in\bds^{S}$ and a neighbourhood $\mathcal{O}$ of $K_{0}$ in $\bds^{S}$.
We have to find $K_{3}\in\left(  \Phi_{U}\cup\Psi_{U}\right)  \cap\mathcal{O}%
$. First we chose a polytope $K_{1}\in\mathcal{O}$ (By corollary
\ref{cor:polyMMS}, polytopes all belong to $\bds^{S}$). If $K_{1}\in\Phi_{U}$
put $K_{3}=K_{1}$ and the proof is finished; otherwise there exists a double
normal of $K_{1}$ liying in $\overline{U}$. In this case, one can sightly
dilate and move $K_{1}$ in order to obtain another polytope $K_{2}%
\in\mathcal{O}$ admitting a double normal $\left(  x,y\right)  \in U$. For
$\eta>0$, define $x^{\prime}\overset{\mathrm{def}}{=}x+\eta\left(  x-y\right)
$, $y^{\prime}\overset{\mathrm{def}}{=}y+\eta\left(  y-x\right)  $ and
$K_{3}\overset{\mathrm{def}}{=}\conv\left(  K_{2}\cup\left\{  x^{\prime
},y^{\prime}\right\}  \right)  $. By Lemma \ref{lem:strict-max-criterion},
$\left(  x^{\prime},y^{\prime}\right)  \in\M\left(  K\right)  $. If $\eta$ is
small enough, then $K_{3}$ still belongs to $\mathcal{O}$ and $\left(
x^{\prime},y^{\prime}\right)  \in U$, whence $K_{3}\in\Psi_{U}\cap\mathcal{O}$.
\end{proof}

\begin{rem}
\label{rmk:index}In the case $d=1$, if $K$ is $\mathcal{C}^{2}$ the Hessian of
$\lgt_{K}$ at $b=(x,y)\in\B(K)$ is given by
\[
\left(
\begin{array}
[c]{cc}%
\frac{1}{w}-\gamma_{x} & \frac{1}{w}\\
\frac{1}{w} & \frac{1}{w}-\gamma_{y}%
\end{array}
\right)  \text{,}%
\]
where $\gamma_{u}$ is the curvature of $\bd K$ at $u=x,y$ and $w=\left\Vert
x-y\right\Vert $. Hence the Hessian degenerates when
\[
\frac{1}{\gamma_{x}}+\frac{1}{\gamma_{y}}=w\text{.}%
\]
So, the index of a double normal seen as a critical point appears to be
closely related to the curvature of $\bd K$ at its feet. This contributes to
the motivation for the following section. See also \cite{KoC}, \cite{Koz},
\cite{Koz-d}.
\end{rem}

\section{Curvature at feet of double normals\label{sec:curv-feet}}

This section brings some light on the curvature aspect of most convex
surfaces, at the endpoints of their double normals.

For distinct $x,y\in\E$, let $C_{xy}=\IS\left(  x,\left\Vert x-y\right\Vert
\right)  $ be the sphere of centre $x$ passing through $y$.


On a $\mathcal{C}^{1}$ convex body $K$, the unit sphere of the tangent space
to $\bd K$ at $x\in K$ is denoted by $\TUS_{x}K$. Consider a smooth, strictly
convex body $K$, a point $x$ on its boundary $\bd K$, and a tangent direction
$\tau\in\TUS_{x}K$. Take the 2-dimensional half-plane $H$ whose boundary line
$N$ is along the normal at $x$, such that $x+\tau\in H$. Then, for any point
$z\in H\cap\bd K$, there is exactly one circle with its centre on $N$ and
containing both $x$ and $z$. Let $r_{z}$ be the radius of this circle. Then
$\lcr\tau(x)=\liminf_{z\rightarrow x}r_{z}$ is called the \emph{lower
curvature radius} at $x$ in direction $\tau$. Analogously is defined the
\emph{upper curvature radius} $\ucr\tau(x)$. Also, $\lc\tau(x)=\ucr\tau
(x)^{-1}$ and $\uc\tau(x)=\lcr\tau(x)^{-1}$ are the \emph{lower} and
\emph{upper curvature} at $x$ in direction $\tau$. (See \cite{B}, p. 14.)

\begin{lem}
\label{lem:curv-max-chord} For any maximizing chord $c$ of a convex body, we
have
\[
\lc\tau(x)\geq\lgt(c)^{-1}%
\]
at each foot $x$ of $c$, and in each tangent direction $\tau$ at $x$.
\end{lem}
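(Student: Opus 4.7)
The plan is to exploit the local-maximum property of $c=(x,y)$ to trap $\bd K$ inside the ball of radius $d \eqd \lgt(c)$ centered at $y$, locally around $x$, and then read off the curvature bound as a plane-geometric calculation in the half-plane $H$ defining $\ucr\tau(x)$.

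First, I would observe that, since $c$ is a local maximum of $\lgt_K$ on $\C(K)=\bd K\times\bd K$, holding the second coordinate fixed at $y$ gives $\|x'-y\|\le d$ for every $x'\in\bd K$ sufficiently close to $x$. So $\bd K$ is locally contained in $\Bc(y,d)$, and in particular $\bd K\cap H\subset\Bc(y,d)\cap H$. Since $c$ is a double normal (Lemma \ref{lem:locmax}), the normal line $N$ at $x$ is $\overline{xy}$, so $y\in N\cap H$; in particular the disk $\Bc(y,d)\cap H$ is tangent to the tangent line to $\bd K$ at $x$, from the interior side.

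Second, I would fix orthonormal coordinates on $H$ with $x$ at the origin, $\tau=(1,0)$, and $y=(0,-d)$, so that $\bd K\cap H$ lies locally in the lower half-plane. For $z=(a,b)\in\bd K\cap H$ near $x$, the inclusion $\|z-y\|\le d$ rewrites as $a^2+b^2+2bd\le 0$, i.e., $-b\ge(a^2+b^2)/(2d)$. On the other hand, the unique circle through $x$ and $z$ centered on $N$ has center $(0,-r_z)$ with $r_z=(a^2+b^2)/(-2b)$. Combining these yields $r_z\le d$. Taking the limit superior as $z\to x$ in $\bd K\cap H$ gives $\ucr\tau(x)\le d=\lgt(c)$, whence
\[
\lc\tau(x)=\ucr\tau(x)^{-1}\ge\lgt(c)^{-1},
\]
which is the claim. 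The same argument applies symmetrically at the foot $y$.

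The main obstacle is conceptual rather than computational: one must recognize that although the maximizing condition is a priori a joint constraint on pairs of boundary points, freezing the opposite foot reduces it to a one-sided containment $\bd K\subset \Bc(y,d)$ near $x$, which is precisely the classical geometric hypothesis forcing the osculating radii at $x$ to be bounded by $d$. Once this reduction is made, the verification is a single quadratic rearrangement.
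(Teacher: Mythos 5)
Your proof is correct and is essentially the paper's argument spelled out in full. The paper proves the contrapositive very tersely — if $\lc\tau(x)<\lgt(c)^{-1}$ then there are boundary points arbitrarily near $x$ whose distance to $x^\ast$ exceeds $\lgt(c)$, contradicting maximality — and your containment $\bd K\cap H\subset\Bc(x^\ast,\lgt(c))$ together with the quadratic rearrangement $r_z\le d$ is exactly the geometric step that the paper leaves implicit in that ``obviously.''
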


\begin{proof}
Proof. Let $c=xx^{\ast}$, and assume
\[
\lc\tau(x)<\lgt(c)^{-1}\text{;}%
\]
then there exists a sequence of points $\{x_{n}\}_{n=1}^{\infty}$ converging
to $x$, such that
\[
\Vert x_{n}-x^{\ast}\Vert>\lgt(c)\text{.}%
\]
But this obviously contradicts the hypothesis asking for $c$ to be maximizing.
\end{proof}

\begin{thm}
\label{thm:most-max-chord}For most convex bodies $K$ and any maximizing chord
$c$ of $K$,
\[
\lc\tau(x)\geq\lgt(c)^{-1}\ \ \rmand\ \ \uc\tau(x)=\infty
\]
at each foot $x$ of $c$, and in each tangent direction $\tau$ at $x$.
\end{thm}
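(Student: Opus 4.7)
The plan is to observe that the theorem is essentially a combination of two results already available in the excerpt: Lemma \ref{lem:curv-max-chord}, which gives the lower-curvature bound for \emph{every} convex body (no typicality needed), and Theorem \ref{qthm:TZ}, which provides the dichotomy $\lc\tau(x)=0$ or $\uc\tau(x)=\infty$ for most $K$ at every $x\in\bd K$ and every $\tau\in\TUS_x K$.

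First I would record that, by Lemma \ref{lem:curv-max-chord}, for \emph{every} $K\in\bds$, every maximizing chord $c$ of $K$, every foot $x$ of $c$ and every tangent direction $\tau$ at $x$, one has $\lc\tau(x)\geq\lgt(c)^{-1}>0$; this yields the first half of the conclusion with no appeal to Baire category.

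Next I would fix the residual set $\mathcal{R}\subset\bds$ supplied by Theorem \ref{qthm:TZ}: for $K\in\mathcal{R}$, at every boundary point $x$ and in every tangent direction $\tau$, either $\lc\tau(x)=0$ or $\uc\tau(x)=\infty$. Combining this with the previous step, for any such $K$ and any maximizing chord $c$ of $K$, the inequality $\lc\tau(x)\geq\lgt(c)^{-1}>0$ rules out the first alternative at each foot $x$ of $c$ and in each tangent direction $\tau$, so the second alternative $\uc\tau(x)=\infty$ must hold. Since $\mathcal{R}$ is residual, this completes the proof.

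There is no real obstacle: the argument is a one-step synthesis. The only thing worth double-checking is that Theorem \ref{qthm:TZ} is stated with the universal quantifier \emph{``for any tangent direction $\tau$''} (it is, on inspection of the statement), so that the dichotomy is available simultaneously at every foot and every direction we need, and that the strict positivity $\lgt(c)^{-1}>0$ (ensured because $c$ is a chord of positive length) is what forces the exclusion.
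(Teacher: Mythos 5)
Your proof is correct and takes essentially the same approach as the paper: invoke Lemma \ref{lem:curv-max-chord} to get $\lc\tau(x)\geq\lgt(c)^{-1}>0$ for every convex body, then use the dichotomy of Theorem \ref{qthm:TZ} to rule out $\lc\tau(x)=0$ and conclude $\uc\tau(x)=\infty$. The paper additionally invokes Theorem \ref{qthm:Klee-Gruber} to justify that most bodies are smooth, so that tangent directions are well defined, but this is a minor preliminary point.
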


\begin{proof}
By Theorem \ref{qthm:Klee-Gruber}, most convex bodies\ are smooth; so, one can
speak of tangent directions at boundary points. By Theorem \ref{qthm:TZ}, for
most convex body $K$ and any point $x\in\bd K$, we have
\[
\lc\tau(x)=0\ \ \rmor\ \ \uc\tau(x)=\infty
\]
in each tangent direction $\tau$.

Since, by Lemma \ref{lem:curv-max-chord}, we have
\[
\lc\tau(x)\neq0
\]
for every endpoint $x$ of a maximizing chord, and every tangent direction
$\tau$, the theorem follows.
\end{proof}

A chord $c$ which is longest among all chords of $C\in\bds$ is called a
\emph{metric diameter} of $C$. The next result strengthens Theorem
\ref{thm:most-max-chord} in the case of the metric diameter and improves
Theorem 11 in \cite{Zamfi-diam}.

\begin{thm}
\label{thm:metric-diam}Most convex bodies\ admit a single metric diameter
$c$,
\[
\lc\tau(x)=\lgt(c)^{-1}\ \ \rmand\ \ \uc\tau(x)=\infty
\]
at each foot $x$ of $c$, and in each tangent direction $\tau$ at $x$.
\end{thm}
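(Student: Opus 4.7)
The plan is to combine the uniqueness and curvature results already proved in the paper with a dedicated Baire argument for the upper bound on $\lc\tau(x)$. Uniqueness of the metric diameter follows from Theorem \ref{thm:inj}: a metric diameter is (by Lemma \ref{lem:locmax}) a double normal of maximal length, so two distinct non-oriented metric diameters would both attain $\max\sp(K)$, contradicting the injectivity of $\widetilde{\lgt}_K$. The assertion $\uc\tau(x)=\infty$ at each foot $x$ is immediate from Theorem \ref{thm:most-max-chord}, since $c$ is a maximizing chord. The bound $\lc\tau(x)\geq\lgt(c)^{-1}$ is Lemma \ref{lem:curv-max-chord}. What remains is the matching upper bound $\lc\tau(x)\leq\lgt(c)^{-1}$, equivalently $\ucr\tau(x)\geq\lgt(c)$.

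Let $x^{*}$ be the opposite foot of $c$. The global maximality of $\lgt(c)$ gives $K\subset\Bc(x^{*},\lgt(c))$, and in the plane $H$ spanned by $\tau$ and $c$ each secant circle through $x$ and a nearby boundary point $z\in\bd K\cap H$, with centre on the normal $\overline{xx^{*}}$, has radius
\[
r_z = \frac{\|z-x\|^2}{2\,d_N(z)},
\]
where $d_N(z)$ is the distance from $z$ to the tangent hyperplane at $x$ (positive because $z$ and $x^{*}$ lie on the same side). Rewriting the containment $\|z-x^{*}\|\leq\lgt(c)$ gives $\|z-x\|^{2}\leq2\lgt(c)d_N(z)$, so $r_z\leq\lgt(c)$ holds automatically, with equality exactly when $z\in\IS(x^{*},\lgt(c))$. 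Hence $\ucr\tau(x)\leq\lgt(c)$ for every $K$, and the remaining task is to exhibit, for most $K$ and every tangent direction $\tau$, a sequence $z_k\to x$ in $\bd K\cap H$ with $r_{z_k}\to\lgt(c)$.

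I would prove this by a Baire argument paralleling Lemma \ref{lem:perfect} and Theorem \ref{thm:UPD-bin}. Fix a countable dense family of tangent directions $\tau_j$ and positive integers $n,m$, and let $\mathcal{B}_{n,m,j}$ be the set of $K\in\bds$ admitting a metric-diameter foot $x$ at which $r_z\leq\lgt(c)-1/n$ for every $z\in\bd K\cap H_{\tau_j}$ with $0<\|z-x\|<1/m$. Lemma \ref{lem:USC} together with continuity of the metric diameter and the uniform bound $r_z\leq\lgt(c)$ show that $\mathcal{B}_{n,m,j}$ is closed. To see it has empty interior, approximate $K\in\mathcal{B}_{n,m,j}$ by $K'\eqd\conv(K\cup P)$, where $P$ is a tiny convex cap placed in $H_{\tau_j}$, contained in $\Bc(x^{*},\lgt(c))$ and hugging the sphere $\IS(x^{*},\lgt(c))$ close to $x$. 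Since $P\subset\Bc(x^{*},\lgt(c))$, no chord of $K'$ exceeds $\lgt(c)$, so the metric diameter survives; but $P$ contributes a new boundary point $z\in H_{\tau_j}$ arbitrarily near the sphere, hence with $r_z>\lgt(c)-1/n$. Intersecting countably many such conditions with the residual sets from Theorems \ref{thm:inj} and \ref{thm:most-max-chord} via Lemma \ref{lem:Z} yields the conclusion. The main obstacle is implementing the perturbation cleanly: one must place $P$ so that it produces boundary points of $K'$ precisely in the chosen sectional plane (since the tangent hyperplane of $K'$ at the new foot may shift slightly), and one must avoid destroying the other typical properties of $K$; both are variations on constructions already used in Section \ref{sec:Cantor} and should not introduce fundamentally new difficulties.
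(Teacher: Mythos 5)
Your reductions are sound, and two of them are improvements on the paper's own exposition: obtaining uniqueness of the metric diameter from Theorem~\ref{thm:inj} is a self-contained alternative to the paper's citation of \cite{Zamfi-diam}, and reading $\uc\tau(x)=\infty$ directly off Theorem~\ref{thm:most-max-chord} is shorter than the paper's re-derivation from Theorem~\ref{qthm:TZ} at the end. The observation $r_z\leq\lgt(c)$ from $K\subset\Bc(x^{*},\lgt(c))$ is correct (it is essentially a re-proof of Lemma~\ref{lem:curv-max-chord}), and your Baire skeleton (closed families indexed by $n$, a perturbation to escape them, a countable intersection) is the same as the paper's $(n)$-property argument.

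The gap is in the perturbation step, and it is genuine. By placing the cap $P$ in a single sectional plane $H_{\tau_j}$, your $K'$ only acquires boundary points near $\IS(x^{*},\lgt(c))$ in that one plane. After intersecting over all $j$ you would get $\ucr{\tau_j}(x)=\lgt(c)$ for a countable dense family of directions, but not for every tangent direction $\tau$: the map $\tau\mapsto\ucr\tau(x)$ is a $\limsup$ along a section and is not lower semicontinuous in $\tau$ in general, so equality on a dense set of directions does not propagate. There is a second, related obstruction: the relevant directions are those orthogonal to the $K$-dependent segment $xx^{*}$, so a family $\tau_j$ fixed in ambient space cannot serve as tangent directions uniformly over $K$. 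The paper avoids both problems by making the perturbation azimuthally symmetric about $xx^{*}$: it attaches $\conv$ of a tiny $(d-1)$-sphere $S$ lying in a hyperplane orthogonal to $xx^{*}$, centred on $xx^{*}$, sandwiched between $\IS(x_{n}^{*},\diam C-1/n)$ and $\IS(x^{*},\diam C)$. This puts new boundary points close to the outer sphere in every half-plane through $xx^{*}$ simultaneously, so a single closed set per $n$ (with the quantifier over $\tau$ absorbed into its definition as ``there exists $\tau$ with $A_n(\tau)\cap\inn C=\emptyset$'') becomes nowhere dense, and the conclusion $\ucr\tau(x)=\diam C$ follows for all $\tau$ at once. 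Replacing your planar cap by such a rotationally symmetric attachment, and restricting beforehand (as the paper does) to the residual subspace of bodies with a unique non-horizontal diameter so that the diameter varies continuously and closedness is actually verifiable, repairs the argument.
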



\nc{\mcH}{\mathcal{K}^{\prime}}

\begin{proof}
A direction or a line-segment or a hyperplane will be called
\textit{horizontal}, respectively \textit{vertical}, if it is parallel,
respectively orthogonal, to a fixed hyperplane.

By Theorem 11 in \cite{Zamfi-diam}, most convex bodies have a single metric
diameter. As the set of all convex bodies having a horizontal diameter is
obviously nowhere dense, the space $\mcH$ of all convex bodies with a single
non-horizontal diameter is residual in $\bds$, and we apply Lemma \ref{lem:Z}
to obtain generic results in $\bds$, working in $\mcH$.

Let $xx^{\ast}$ be the metric diameter of $C\in\mcH$, such that $x$ is above
and $x^{\ast}$ below any horizontal hyperplane cutting $xx^{\ast}$, and let
the direction $\tau$ be orthogonal to $\overline{xx^{\ast}}$. Take the points
$x_{n}^{\ast}\in xx^{\ast}$ such that $\Vert x^{\ast}-x_{n}^{\ast}\Vert
=1/n$\ \ $(n=1,2,3,...)$, and consider the half-plane $\Pi$ with $xx^{\ast}$
on its relative boundary and $x+\tau\in\Pi.$

Let $A_{n}(\tau)\subset\Pi$ be the arc starting at $x$, of length $1/n$, of
the circle of centre $x_{n}^{\ast}$ passing through $x$. The radius is
$\diam
C-1/n$.

Let us say that $C\in\mcH$ has the $(n)$-property if for its metric diameter
$xx^{\ast}$ and for some direction $\tau$ orthogonal to $xx^{\ast}$,
$A_{n}(\tau)$ does not meet $\inn C$.

We prove that the set $\mcH_{n}$ of those $C\in\mcH$ which enjoy the
$(n)$-property is nowhere dense in $\mcH$.

First, it is easily seen that each $\mcH_{n}$ is closed in $\mcH$. Then, let
$C\in\mcH$. Approximate it by a polytope $P$ having as metric diameter
$xx^{\ast}$. Choose $\varepsilon>0$ very small (compared with $1/n$). Consider
the $(d-1)$-sphere $S$ with $\left\langle S\right\rangle $ orthogonal to
$xx^{\ast}$, having its centre on $xx^{\ast}$, lying between $C_{x_{n}^{\ast
}x}$ and $C_{x^{\ast}x}$, and satisfiying $\diam S=\varepsilon$. Let the
polytope $P^{\prime\prime}$ approximate $\conv S$ in $\left\langle
S\right\rangle $, with $d_{PH}\left(  P^{\prime\prime},S\right)  $ much
smaller than $\varepsilon$.

Then $P^{\prime}=\conv(P\cup P^{\prime\prime})$ has not the $(n)$-property,
whence $\mcH_{n}$ is nowhere dense. In conclusion, most $C\in\mcH$ have the
$(n)$-property for no natural number $n$. This means that for every tangent
direction $\tau$ at $x$,%
\[
\ucr\tau(x)>\diam C-1/n
\]
for infinitely many $n$'s, yielding $\ucr\tau(x)=\diam C.$

Analogously, $\ucr\tau(x^{\ast})=\diam C.$

By Theorem 1 in [8], for most $C\in\bds$, at every point $z\in\bd C$ and for
every direction $\tau$ at $z$, $\lcr\tau(z)=0$ or $\ucr\tau(z)=\infty.$ It
follows that, at the endpoints $x,x^{\ast}$ of the unique metric diameter and
for any direction $\tau$, $\lcr\tau(x)=\lcr\tau(x^{\ast})=0$.
\end{proof}

The above theorems describe the curvature at the feet of maximizing chords.
However, as shown by Proposition \ref{prp:countable}, maximizing chords are
rare among double normals. Concerning typical double normals we have the
following result.

\begin{thm}
For most $K\in\bds$ and most $x\in\BF(K)$, in any tangent direction $\tau$,
$\uc\tau(x)=\infty$.
\end{thm}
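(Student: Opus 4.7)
The plan is to exhibit, for typical $K$, the set
$$G(K) := \{x \in \BF(K) : \uc\tau(x) = \infty \text{ for every } \tau \in \TUS_x K\}$$
as a dense $G_\delta$ in the compact Cantor set $\BF(K)$ (which it is, for typical $K$, by Theorem \ref{thm:cantor}); this makes $G(K)$ residual in the Baire space $\BF(K)$, which is the desired conclusion. Writing the condition $\uc\tau(x) = \infty$ as $\lcr\tau(x) = 0$, we have $G(K) = \bigcap_{n \geq 1} V_n(K)$ with
$$V_n(K) := \{x \in \BF(K) : \lcr\tau(x) < 1/n \text{ for every } \tau \in \TUS_x K\}.$$
It then suffices to show that each $V_n(K)$ is open and dense in $\BF(K)$.

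For openness, consider the complement $\mathcal{A}_n := \BF(K) \setminus V_n(K)$, which is the projection to $\BF(K)$ of $E_n := \{(x, \tau) : \tau \in \TUS_x K,\ \lcr\tau(x) \geq 1/n\}$. The condition $\lcr\tau(x) \geq 1/n$ is geometrically equivalent to the 2-dimensional closed disk of radius $1/n$ lying in the plane spanned by the inward normal at $x$ and $\tau$, tangent internally to $\bd K$ at $x$, being contained in $K$; this is preserved under joint limits $(x_k, \tau_k) \to (x, \tau)$, using that for typical $K$ the boundary is $\mathcal{C}^1$ (Theorem \ref{qthm:Klee-Gruber}), so the unit tangent bundle over $\BF(K)$ varies continuously. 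Thus $E_n$ is closed. Since $\BF(K)$ is compact (Lemma \ref{lem:USC}) and the fibers $\TUS_x K$ are compact, the projection is proper, whence $\mathcal{A}_n$ is closed and $V_n(K)$ is open.

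For density, I would invoke the earlier results on maximizing chords. By Corollary \ref{cor:MMS} and Proposition \ref{prp:dense}, for typical $K$, $\M(K) = \MS(K)$ is dense in $\B(K)$. Composing with the continuous surjective projection $\B(K) \to \BF(K)$, $(x, y) \mapsto x$, shows that the set of feet of maximizing chords is dense in $\BF(K)$. By Theorem \ref{thm:most-max-chord}, at every such foot $x$ and every tangent direction $\tau$, $\uc\tau(x) = \infty$, that is $\lcr\tau(x) = 0 < 1/n$. Hence the feet of maximizing chords are contained in $V_n(K)$, and $V_n(K)$, containing a dense subset, is dense in $\BF(K)$. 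The Baire category theorem then makes $G(K) = \bigcap_n V_n(K)$ residual, and any $x \in G(K)$ satisfies $\uc\tau(x) = \infty$ for every $\tau$.

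The main technical point is the closedness assertion in the openness step: one must carefully justify the persistence of $\lcr\tau(x) \geq 1/n$ under joint limits, which relies on the $\mathcal{C}^1$ structure of $\bd K$ for typical $K$ and the proper behaviour of the projection from the unit tangent bundle restricted to $\BF(K)$. Everything else is an assembly of results already proved in the paper.
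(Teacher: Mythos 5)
Your overall strategy --- express the good set $G(K)$ as a countable intersection of open dense subsets of $\BF(K)$, with density coming from feet of maximizing chords via Proposition \ref{prp:dense} and Theorem \ref{thm:most-max-chord} --- is exactly the paper's, and the density step is correct. The gap is in the openness step, which you rightly flag as the technical crux but then argue incorrectly. Your geometric characterization is wrong: $\lcr\tau(x)\geq 1/n$ is \emph{not} equivalent to the closed disk of radius $1/n$, tangent internally at $x$ in the plane of the inward normal and $\tau$, being contained in $K$. Disk containment does imply $\lcr\tau(x)\geq 1/n$, but $\lcr\tau(x)$ is a $\liminf$ as $z\to x$ and hence a purely local quantity, whereas disk containment is global; a thin lens bounded near $x$ by a circular arc of radius $R\geq 1/n$ has $\lcr\tau(x)=R\geq 1/n$, yet a disk of radius $1/n$ tangent at $x$ can poke through the far side of $K$. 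More seriously, the underlying claim that $E_n=\{(x,\tau):\lcr\tau(x)\geq 1/n\}$ is closed is false in general: a $\liminf$ is not upper semicontinuous, and one can build a $\mathcal{C}^1$ strictly convex body whose boundary is locally very flat near a sequence $x_k\to x$ (so $\lcr\tau(x_k)$ is large) yet bends sharply between consecutive $x_k$, forcing $\lcr\tau(x)=0$. So your $V_n(K)$ need not be open, and the decomposition $G(K)=\bigcap_n V_n(K)$ does not by itself exhibit $G(K)$ as a $G_\delta$.

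A correct $G_\delta$ decomposition has to control the scale in the $\liminf$ as well as the threshold --- for instance one may take $W_k(K)=\{x\in\BF(K):\forall\tau\in\TUS_{x}K,\ \exists z,\ 0<\|z-x\|<1/k,\ r_z<1/k\}$ with $z$ ranging over $\bd K$ in the appropriate normal half-plane; now the inner condition on $z$ is open in $(x,\tau)$ jointly, so the complement of $W_k$ is a proper projection of a closed set, hence closed, and $\bigcap_k W_k=G(K)$. The paper does not reconstruct such an argument from scratch: it invokes ``a closer look at the proof in'' \cite{Zamfi-curv II} to conclude that $\mathcal{I}=\{x\in\bd K:\forall\tau,\ \uc\tau(x)=\infty\}$ is a $G_\delta$ in $\bd K$, then intersects with $\BF(K)$ to get a $G_\delta$ in $\BF(K)$. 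Your skeleton and your density argument are identical to the paper's; the flaw is entirely in the justification that $V_n(K)$ is open.
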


\begin{proof}
Rephrasing the second point of Theorem \ref{qthm:TZ}, we get that for most
$K\in\bds$ the set%
\[
\mathcal{I}=\set(:x\in\BF(K)|\forall\tau\in\TUS_{x}K,\uc\tau(x)=\infty:)
\]
contains a dense $G_{\delta}$ set in $\bd K$. A closer look at the proof in
the original paper \cite{Zamfi-curv II} shows that indeed $\mathcal{I}$
\emph{is} a $G_{\delta}$ set. Thus $\mathcal{I}\cap\BF(K)$ is a $G_{\delta}$
set in $\BF(K)$, which contains, by Theorem \ref{thm:most-max-chord}, all the
feet of maximizing chords of $K$. Now, by Proposition \ref{prp:dense}, the set
of those feet is dense in $\BF(K)$, whence the conclusion.
\end{proof}

\begin{rem}
\label{rmk:cfdn}We still ignore, for typical convex bodies, the behaviour of
the lower curvature at the feet of (most) double normals. The existence of
double normals with finite upper curvature at a foot is also unknown; however,
these curvatures cannot be finite at both feet of the same double normal (see
Theorem 4.1 in \cite{AZ1}).
\end{rem}

Let us consider a typical convex body among those that admit a given
line-segment as double normal.

\begin{thm}
\label{thm:fixed-d-normal}For most convex bodies\ admitting the double normal
$c$,
\[
\lc\tau(x)=0\ \ \rmand\ \ \uc\tau(x)=\infty
\]
at each foot $x$ of $c$, and in each tangent direction $\tau$ at $x$.
\end{thm}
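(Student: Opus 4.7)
The plan is to work in the closed, hence Baire, subspace $\bds_c \eqd \{K \in \bds : c \in \B(K)\}$ of $\bds$ (closedness being immediate from Lemma~\ref{lem:USC}). For each integer $n \geq 1$, I aim to exhibit open dense subsets $V_n^+$ and $V_n^-$ of $\bds_c$ such that every $K \in V_n^+$ (resp.\ $V_n^-$) carries, near each foot $z \in \{x,y\}$, a continuous family of ``flat'' (resp.\ ``sharp'') witnesses $\tau \mapsto w(\tau) \in H_\tau \cap \bd K$ with $\|w(\tau) - z\|$ arbitrarily small and $r_{w(\tau)} > n$ (resp.\ $< 1/n$). By Baire's theorem, $\bigcap_n (V_n^+ \cap V_n^-)$ is then residual in $\bds_c$, and on it $\ucr\tau(z) = \infty$ and $\lcr\tau(z) = 0$ at each foot in every direction, which is equivalent to the claim.

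For density of $V_n^+$, given $K_0 \in \bds_c$ and $\varepsilon > 0$, I would take the ``flattening'' approximation
\[
K_1 = \conv\bigl(K_0 \cup P_x \cup P_y\bigr), \qquad P_z = \{z + w + h\nu_z : h \geq \|w\|^2/(2R),\ \|w\| \leq \rho_0\},
\]
where $\nu_z$ is the inward unit normal at $z$, $R$ is chosen large ($R > n$ and large enough, depending on $K_0$, for each $P_z$ to lie locally below $\bd K_0$), and $\rho_0$ is small so that $d_{PH}(K_0, K_1) < \varepsilon$. An application of Jensen's inequality to the convex function $w \mapsto \|w\|^2/(2R)$ shows the lower boundary of $K_1$ near each foot coincides with the paraboloid of curvature radius $R$; every tangent direction $\tau$ then yields continuous witnesses $w(\tau) = z + \rho\tau + (\rho^2/(2R))\nu_z$ with $r_{w(\tau)} = R + O(\rho^2) > n$. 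Density of $V_n^-$ follows symmetrically from the ``sharpening'' $K_1' = K_0 \cap E_x \cap E_y$ with $E_z = \{z + w + h\nu_z : h \geq \|w\|^2/(2R')\}$ and $R' < 1/n$: the intersection trims a thin cap near each foot and produces witnesses with $r_{w(\tau)} = R' + O(\rho^2) < 1/n$.

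Openness would be proved by observing that the continuous witness families from these rotationally symmetric constructions persist under small Hausdorff perturbations: for $K$ close to $K_1$ (resp.\ $K_1'$), each $w(\tau)$ can be replaced by a point of $\bd K \cap H_\tau$ at distance $O(d_{PH})$, with $r$-value perturbed by the same order, and the compactness of the tangent unit sphere $\TUS_z K$ makes this replacement uniform in $\tau$. The main obstacle is precisely this uniformity: although the one-direction conditions $\ucr\tau(z) > n$ and $\lcr\tau(z) < 1/n$ are naturally $G_\delta$ in $\bds_c$, the uncountable quantifier over $\tau$ demands both the rotational symmetry of the approximations around $\overline{xy}$ and a uniform continuity argument on $\TUS_z K$ that survives arbitrary convex-body perturbations.
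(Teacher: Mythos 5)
Your Baire-category skeleton (residual set in the Baire space $\bds_c$ of bodies admitting $c$ as a double normal) is the same as the paper's, but your two approximation constructions both have genuine problems, the first of which is fatal as written.

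The ``sharpening'' $K_1' = K_0 \cap E_x \cap E_y$ with $E_z = \{z + w + h\nu_z : h \geq \|w\|^2/(2R')\}$ does \emph{not} give $d_{PH}(K_0,K_1') < \varepsilon$ when $R' < 1/n$ is small: the cross-section of $E_x$ at height $h$ is a ball of radius $\sqrt{2R'h}$, so for $R' \ll 1$ the set $K_0 \cap E_x$ is a thin needle of transverse radius $O(\sqrt{R'\,\diam K_0})$ along $c$, at Hausdorff distance roughly $\diam K_0$ from $K_0$. You need the pinching to be local (apex sharp, then flaring out), and for that the natural object is a cone of half-angle close to $\pi/2$, not a global paraboloid; this is exactly what the paper does, by approximating $C$ with a polytope $P$ having cone vertices at $x$ and $x^\ast$ with no horizontal tangent direction there, which forces $\lcr\tau(x) = 0$ in every direction while keeping $P$ arbitrarily close to $C$. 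Your ``flattening'' $\conv(K_0 \cup P_x \cup P_y)$ has a milder but real issue: if $\bd K_0$ has a facet through $x$ (or is merely flatter than the paraboloid), $P_x$ sits inside $K_0$ and the convex hull changes nothing; you must first pass to an auxiliary body that is strictly more curved than the $R$-paraboloid near the feet.

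The other structural difference is your handling of the ``for all $\tau$'' quantifier. You attempt to make $V_n^\pm$ open sets defined by a universally quantified curvature estimate and yourself flag the resulting uniform-continuity argument as the main obstacle. The paper avoids it entirely: it defines the $(n)$-property as an \emph{existential} statement over $\tau$ (``some tangent direction has $\ucr\tau(x)\leq n$ or $\lcr\tau(x)\geq 1/n$'', encoded via the arcs $A_n(\tau)$ and $A_n'(\tau)$), which makes the bad set $\mcD_n$ closed by a routine compactness argument, and then shows $\mcD_n$ has empty interior by the polytope construction; the complement being a dense $G_\delta$ is then automatic. Finally, the paper first establishes (by adapting Klee--Gruber) that most $C\in\mcD$ are $\mathcal{C}^1$; without this the phrase ``each tangent direction $\tau$ at $x$'' has no meaning, and your proposal does not address it.
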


\nc{\mcD}{\mathcal{K}^{\prime\prime}}

Theorem \ref{thm:fixed-d-normal} shows that the curvature behaviour at the
endpoints of $c$ coincides with the curvature behavior at most points. (See
\cite{Zamfi-curv II} for the latter result in $\mathcal{K}$; the result is
also valid in the space $\mcD$ defined below, and the proof parallels that for
$\mathcal{K}$.)

\begin{proof}
Let $\mcD$ be the Baire space of all convex bodies\ admitting $c$ as a double
normal. We may assume that $c=xx^{\ast}$ is vertical, with $x$ above $x^{\ast
}$.

Following the same steps as in the proofs of Klee \cite{Klee-B} or Gruber
\cite{Grub-B}, one can show that most $C\in\mcD$ are smooth (boundary of class
$\mathcal{C}^{1}$). This justifies the use of \textquotedblleft tangent
directions\textquotedblright\ at $x$.

Let the direction $\tau$ be orthogonal to $\overline{xx^{\ast}}$. Consider the
points $x_{n}\in\overline{xx^{\ast}}$, $x_{n}^{\prime}\in xx^{\ast}$, such
that $x\notin x^{\ast}x_{n}$ and $\Vert x-x_{n}\Vert=\Vert x-x_{n}^{\prime
}\Vert^{-1}=n.$ Take the half-plane $\Pi$ with $xx^{\ast}$ on its boundary and
$x+\tau\in\Pi.$

Let $A_{n}(\tau)\subset\Pi$, $A_{n}^{\prime}(\tau)\subset\Pi$ be the arcs
starting in $x$, of length $1/n$, of the circle of centre $x_{n}$,
respectively $x_{n}^{\prime}$, passing through $x$. The radii are $n$ and
$1/n$, respectively.

We now say that $C\in\mcD$ has the $(n)$-property if, for some horizontal direction
$\tau$, $A_{n}(\tau)\cap\inn C=\emptyset$ or
$A_{n}^{\prime}(\tau)\subset C.$

We prove that the set $\mcD_{n}$ of those $C\in\mcD$ which enjoy the
$(n)$-property is nowhere dense in $\mcD$.

Again, it is easily checked that each $\mcD_{n}$ is closed in $\mcD$.
Approximate $C\in\mcD$ by a polytope $P$ with vertices $x$, $x^{\ast}$ such
that $\bd P$ has no horizontal direction at $x$ or $x^{\ast}$. We now use the
polytope $P^{\prime}$ constructed in the proof of Theorem
\ref{thm:metric-diam}. This polytope has not the $(n)$-property, whence
$\mcD_{n}$ is nowhere dense. Hence, most $C\in\mcD$ have the $(n)$-property
for no natural number $n$. Thus, for every tangent direction $\tau$ at $x$,%
\[
\ucr\tau(x)>n\ \ \rmand\ \ \lcr\tau(x)<1/n
\]
for infinitely many $n$'s, i.e. $\ucr\tau(x)=\infty$ and $\lcr\tau(x)=0$.

Analogously, $\ucr\tau(x^{\ast})=\infty$ and $\lcr\tau(x^{\ast})=0$.
\end{proof}

\begin{ack}
A. Rivi\`{e}re and J. Rouyer thankfully acknowledge T. Zamfirescu's hospitality.

J. Rouyer thankfully acknowledges financial support from the\emph{ Centre
Francophone de Math\'{e}matique \`{a} l'IMAR}.

C. V\^\i lcu  acknowledges partial financial support from the grant of the Ministery of Research and Innovation, CNCSUEFISCDI,
project no. PN-III-P4-ID-PCE-2016-0019.

T. Zamfirescu thankfully acknowledges financial support by the High-end
Foreign Experts Recruitment Program of People's Republic of China.
\end{ack}

\medskip

Alain Rivi\`ere

\noindent{\footnotesize Laboratoire Ami\'enois de Math\'ematiques
Fondamentales et Appliqu\'ees\newline CNRS, UMR 7352\newline Facult\'e de
Sciences d'Amiens\newline80 039 Amiens Cedex 1, France.}

{\small \hfill Alain.Riviere@u-picardie.fr}

\medskip

Jo\"el Rouyer

\noindent{\footnotesize Simion Stoilow Institute of Mathematics of the
Roumanian Academy\newline Bucharest, Roumania }

{\small \hfill Joel.Rouyer@ymail.com}

\medskip

Costin V\^\i lcu

\noindent{\footnotesize Simion Stoilow Institute of Mathematics of the
Roumanian Academy\newline Bucharest, Roumania}

{\small \hfill Costin.Vilcu@imar.ro}

\medskip

Tudor Zamfirescu

\noindent{\footnotesize Fachbereich Mathematik, Universit\"at Dortmund\newline%
44221 Dortmund, Germany\newline and\newline Simion Stoilow Institute of
Mathematics of the Roumanian Academy\newline Bucharest, Roumania\newline
and\newline College of Mathematics and Information Science,\newline Hebei
Normal University,\newline050024 Shijiazhuang, P.R. China.}

{\small \hfill tudor.zamfirescu@mathematik.tu-dortmund.de}

\end{document}